\numberwithin{equation}{section}
\newtheorem{prop}{Proposition}
\newtheorem{lemma}[prop]{Lemma}
\newtheorem{thm}[prop]{Theorem}
\newtheorem{cor}[prop]{Corollary}
\numberwithin{prop}{section}
\theoremstyle{definition}
\newtheorem{defn}[prop]{Definition}
\newcommand{\del}{\partial}
\newcommand{\dt}{\frac{\partial}{\partial t}}
\newcommand{\brs}[1]{\left| #1 \right|}
\newcommand{\gG}{\Gamma}
\newcommand{\gd}{\delta}
\newcommand{\gs}{\sigma}
\newcommand{\gl}{\lambda}
\newcommand{\gw}{\omega}
\newcommand{\ga}{\alpha}
\newcommand{\gb}{\beta}
\newcommand{\N}{\nabla}
\newcommand{\nab}{\nabla}
\newcommand{\FF}{\mathcal F}
\renewcommand{\bar}[1]{\overline{#1}}
\newcommand{\lbr}{\left(}
\newcommand{\rbr}{\right)}
\newcommand{\hsp}{\hspace{0.5cm}}
\newcommand{\lap}{\Delta}
\newcommand{\la}{\lambda}
\newcommand{\ten}{\otimes}
\definecolor{grn}{rgb}{0,0.4,0}
\newcommand{\bRn}{\mathbb{R}^n}
\newcommand{\hook}{\mathbin{\hbox{\vrule height2.4pt width4.5pt depth-2pt
\vrule height5pt width0.4pt depth-2pt}}}
\DeclareMathOperator{\Rm}{Rm}
\DeclareMathOperator{\End}{End}
\begin{document}

\title[Entropy, Stability, and Yang-Mills flow]{Entropy, Stability, and
Yang-Mills flow}

\begin{abstract} Following \cite{CM}, we define a notion of entropy for
connections over $\mathbb R^n$ which has shrinking Yang-Mills solitons as
critical points.  As in \cite{CM}, this entropy is defined implicitly, making it
difficult to work with analytically.  We prove a theorem characterizing entropy
stability in terms of the spectrum of a certain linear operator associated to
the soliton.  This leads furthermore to a gap theorem for solitons.  These
results point to a
broader strategy of studying ``generic singularities" of Yang-Mills flow, and we
discuss the differences in this strategy in dimension $n=4$ versus $n \geq 5$. 
\end{abstract}

\date{\today}

\author{Casey Kelleher}
\email{\href{mailto:clkelleh@uci.edu}{clkelleh@uci.edu},
\href{mailto:jstreets@uci.edu}{jstreets@uci.edu}}
\author{Jeffrey Streets}

\address{Rowland Hall\\
         University of California\\
         Irvine, CA 92617}

\thanks{The first author was supported by an NSF Graduate Research Fellowship
DGE-1321846. 
The second author was partly supported by the National Science
Foundation DMS-1341836 and an Alfred P. Sloan Fellowship.}

\maketitle

\section{Introduction}

In \cite{CM} Colding and Minicozzi introduced a strategy for understanding the
mean curvature flow based on a notion of entropy-stability of singularities. 
Broadly speaking, the goal is to showing that all singularities other than
cylinders and spheres are unstable and hence can be perturbed away, leading to
the construction of ``generic" mean curvature flows.  In this paper we initiate
a
similar strategy for understanding the Yang-Mills flow.  We first recall some
basic setup and fundamental results about this flow.  In \cite{Rade} Rade showed
the smooth long time existence and convergence of Yang-Mills flow in dimensions
$n=2,3$.  Next, in \cite{Struwe}, Struwe gave a criterion for singularity
formation in dimension $n=4$ in terms of energy concentration, although it is to
date an open problem whether or not such energy concentration occurs.  Explicit
finite time singularities of Yang-Mills flow in dimensions $5 \leq n \leq 9$
were constructed in \cite{Gastel}.  Following \cite{CM}, we seek to 
investigate what a ``stable,'' or ``generic,'' singularity of Yang-Mills flow
looks like.  Similar constructions for the harmonic map flow were considered in
\cite{Zhang}.

In \cite{Ham2},
Hamilton defined an entropy functional for Yang-Mills flow akin to Huisken's
monotonicity formula for mean curvature flow \cite{Huis}, and Struwe's
monotonicity for harmonic maps \cite{Struwe2}, which is monotone against special
background manifolds (with an easily controlled decay rate in general).  In
\cite{Weinkove} Weinkove used this monotonicity formula to establish that type I
singularities of Yang-Mills flow admit blowup limits which are shrinking soliton
solutions of Yang-Mills flow (Definition \ref{def:soliton}).  At this point,
one may ask whether or not \emph{all} singularities of Yang-Mills flow admit
shrinking soliton blowup limits.  The corresponding statement for mean curvature
flow was established in work of Ilmanen, White \cite{Ilmanen, White}.  Because
of this it is reasonable to
initiate an in-depth study of mean curvature flow shrinkers to define the notion
of a stable singularity.  Despite the fact that it is unknown if all Yang-Mills
flow singularities in dimension $n \geq 5$ can be described by shrinkers,
we will nonetheless use them as our models to define a notion of stable
singularity.

In particular, we draw inspiration from Colding-Minicozzi's approach to
Huisken's monotonicity formula directly and
explicitly include a base point in the definition of Hamilton's Yang-Mills
entropy.  In particular, we will set
\begin{align*}
 \mathcal F_{x_0,t_0}(\N) = t_0^2 (4 \pi t_0)^{-\frac{n}{2}} \int_{\mathbb R^n}
\brs{F_{\N}}^2 e^{-\frac{\brs{x - x_0}^2}{4 t_0}} dV
\end{align*}
This functionals have the key property that their critical points are
self-shrinking solutions to Yang-Mills flow.  Moreover, the the entropy takes
the supremum of $\mathcal F$ over points in spacetime, specifically
\begin{align*}
 \gl(\N) = \sup_{x_0,t_0} \FF_{x_0,t_0}(\N).
\end{align*}
This quantity has many invariance properties, and is moreover monotone along a
solution to Yang-Mills flow (Proposition \ref{solitonmonotonicity}).  However,
it does not depend smoothly on $\N$, and thus is difficult to work with
analytically.  Nonetheless, we are able to relate stability with respect to $\N$
to the more calculationally tractable stability coming from the
$\FF$-functionals (see Definition \ref{stabdefn}).  This stability comes from a
more traditional second variation analysis for the functional.

In analyzing the $\FF$-functionals, in parallel to
\cite{CM}, we observe that two negative eigenforms for this second variation are
always present, corresponding to the Yang-Mills flow direction and also
translation in space.  Taking these explicitly into account yields an analytic
characterization of the formal definition of the $\FF$-stability of a shrinking
soliton (see Definition \ref{stabdefn}, Theorem \ref{stabthm}).  Also, as a
consequence of this analysis of the second variation, we establish a basic gap
theorem for solitons.

\begin{thm} \label{gapthm} A soliton satisfying $\brs{F_{\N}} \leq \frac{3}{8}$
is flat.
\end{thm}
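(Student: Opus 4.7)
The plan is to combine the shrinking soliton equation---namely the Euler-Lagrange equation of $\FF_{0,1}$, of the schematic form $D^*_\N F_\N = \pm \tfrac{1}{2} i_X F_\N$ with $X$ the Euclidean position vector field---with the flat-space Yang-Mills Bochner formula to derive a differential inequality for $\brs{F_\N}^2$ involving the drift Laplacian $\mathcal L := \Delta - \tfrac{1}{2} \N_X$. This particular drift Laplacian is the natural one associated to a shrinker because it is formally self-adjoint with respect to the Gaussian weight $e^{-\brs{x}^2/4}$, which will allow one to run a global weighted integration-by-parts argument and to bypass the need to locate a maximum point of $\brs{F_\N}$ on the noncompact space $\mathbb R^n$.

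Concretely, differentiating the soliton equation and applying the Weitzenb\"ock identity---in flat coordinates, $\Delta_D F_\N = \N_\N^* \N_\N F_\N + \mathcal R(F_\N)$, where $\mathcal R$ is the purely algebraic Bochner term associated to the curvature commutator---together with the Bianchi identity $D_\N F_\N = 0$ and the fact $\N X = \Id$ on $\mathbb R^n$, and then pairing with $F_\N$, should produce a pointwise identity of the schematic form
\begin{equation*}
\tfrac{1}{2} \mathcal L \brs{F_\N}^2 = \brs{\N_\N F_\N}^2 + a \brs{F_\N}^2 + \IP{\mathcal R(F_\N), F_\N},
\end{equation*}
with $a$ an explicit constant of definite sign determined by the commutator $[D, i_X]$.

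The heart of the argument is then a sharp pointwise algebraic inequality on the bundle $\Lambda^2 T^*\mathbb R^n \otimes \mathfrak g$ of the form
\begin{equation*}
\brs{\IP{\mathcal R(F), F}} \leq \tfrac{8}{3} \brs{F}^3,
\end{equation*}
where the constant $\tfrac{8}{3}$ is the reciprocal of the threshold in the theorem. This is the step I anticipate being the main technical obstacle: obtaining the \emph{sharp} constant, rather than some crude universal bound, requires a careful orbit analysis for the simultaneous $\so(n)$ and adjoint $G$-action on $\Lambda^2 T^*\mathbb R^n \otimes \mathfrak g$, and is in the spirit of the classical algebraic gap estimates of Bourguignon-Lawson.

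Given these two ingredients the conclusion follows by a standard weighted argument. Under the hypothesis $\brs{F_\N} \leq \tfrac{3}{8}$, the pointwise identity and the algebraic inequality combine to give a definite sign for $\mathcal L \brs{F_\N}^2$; integrating against $e^{-\brs{x}^2/4} dV$, with respect to which $\mathcal L$ is self-adjoint and for which integration by parts is justified by the polynomial growth of $F_\N$ and $\N_\N F_\N$ on a soliton (absorbed by the Gaussian weight), forces pointwise equality almost everywhere. Continuity then forces $\brs{F_\N}$ to be either identically $0$ or identically $\tfrac{3}{8}$; the latter case would force $\brs{\N_\N F_\N} \equiv 0$ from the identity above, which is incompatible with the explicit $X$-dependence of the soliton equation on $\mathbb R^n$. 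Hence $F_\N \equiv 0$ and $\N$ is flat.
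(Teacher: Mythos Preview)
Your approach is natural but differs from the paper's in a way that creates a genuine gap: you work with $F$ directly, while the paper works with $D^*F$. This matters because the threshold $\tfrac{3}{8}$ in the theorem is \emph{not} the reciprocal of a sharp algebraic constant for the Bochner term on $2$-forms; it comes from a completely different source.

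Here is what the paper actually does. Integrating by parts against the Gaussian and using that $D^*D^*F = 0$, one gets
\[
\int_{\bRn}\brs{\N D^*F}^2 G\,dV = \int_{\bRn}\left\langle D^*F,\ L(D^*F) + 2[F,D^*F]^{\#} + \tfrac{x}{2}\hook\N D^*F - \tfrac{x}{2}\hook DD^*F\right\rangle G\,dV.
\]
The key input is Lemma~\ref{leigenfunc}: $D^*F$ is an eigenform of the stability operator $L$ with eigenvalue $-1$. A short computation using the soliton equation shows $\tfrac{x}{2}\hook\N D^*F - \tfrac{x}{2}\hook DD^*F = -\tfrac{1}{2}D^*F$, so the two linear terms combine to $-\tfrac{3}{2}\brs{D^*F}^2$. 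A crude Cauchy--Schwarz bound $2\langle D^*F,[D^*F,F]^{\#}\rangle \le 4\brs{F}\brs{D^*F}^2$ then yields
\[
\int_{\bRn}\brs{\N D^*F}^2 G\,dV \le \left(4\sup\brs{F}-\tfrac{3}{2}\right)\int_{\bRn}\brs{D^*F}^2 G\,dV,
\]
whence $\brs{F}\le\tfrac{3}{8}$ forces $\N D^*F\equiv 0$; since $D^*F=-\tfrac{x}{2}\hook F$ vanishes at the origin it is identically zero, and then Lemma~\ref{selfsimYM} (a Yang--Mills shrinker with bounded curvature is flat) finishes the proof. The number $\tfrac{3}{8}=\tfrac{3/2}{4}$ is thus the ratio of an eigenvalue to a crude commutator bound on $1$-forms.

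In your route, the analogous drift identity on $2$-forms reads $\mathcal L F = F + \mathfrak R(F)$ with linear coefficient $+1$, and after integration you would need $\brs{\langle\mathfrak R(F),F\rangle}\le c\brs{F}^3$ with $c\le\tfrac{8}{3}$. You correctly flag this as ``the main technical obstacle'' and do not prove it; in fact there is no reason to expect the sharp constant here to be $\tfrac{8}{3}$, since $\tfrac{3}{8}$ was never obtained that way. Your proposal would yield \emph{a} gap theorem with \emph{some} threshold, but not this one. The paper sidesteps the delicate $2$-form algebra entirely by passing to $D^*F$ and exploiting its known eigenvalue under $L$.
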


Putting this discussion together and exploiting a number of interesting
properties of the $\FF$-functionals, we are able to relate $\FF$ stability with
entropy stability.  This theorem is analogous to
(\cite{CM} Theorem 0.15).
\begin{thm} \label{entequiv} Suppose $\N \in \mathfrak{S}$ is non cylindrical
with polynomial curvature growth. If $\N$ is $\mathcal{F}$-unstable then there
is a compactly supported variation $\N_s$ such that $\N_0 = \N$ and for all $s
\neq 0$,
\begin{equation}
\la(\N_s) < \la(\N). 
\end{equation}
\end{thm}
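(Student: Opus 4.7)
The main idea is that the entropy $\lambda(\N) = \sup_{(y,\tau)} \mathcal{F}_{y,\tau}(\N)$ takes a supremum over basepoints precisely in order to quotient out the two always-negative directions of the second variation of $\mathcal{F}_{x_0,t_0}$, namely spatial translation and the Yang-Mills flow (corresponding to an infinitesimal change in $\tau$). Accordingly, $\FF$-instability, which is a strict negativity condition modulo these two directions, should upgrade to a strict decrease of $\lambda$. The plan has three ingredients: locating the spacetime maximizer of $\mathcal{F}_{\cdot,\cdot}(\N_s)$ near the original basepoint, a joint Taylor expansion in basepoint and connection, and truncation to compact support.

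First I would argue that for a non-cylindrical soliton with polynomial curvature growth, the function $(y,\tau) \mapsto \mathcal{F}_{y,\tau}(\N)$ has a unique maximum at the canonical basepoint $(x_0, t_0)$, so that $\lambda(\N) = \mathcal{F}_{x_0,t_0}(\N)$. Monotonicity of $\FF$ along Yang-Mills flow (Proposition \ref{solitonmonotonicity}), the scaling behavior of the Gaussian weight as $\tau \to 0, \infty$, and non-cylindricality together rule out a higher-dimensional family of maximizers. By continuity, for any perturbation $\N_s$ with $s$ small, any maximizer $(y(s), \tau(s))$ of $\mathcal{F}_{\cdot,\cdot}(\N_s)$ lies in a small neighborhood of $(x_0, t_0)$.

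Next I would perform a Taylor expansion in both the basepoint and the parameter $s$. Choosing the $\FF$-unstable variation $\N_s$ to be weighted-$L^2$ orthogonal to the two variations induced by infinitesimal changes in $(y, \tau)$ (legitimate since $\FF$-stability is defined modulo exactly these directions), one expands
\begin{align*}
\mathcal{F}_{y(s),\tau(s)}(\N_s) = \mathcal{F}_{x_0,t_0}(\N) + Q_1(y(s)-x_0,\tau(s)-t_0) + \tfrac{s^2}{2}\,Q_2(\N_s) + (\text{mixed and higher order}).
\end{align*}
Here $Q_1 \leq 0$ since $(x_0,t_0)$ is a maximum of $\mathcal{F}_{\cdot,\cdot}(\N)$, and $Q_2 < 0$ by $\FF$-instability. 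The linear-in-$s$ term vanishes since $\N$ is a critical point of $\mathcal{F}_{x_0,t_0}(\cdot)$, and the mixed term of order $s \cdot (y(s)-x_0, \tau(s)-t_0)$ vanishes due to the chosen orthogonality combined with the soliton equation. Hence $\lambda(\N_s) \leq \mathcal{F}_{y(s),\tau(s)}(\N_s) < \lambda(\N)$ for all sufficiently small $s \neq 0$.

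Finally, for the compactly supported conclusion I would multiply the variation by a cutoff $\chi_R$; polynomial curvature growth guarantees rapid convergence of the weighted integrals, so the truncation error in each variation of $\mathcal{F}$ is $o(1)$ as $R \to \infty$, while the gain from $\FF$-instability is a fixed positive constant. The main obstacle I anticipate is the book-keeping around the mixed cross term: one must match the abstract ``infinitesimal change of $(y,\tau)$'' with concrete connection variations (pulling back along translation and the soliton self-similarity) and invoke the soliton equation at the right moment to make this cross term vanish, so that the two negativities $Q_1$ and $Q_2$ combine without cancellation or coupling.
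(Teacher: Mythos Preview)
Your local picture is essentially the paper's step (1): the orthogonality of $\dot{\Gamma}$ to $D^*F$ and to $V\hook F$ kills the cross term in Corollary \ref{cor:2ndvarshrink}, so the Hessian of $\Xi(x_0,t_0,s):=\FF_{x_0,t_0}(\N_s)$ at $(0,1,0)$ splits as (negative definite in basepoint) plus (strictly negative in $s$), hence is negative definite. That part, which you flag as the main obstacle, is actually fine.

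The genuine gap is the sentence ``By continuity, for any perturbation $\N_s$ with $s$ small, any maximizer $(y(s),\tau(s))$ of $\mathcal{F}_{\cdot,\cdot}(\N_s)$ lies in a small neighborhood of $(x_0,t_0)$.'' The basepoint domain $\mathbb{R}^n\times(0,\infty)$ is noncompact, so a unique strict global maximum for $\N$ does \emph{not} by itself trap the supremum for $\N_s$; one must rule out, uniformly for small $s$, that $\sup_{y,\tau}\FF_{y,\tau}(\N_s)$ is approached as $|y|\to\infty$ or $\tau\to 0,\infty$. This is precisely where compact support of $\N_s-\N$ does real work, not cosmetic work: outside the support one has $|F_{\N_s}|^2=|F_{\N}|^2$, while the contribution from the fixed ball is bounded by $C_R\,t_0^{(4-n)/2}e^{-\rho^2/4t_0}$, which is small for $|x_0|$ large or $t_0$ large; a separate argument using Lemma \ref{Fdecaylemma} and a lower bound on $\partial_{t_0}\Xi$ handles $t_0\to 0$. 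The paper carries this out as its steps (4)--(5), then uses a uniform bound on $\partial_s\Xi$ over the remaining compact annulus (step (3)) together with Proposition \ref{CMLemma} (step (2)) to close the argument. So your ordering is inverted: the cutoff is not a final truncation but the mechanism that makes localization of the supremum possible at all. A minor point: Proposition \ref{solitonmonotonicity} concerns monotonicity of $\la$ in time along Yang-Mills flow, not basepoint dependence at a fixed connection; uniqueness of the basepoint maximum is Proposition \ref{CMLemma}, proved by directly differentiating $\FF_{sy,1+as^2}(\N)$.
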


We note here that the only known examples of type I blowups (\cite{Grotowski,
Naito}) and shrinking solitons (\cite{Gastel}) are in dimensions $n \geq 5$,
whose entropies we compute in \S \ref{gastelsec}.  We will also show that this
dimensional restriction is necessary.  In particular,
while type I blowup limits are shrinking solitons, we show that any shrinking
soliton on $\mathbb R^4$ is automatically flat, thus ruling out type I blowups.

\begin{prop}\label{fourdprop} Let $E \to (M^4, g)$ be a smooth vector bundle,
and suppose $\N_t$
is a solution to Yang-Mills flow on $E$.  If $T < \infty$ is the maximal
existence time of the solution, then
\begin{align*}
\lim_{t \to T} (T - t) \brs{F_{\N_t}} = \infty.
\end{align*}
Moreover, any shrinking soliton on $\mathbb R^n$, $n \leq 4$, is flat.
\end{prop}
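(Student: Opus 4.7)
The plan is to prove the rigidity statement---any shrinker on $\mathbb{R}^n$ with $n\leq 4$ is flat---first, and then to derive the type~I non-existence claim from it via a blowup argument.

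For the rigidity, let $\N$ be a shrinking soliton on $\mathbb{R}^n$; by definition $\N$ is a critical point of $\mathcal{F}_{0,1}$ under variations of $\N$. The key is to exploit the scaling symmetry of the $\mathcal{F}$-functionals. Writing $\phi_\lambda(x) = \lambda x$, a direct change of variables yields
\begin{equation*}
\mathcal{F}_{0,t_0}(\phi_\lambda^* \N) \;=\; \mathcal{F}_{0,\,\lambda^2 t_0}(\N).
\end{equation*}
The infinitesimal generator $\tfrac{d}{d\lambda}|_{\lambda=1}\phi_\lambda^*\N$ equals $\iota_x F_\N$ up to a pure gauge direction, which does not affect $\mathcal{F}$. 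Criticality of $\N$ therefore gives
\begin{equation*}
0 \;=\; \frac{d}{d\lambda}\Big|_{\lambda=1}\mathcal{F}_{0,1}(\phi_\lambda^* \N) \;=\; 2\,\frac{d}{dt_0}\Big|_{t_0=1}\mathcal{F}_{0,t_0}(\N),
\end{equation*}
and differentiating the explicit integral formula in $t_0$ directly produces the Pohozaev-type identity
\begin{equation*}
\left(2 - \tfrac{n}{2}\right) \int_{\mathbb{R}^n} |F_\N|^2\, e^{-|x|^2/4}\, dV \;+\; \tfrac{1}{4} \int_{\mathbb{R}^n} |x|^2\, |F_\N|^2\, e^{-|x|^2/4}\, dV \;=\; 0.
\end{equation*}
For $n\leq 4$ both coefficients are non-negative and the second integral is strictly positive unless $F_\N \equiv 0$, so $\N$ must be flat.

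For the type~I claim, argue by contradiction. If $\limsup_{t\to T}(T-t)\|F_{\N_t}\|_\infty < \infty$, fix a sequence $t_k \to T$ with $(T-t_k)\|F_{\N_{t_k}}\|_\infty \leq C$ and parabolically rescale by $\lambda_k = (T-t_k)^{-1/2}$ centered at spacetime points $(x_k, t_k)$ chosen to see the singularity, e.g.\ at near maxima of $|F|$. The rescaled connections have uniformly bounded curvature on parabolic neighborhoods exhausting $\mathbb{R}^4 \times (-\infty, 0)$, and Weinkove's blowup analysis produces a subsequential limit which is a shrinking soliton $\N^\infty$ on $\mathbb{R}^4$, non-trivial either by the normalization or by Struwe's $\varepsilon$-regularity in dimension four. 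The rigidity above then forces $\N^\infty$ to be flat, a contradiction.

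The main obstacle is making the first-variation computation rigorous, since the variation $\iota_x F_\N$ is not compactly supported. This can be handled either by truncation together with decay estimates that follow from the soliton equation, or bypassed entirely by pairing the shrinker equation $D^*_\N F_\N = -\tfrac{1}{2}\iota_x F_\N$ against $\iota_x F_\N$, integrating against the Gaussian weight, and applying the second Bianchi identity to recover the same Pohozaev identity.
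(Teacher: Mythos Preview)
Your proof is correct and follows the same overall strategy as the paper: establish the Pohozaev-type identity (which is the paper's Corollary \ref{5partlemma}(a)), conclude flatness of shrinkers for $n\leq 4$, and then rule out type~I singularities via Weinkove's blowup. The difference lies in how the identity is obtained. The paper proves it by a direct integration-by-parts computation (Lemma \ref{lem:derividgen} with $\xi = (x-x_0)/4t_0$), using the soliton equation and the second Bianchi identity explicitly; this is in fact the alternative route you sketch in your final paragraph. Your primary derivation, via the scaling relation $\mathcal{F}_{0,t_0}(\phi_\lambda^*\N) = \mathcal{F}_{0,\lambda^2 t_0}(\N)$ combined with criticality of the soliton under connection variations, is a more conceptual route to the same identity that makes its origin as the $t_0$-component of the first variation transparent. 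Both approaches require the same integrability hypothesis (polynomial energy growth, as in the paper) to justify the relevant manipulations, which you correctly flag.
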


For this reason in four-dimensions it is more natural to define stability using
the second variation of
the Yang-Mills
functional itself, not the entropy.  This notion was introduced in \cite{Bourg},
where the second
variation of $\mathcal{YM}$ is analyzed in depth.  Among many results in that
paper is a strong rigidity result showing that for $SU(2), SU(3)$ or
$SO(4)$-bundles over $S^4$, the only stable Yang-Mills connections are either
self-dual or antiself-dual.  However, a recent result of Waldron \cite{Waldron}
suggests that blowup limits of finite time singularities of Yang-Mills flow in
dimension $n=4$ are \emph{not} instantons, and hence should not be stable.  This
suggests that it may be possible to construct smooth long time Yang-Mills flows
``generically" for vector bundles on four-manifolds with small gauge group.

Given the results above, a number of natural questions emerge following
parallel lines of thought from mean curvature flow.  First, we note that for
solutions to mean curvature flow it was shown in \cite{Ilmanen, White} that
arbitrary
singularities admits blowup limits which are shrinking solitons.  A very natural
question is whether the result of \cite{Weinkove} can be extended to show the
analogous
statement for Yang-Mills flow.  In other words, do arbitrary singularities of
Yang-Mills flow admit shrinking soliton blowup limits?  Another basic issue is
to construct more examples of shrinking solitons.  Despite the lack of examples,
one would still like to know the answers to some simple questions.  For
instance, for a given gauge group, what
is the minimal entropy shrinker?  Also, is it possible to classify stable
shrinkers?

Here is an outline of the rest of this paper.  In \S \ref{Ffunc} we recall
Hamilton's general entropy functional and monotonicity formula for Yang-Mills
flow, as well as a specialized version of this functional on $\mathbb R^n$
adapting Huisken's entropy for mean curvature flow to Yang-Mills flow.  In \S
\ref{vars} we establish variational properties of the entropy, leading to the
some corollaries on the structure of self-shrinkers of Yang-Mills
flow.  Next, in \S \ref{stability} we make some observations about the spectrum
of the second variation of $\FF$, leading to a characterization of
$\FF$-stability and the proof of Theorem \ref{gapthm}.  In \S
\ref{ss:Ffunctnentropy} we prove Theorem \ref{entequiv}, and we conclude 
in \S \ref{gastelsec} by computing the entropy of the Gastel shrinkers.

\vskip 0.1in

\textbf{Acknowledgements:} The authors would like to thank Michael Struwe for
his comments on an earlier draft of this paper.

\section{\texorpdfstring{$\mathcal F$}{F}-functional and entropy} \label{Ffunc}
\subsection{Background}
Let $(M^n,g)$ be a smooth, compact Riemannian manifold without boundary. Given a
vector bundle $E$ over $M$, let $S(E)$ denote the sections of $E$. For each
point $p$ choose a local basis of $TM$ given by $\{ \del_i \}$ with dual
elements $\{ dx^i \}$, where $dx^i(\del_j) = \delta_j^{i}$. Additionally, choose
a local basis for $E$ given by $\{ \mu_{\ga} \}$ with dual elements $\{
\mu_{\ga}^* \}$ where $\mu_{\ga}^*(\mu_{\gb}) = \delta_{\ga}^{\gb}$.
Given a chart containing $p \in M$ the action of a connection $\nabla$ on $E$ is
given in a local basis by
\begin{align*}
\nabla \mu_{\gb} &= \gG_{i \gb}^{\gd} dx^i \otimes \mu_{\gd}.
\end{align*}
Set $\Gamma  = ( \gG_{i \gb}^{\gd} dx^i \otimes \mu_{\delta} \otimes
\mu_{\beta}^* )$ to be the \textit{connection coefficient matrix} (associated
with $\nabla$) with respect to the basis.  The set of all connections over $M$
will be denoted by $\mathcal{A}_{E}$.
The actions of  $\nabla$ are extended to $TM$ by coupling it with the unique
Levi-Civita connection of $(M,g)$, given locally via
\begin{align*}
\nabla \del_j &= \left( \gG^{LC} \right)_{ij}^{k} dx^i \otimes \del_k.
\end{align*}
The actions of $\nab$ may be extended to tensorial combinations of $T^*M$ and
$E$ as well as their dual spaces. We let $\nab^*$ denote the formal adjoint of
$\nab$ with respect to the inner product.

Let $D$ be the \textit{exterior derivative}, or skew symmetrization of $\nabla$
over the tensor products of $T^*M$. Set $\Lambda^p(E) = \Lambda^p(M) \otimes
S(E)$.
 We let $D^{(p)}$ be the covariant connection from $\Lambda^p(E)$ to
$\Lambda^{p+1}(E)$, where the $p$ index will be dropped when understood. The
curvature tensor $F_{\nab} := D^{(1)} \circ D^{(0)} : \Lambda^0(E) \rightarrow
\Lambda^2(E)$ is given in local coordinates by
\begin{equation}\label{eq: Fcoords}
F_{\nab}= \left(  \del_i \gG_{j \ga}^{\gb}  - \del_j \gG_{i \ga}^{\gb}  - \gG_{i
\ga}^{\gd} \gG_{j \gd}^{\gb} + \gG_{j \ga}^{\gd} \gG_{i \gd}^{\gb}  \right) dx^i
\wedge dx^j \otimes \mu_{\gb} \otimes \mu_{\ga}^*. 
\end{equation}
Three more operators will be particularly important to our study. We set
$D^*_{\nab} := \nab^*$, which is a rescaled version of the formal $L^2$ adjoint
of $D$, chosen for computational convenience.
The \textit{Hodge Laplacian} is given by
\[
\lap_{D_{\nab}} : \Lambda^p(E) \rightarrow \Lambda^p(E) : \omega \mapsto
(D_{\nab}^* D_{\nab} + D_{\nab} D_{\nab}^*)\omega,
\]
while the \textit{rough (Bochner) Laplacian} and is given by
\[
\lap : \Lambda^p(E) \rightarrow \Lambda^p(E) : \gw \mapsto - \nabla^* \nabla
\gw.
\]
We next discuss background material pertinent to the study of the Yang-Mills
functional and its generalizations. We define the \emph{Yang-Mills functional}
by
\begin{equation}\label{eq:YM}
\mathcal{YM}(\nab) := || F_{\nab} ||_{L^2}^2 = \int_M{| F_{\nab}|^2 dV_g}.
\end{equation}
By computing the Euler Lagrange equation of \eqref{eq:YM} one may generate the
corresponding \emph{Yang-Mills flow} defined as follows.
\begin{equation}\label{eq:YMsystem}
\frac{\del \nab_t}{\del t} = - D_{\nab_t}^* F_{\nab_t}.
\end{equation}
Let $J$ and $K$ be multiindices of lengths $p_J-1$ and $p_K-1$ respectively,
where $J := (j_i)_{i=1}^{p_J -1}$ and $K := (k_i)_{i=1}^{p_K -1}$.  The
operation \textit{pound} is given by
\begin{align*}
\# : & (T^*M)^{\ten p_J} \ten \End(E) \times (T^*M)^{\ten p_K} \ten \End(E)
\rightarrow (T^*M)^{p_J + p_K - 2} \ten \End(E) : (A,B) \mapsto A \# B, \\
&(A \# B)(\del_{j_1}, ..., \del_{j_{p_J- 1}}, \del_{k_1}, ... , \del_{k_{p_K -
1}}) := \sum_{i=1}^nA(\del_{i}, \del_{j_1}, ..., \del_{j_{p_J - 1}}) B(
\del_i,\del_{k_1}, ... , \del_{k_{p_K - 1}}).
\end{align*}
In coordinates this is written in the form $ (A \# B)_{JK \ga}^{\gb} =
g^{jk}A_{j J \gd}^{\gb}B_{k K \ga}^{\gd}$.
Roughly speaking, $\#$ is matrix multiplication combined with contraction of the
first two forms.

Lastly we define the \textit{pound bracket} by 
\begin{align*}
[\cdot,\cdot]^{\#} : & (T^*M)^{\ten p_J} \ten \End(E) \times (T^*M)^{\ten p_K}
\ten \End(E) \rightarrow (T^*M)^{p_J + p_K - 2} \ten \End(E) \\
&: (A,B) \mapsto A\# B - B \# A.
\end{align*}

\begin{lemma}\label{D*D*2form}
Given $\N$ a connection and $\omega \in \Lambda^2(\End E)$, one has
\begin{equation}\label{eq:D*D*2formeq}
D^*D^* \omega = \frac{g^{i \ell}g^{jk}}{2} \left( F^{\gb}_{ij \gd}
\omega^{\gd}_{k \ell \ga} - F^{\gd}_{ij \ga} \omega_{k \ell \gd}^{\gb} \right),
\end{equation}
and in particular $D^*D^*F = 0$.

\begin{proof}
We compute
\begin{align*}
g^{i \ell}g^{jk} \nab_i \nab_j \omega_{k \ell \ga}^{\gb} &=\frac{g^{i
\ell}g^{jk}}{2}  \left( \nab_i \nab_j \omega_{k \ell \ga}^{\gb} - \nab_i \nab_j
\omega_{\ell k \ga}^{\gb} \right)\\
 &= \frac{g^{i \ell}g^{jk}}{2} [\nab_i, \nab_j] \omega_{k \ell \ga}^{\gb} \\
&= \frac{g^{i \ell}g^{jk}}{2} \left( \Rm_{ij k}^p \omega_{p \ell \ga}^{\gb} +
\Rm_{ij \ell}^p \omega_{k p \ga}^{\gb} - F^{\gd}_{ij \ga} \omega_{k \ell
\gd}^{\gb} + F^{\gb}_{ij \gd} \omega^{\gd}_{k \ell \ga} \right)\\
&= \frac{g^{i \ell}g^{jk}}{2} \left( g^{pq} \left( \Rm_{ji \ell q} \omega_{p k
\ga}^{\gb} + \Rm_{ij \ell q} \omega_{k p \ga}^{\gb} \right) - F^{\gd}_{ij \ga}
\omega_{k \ell \gd}^{\gb} + F^{\gb}_{ij \gd} \omega^{\gd}_{k \ell \ga} \right)\\
&= g^{i \ell}g^{jk} \left( g^{pq} \left( \Rm_{ij \ell q} \omega_{k p \ga}^{\gb}
\right) - \tfrac{1}{2} \left( F^{\gd}_{ij \ga} \omega_{k \ell \gd}^{\gb} +
F^{\gb}_{ij \gd} \omega^{\gd}_{k \ell \ga} \right) \right).
\end{align*}
Using the symmetries of the curvature tensor and $\gw$ it follows that $g^{i
\ell}g^{jk} g^{pq} \Rm_{ij \ell q} \omega_{k p \ga}^{\gb} = 0$.  Thus we
conclude that
\begin{align*}
g^{i \ell}g^{jk} \nab_i \nab_j \omega_{k \ell \ga}^{\gb} &= \frac{g^{i
\ell}g^{jk}}{2} \left( F^{\gd}_{ij \ga} \omega_{k \ell \gd}^{\gb} + F^{\gb}_{ij
\gd} \omega^{\gd}_{k \ell \ga} \right).
\end{align*}
Thus (\ref{eq:D*D*2formeq}) follows, from which the claim $D^* D^* F = 0$
immediately follows.
\end{proof}
\end{lemma}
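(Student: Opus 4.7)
My plan is to expand $D^*D^*\omega$ in local coordinates, use the antisymmetry of $\omega$ as a $2$-form to convert the second covariant derivatives into a commutator, and then invoke the Ricci identity to isolate the bundle curvature terms. Concretely, since $D^*_{\N} := \nab^*$, I would write
\[
D^*D^*\omega^\gb_\ga = g^{i\ell}g^{jk}\nab_i\nab_j\omega^\gb_{k\ell\ga},
\]
with the two divergences contracting successively into the two form slots of $\omega$.

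I would next symmetrize the contraction. Swapping the dummy indices $i \leftrightarrow j$ and $k \leftrightarrow \ell$ simultaneously leaves the metric factor $g^{i\ell}g^{jk}$ invariant, while the antisymmetry $\omega_{k\ell\ga}^\gb = -\omega_{\ell k\ga}^\gb$ produces an overall sign change, yielding
\[
g^{i\ell}g^{jk}\nab_i\nab_j\omega^\gb_{k\ell\ga} = \tfrac{1}{2}g^{i\ell}g^{jk}[\nab_i,\nab_j]\omega^\gb_{k\ell\ga}.
\]
Applying the Ricci identity for $\omega \in \Lambda^2(\End E)$ to this commutator then produces two Riemann curvature contributions acting on the form indices $k,\ell$, together with the two bundle curvature contributions $F^\gb_{ij\gd}\omega^\gd_{k\ell\ga}$ and $-F^\gd_{ij\ga}\omega^\gb_{k\ell\gd}$ coming from the action on the endomorphism indices.

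The main technical point is then to verify that both Riemann contributions vanish after contraction with $g^{i\ell}g^{jk}$. For each such term I would lower the raised $\Rm$ index with the metric, apply the pairwise symmetries of the Riemann tensor, and observe that the resulting expression pairs an antisymmetric Riemann index pair against the symmetric metric contraction combined with the antisymmetry of $\omega$; the upshot is that each Riemann contribution equals its own negative and therefore vanishes. This is the only step requiring real care, but it is purely symmetric bookkeeping.

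Once the Riemann contributions are discarded, the stated formula for $D^*D^*\omega$ drops out. For the final claim $D^*D^*F = 0$, I would set $\omega = F$ in the formula and observe that the two surviving terms differ only by the relabeling of dummy indices $(i,j) \leftrightarrow (k,\ell)$; since $g^{i\ell}g^{jk}$ is invariant under this exchange and the scalar components of $F$ commute, the two terms cancel identically, yielding the desired vanishing.
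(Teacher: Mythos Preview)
Your proposal is correct and follows essentially the same argument as the paper: write $D^*D^*\omega$ as a double contraction of $\nab_i\nab_j\omega_{k\ell\ga}^{\gb}$, use the antisymmetry of $\omega$ together with the simultaneous relabeling $(i,j)\leftrightarrow(k,\ell)$ to reduce to the commutator $[\nab_i,\nab_j]$, apply the Ricci identity, and then argue that the Riemann contributions vanish by symmetry. Your justification of $D^*D^*F=0$ via the index swap $(i,j)\leftrightarrow(k,\ell)$ is slightly more explicit than the paper's, which simply asserts it follows immediately, but the content is the same.
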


\subsection{Hamilton's monotonicity}

In \cite{Ham2, Struwe, Chen} entropy functionals were defined which are monotone
along Yang-Mills flow.  This entropy involves integrating the density
$\brs{F_{\N}}^2$ against a solution to the backwards heat equation.  In what
follows we rederive this monotonicity formula.  For concreteness, given a
solution to Yang-Mills flow on $[0,T)$, and a final value $G_T$ we consider a
one-parameter family
\begin{equation}\label{eq:bkqrdhteq}
\begin{cases}
\frac{\del G}{\del t} &= - \lap G\\
G(T) & = G_T. 
\end{cases}
\end{equation}
As usual we will frequently let $G_T$ be a Dirac delta mass centered at some
point of interest.

%%%%%%%%%%%%%%%%%%%%%%%%%%%%
%% LEMMA
%%%%%%%%%%%%%%%%%%%%%%%%%%%%
\begin{lemma}\label{varintFk} Let $\N_t$ be a solution to Yang-Mills flow and
$G_t \in C^{\infty}(M)$ a solution to \eqref{eq:bkqrdhteq}.  The following
equality holds:
\begin{align}
\begin{split}\label{eq:varintFk}
&\frac{\del}{\del t} \left[ |F|^2 G \right] + 4 \left| \frac{\N G \hook F}{G} -
D^*F \right|^2 G  +4 \nab^* X_G(\nab)\\
& \hsp - 4 g^{ip}g^{jq}g^{rs} F_{pr \ga}^{\gb} F_{qs \gb}^{\ga} \left(( \nab_i
\nab_j G) - \frac{(\nab_i G)(\nab_j G)}{G} \right) = 0.
\end{split}
\end{align}
where $X_G(\nab) := \frac{1}{4} |F|^2 (\nab G) + (\N G \hook F ) \# F - D^*F \#
F$.

\begin{proof}
Differentiating $|F|^2 G$ yields that
\begin{equation*}
\frac{\del}{\del t}\left[ |F|^2 G \right] = \frac{\del}{\del t} \left[ |F|^2
\right] G  +  |F|^2 \left( \frac{\del G}{\del t} \right).
\end{equation*}
For the first term on the right, since $\nab$ satisfies Yang-Mills flow,
$\dot{\gG}_{j \gb}^{\ga} = g^{uv}\nab_u F_{v j \gb}^{\ga}$, so
we compute, while incorporating divergence terms, the quantity
\begin{align*}
(\del_t |F|^2) G  &=  2 \langle \del_t F , F \rangle G  \\
&= 2 \langle D \dot{\gG}, F \rangle G \\
&= - 2 g^{ip} g^{jq} (D_i \dot{\gG}_{j \ga}^{\gb}) F_{pq \gb}^{\ga} G \\
&=  -2 g^{ip} g^{jq}\left( (\nab_i \dot{\gG}_{j \ga}^{\gb}) F_{pq \gb}^{\ga} -
(\nab_j \dot{\gG}_{i \ga}^{\gb}) F_{pq \gb}^{\ga} \right) G \\
&= -4 g^{ip} g^{jq} (\nab_i \dot{\gG}_{j \ga}^{\gb}) F_{pq \gb}^{\ga} G \\
& = -4 g^{ip}g^{jq} \left( \nab_i \left( \dot{\gG}_{j \ga}^{\gb} F_{pq
\gb}^{\ga} G \right) - \dot{\gG}_{j \ga}^{\gb} (\nab_i F_{pq \gb}^{\ga}) G -
\dot{\gG}_{j \ga}^{\gb} F_{pq \gb}^{\ga} (\nab_i G )\right)\\
& = 4 g^{ip}g^{jq} g^{vw}  \left( (\nab_v F_{w j \ga}^{\gb}) (\nab_i F_{pq
\gb}^{\ga}) G + (\nab_v F_{w j \ga}^{\gb}) F_{pq \gb}^{\ga} (\nab_i G ) \right)
\\
& \hsp -4 g^{ip}g^{jq} g^{vw} \left( \nab_i \left( (\nab_v F_{w j \ga}^{\gb})
F_{pq \gb}^{\ga} G \right)\right). 
\end{align*}
For the next term we have, using the second Bianchi identity and multiple
insertions of divergence terms,
\begin{align*}
 | F |^2 \del_t G
&=  - g^{i p} g^{j q} F_{ij \ga}^{\gb} F_{pq \gb}^{\ga}(- \lap G)\\
&= g^{i p} g^{j q} F_{ij \ga}^{\gb} F_{pq \gb}^{\ga} ( g^{vw}\nab_{v} \nab_{w} 
G) \\
&= g^{i p} g^{j q} g^{vw}  \left(\nab_v \left( F_{ij \ga}^{\gb} F_{pq \gb}^{\ga}
( \nab_{w}  G) \right) -2  (\nab_v F_{ij \ga}^{\gb}) F_{pq \gb}^{\ga}  (\nab_{w}
G) \right) \\
&= g^{i p} g^{j q} g^{vw}  \left(\nab_v \left( F_{ij \ga}^{\gb} F_{pq \gb}^{\ga}
( \nab_{w}  G) \right) + 2  (\nab_i F_{jv \ga}^{\gb} + \nab_j F_{vi \ga}^{\gb})
F_{pq \gb}^{\ga}  (\nab_{w} G) \right) \\
&= g^{i p} g^{j q} g^{vw}  \left(\nab_v \left( F_{ij \ga}^{\gb} F_{pq \gb}^{\ga}
( \nab_{w}  G) \right) + 4  (\nab_i F_{jv \ga}^{\gb}) F_{pq \gb}^{\ga} 
(\nab_{w} G) \right) \\
&= g^{i p} g^{j q} g^{vw}  \left(\nab_v \left( F_{ij \ga}^{\gb} F_{pq \gb}^{\ga}
( \nab_{w}  G) \right) + 4 \nab_i\left( F_{jv \ga}^{\gb} F_{pq \gb}^{\ga} 
(\nab_{w} G) \right) \right)\\
& \hsp -4  g^{i p} g^{j q} g^{vw}  \left( F_{jv \ga}^{\gb}(\nab_i  F_{pq
\gb}^{\ga})  (\nab_{w} G) + F_{jv \ga}^{\gb}  F_{pq \gb}^{\ga}  (\nab_i \nab_{w}
G) \right).  
\end{align*}
We combine the identities and sort out the divergence terms ($\mathsf{Div}$)
with some reindexing,
\begin{align*}
& \mathsf{Div} = g^{i p} g^{j q} g^{vw}  \left(\nab_v \left( F_{ij \ga}^{\gb}
F_{pq \gb}^{\ga} ( \nab_{w}  G) \right) + 4 \nab_i\left( F_{jv \ga}^{\gb} F_{pq
\gb}^{\ga}  (\nab_{w} G) \right)+ 4  \left( \nab_i \left( (\nab_w F_{jv
\ga}^{\gb}) F_{pq \gb}^{\ga} G \right) \right) \right)\\
 &= g^{i p} g^{j q} g^{vw} \left(\nab_v \left( F_{ij \ga}^{\gb} F_{pq \gb}^{\ga}
( \nab_{w}  G) \right) + 4 \nab_v \left( F_{ji \ga}^{\gb} F_{wq \gb}^{\ga} 
(\nab_{p} G) \right)+ 4  \left( \nab_v \left( (\nab_p F_{ji \ga}^{\gb}) F_{wq
\gb}^{\ga} G \right) \right) \right)\\
 & = g^{i p} g^{j q} g^{vw} \left(\nab_v \left( F_{ij \ga}^{\gb} F_{pq
\gb}^{\ga} ( \nab_{w}  G) \right) + 4 \nab_v \left( F_{ij \ga}^{\gb} F_{qw
\gb}^{\ga}  (\nab_{p} G) \right)+ 4  \left( \nab_v \left( (\nab_p F_{ij
\ga}^{\gb}) F_{qw \gb}^{\ga} G \right) \right) \right).
\end{align*}
Therefore in coordinate invariant form,
\begin{align*}  
\mathsf{Div} & = -4 \nab^* \left(- \frac{1}{4} |F|^2 \nab G + (\N G \hook F) \#
F - D^*F \# F \right).
\end{align*}
We set $X_G(\nab) := -\frac{1}{4} |F|^2 (\nab G) + (\N G \hook F) \# F - D^*F \#
F$. Combining all terms we have
\begin{align*}
\tfrac{\del}{\del t} \left[ |F|^2 G \right]
& =   4 g^{ip} g^{jq} g^{uv} \left( (\nab_u F_{v j \gb}^{\ga} )( \nab_i F_{pq
\ga}^{\gb}) G + 2 (\nab_u F_{v j \gb}^{\ga} )F_{pq \ga}^{\gb} (\nab_i G) + F_{vj
\ga}^{\gb} F_{pq \gb}^{\ga}  \left( \nab_i \nab_{u} G \right) \right) - 4 \nab^*
X_G(\nab)\\
&= -4 \left|D^* F\right|^2 + 8 \left\langle D^*F, \nab G \hook F \right\rangle +
4 g^{ip} g^{jq} g^{uv} F_{vj \ga}^{\gb} F_{pq \gb}^{\ga}  \left( \nab_i \nab_{u}
G \right) - 4( \nab^* X_G(\nab)).
\end{align*}
We recombine terms and observe
\begin{equation*}
\left| \frac{\N G \hook F}{G} -D^*F \right|^2 = \left| \frac{(\nab G) \hook
F}{G} \right|^2 - 2 \left\langle D^*F ,  \frac{\N G \hook F}{G} \right\rangle +
\left| D^*F \right|^2.
\end{equation*}
Therefore we incorporate this in and have
\begin{align*}
\frac{\del}{\del t} \lbr \left| F \right|^2 G \rbr &=-  4 \left| \frac{\N G
\hook F}{G} - D^*F \right|^2 + 4 \left| \frac{(\nab G) \hook F}{G} \right|^2\\
& \hsp + 4 g^{ip} g^{jq} g^{uv} F_{vj \ga}^{\gb} F_{pq \gb}^{\ga}  \left( \nab_i
\nab_{u} G \right) - 4( \nab^* X_G(\nab)).
\end{align*}
The result follows.
\end{proof}
\end{lemma}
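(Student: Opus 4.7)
The proof is a pointwise computation in which we differentiate $|F|^2 G$ using the Yang-Mills flow equation for $\nabla_t$ and the backward heat equation for $G$, and then repeatedly peel off total-divergence pieces until the remainder combines into the stated completed-square form.

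The plan is to write
\begin{align*}
\tfrac{\del}{\del t}\!\left[|F|^2 G\right] \;=\; 2\langle \del_t F, F\rangle G + |F|^2\, \del_t G
\end{align*}
and treat the two summands separately. For the first, I would use the Yang-Mills flow equation in the form $\dot{\Gamma}_{j\gb}^{\ga} = g^{uv}\N_u F_{vj\gb}^{\ga}$, so that $\del_t F = D\dot{\Gamma}$. Pairing with $F$ and writing each $\N_i(\cdots)F$ as $\N_i(\cdots F) - (\cdots)\N_i F$, I would successively transfer derivatives off $\dot\Gamma$ to produce one divergence term (which will feed into $X_G$) plus the genuine quadratic expressions $|D^*F|^2$ and $\langle D^*F, \N G \hook F\rangle$ after an application of the Bianchi identity to replace $\N F$-contractions suitably.

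For the second summand, I would substitute $\del_t G = -\Delta G = g^{vw}\N_v\N_w G$, then integrate by parts (pointwise, via the product rule) twice. At the intermediate step, exactly one $\N F$ appears contracted against $\N G$, and here I would apply the second Bianchi identity $\N_v F_{ij} + \N_i F_{jv} + \N_j F_{vi} = 0$ together with the antisymmetry of $F$ to convert the contraction into the form $(\N G \hook F)\#F$. The result is a divergence piece that contributes to $X_G$, plus the Hessian term $g^{ip}g^{jq}g^{vw} F_{vj}F_{pq}(\N_i\N_w G)$.

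At this point I would collect all divergence terms; by relabeling indices and using the symmetries of $F$ they assemble precisely into $-4\N^* X_G(\N)$ with $X_G$ as stated. The remaining non-divergence quantity equals
\begin{align*}
-4|D^*F|^2 + 8\langle D^*F, \N G \hook F\rangle + 4 g^{ip}g^{jq}g^{uv}F_{vj}F_{pq}(\N_i\N_u G),
\end{align*}
and the final step is algebraic: I would complete the square by writing
\begin{align*}
\left|\tfrac{\N G \hook F}{G} - D^*F\right|^2 G \;=\; \tfrac{|(\N G)\hook F|^2}{G} - 2\langle D^*F, \N G\hook F\rangle + |D^*F|^2\, G,
\end{align*}
and recognizing that $|(\N G)\hook F|^2 / G$ is exactly what is needed so that, after multiplying the Hessian term by $G$ and subtracting $(\N_i G)(\N_j G)/G$, one obtains the quantity in the statement. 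The main obstacle is bookkeeping: keeping index conventions consistent while carefully tracking which term is a divergence and which is not, and noting the precise place where the Bianchi identity is applied to allow the $\N G\hook F$ structure to emerge. Once these pieces are identified, the rearrangement into \eqref{eq:varintFk} is immediate.
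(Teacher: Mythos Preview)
Your proposal is correct and follows essentially the same route as the paper: the same product-rule decomposition of $\partial_t(|F|^2 G)$, the same use of the Yang-Mills flow to write $\partial_t F = D\dot\Gamma$ with $\dot\Gamma = D^*F$, the same application of the second Bianchi identity to the $|F|^2\partial_t G$ term, the same collection of divergence pieces into $-4\nabla^* X_G(\nabla)$, and the same final completion of the square. The only difference is cosmetic --- the paper carries out the index manipulations explicitly rather than describing them --- so there is nothing substantive to add.
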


\begin{cor}\label{translatortime} Let $\N_t$ be a solution to Yang-Mills flow 
and $G_t \in C^{\infty}(M)$ a solution to \eqref{eq:bkqrdhteq}.  The following
equality holds:
\begin{align*}
&\frac{\del}{\del t} \left[\int_M{ |F|^2 G dV_g} \right] + 4 \int_M{ \left|
\frac{\N G \hook F}{G} - D^*F \right|^2 G dV_g} \\
&\hsp  - 4 \int_M{ g^{ip}g^{jq}g^{rs} F_{pr \ga}^{\gb} F_{qs \gb}^{\ga} \left((
\nab_i \nab_j G) - \frac{(\nab_i G)(\nab_j G)}{G} \right) dV_g} = 0.
\end{align*}
\end{cor}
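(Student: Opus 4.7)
The plan is to obtain this corollary as an immediate consequence of the pointwise identity established in Lemma \ref{varintFk}, by integrating that identity over $M$ and discarding the total-divergence term. Concretely, I would start from
\begin{align*}
\frac{\partial}{\partial t}\bigl[|F|^2 G\bigr] + 4\left|\frac{\nabla G \hook F}{G} - D^*F\right|^2 G + 4\nabla^* X_G(\nabla) - 4 g^{ip}g^{jq}g^{rs} F_{pr\alpha}^{\beta} F_{qs\beta}^{\alpha} \left((\nabla_i \nabla_j G) - \frac{(\nabla_i G)(\nabla_j G)}{G}\right) = 0,
\end{align*}
integrate both sides against $dV_g$ over $M$, and treat each resulting term separately.

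The time-derivative term is handled by exchanging $\partial/\partial t$ with the integral: since $(M,g)$ is fixed in time and compact, and the integrand $|F|^2 G$ and its $t$-derivative are smooth, dominated-convergence (or just smoothness on the compact space) gives
$$\int_M \frac{\partial}{\partial t}\bigl[|F|^2 G\bigr]\, dV_g = \frac{\partial}{\partial t}\int_M |F|^2 G\, dV_g.$$
The two terms involving $|\cdot|^2 G$ and the curvature/Hessian expression pass through the integral unchanged, producing exactly the quantities written in the statement of the corollary.

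The one term that requires comment is the divergence contribution $4\int_M \nabla^* X_G(\nabla)\, dV_g$. Because $(M^n,g)$ is compact without boundary (as assumed in the Background subsection) and $X_G(\nabla)$ is a smooth $E$-valued $1$-form, the divergence theorem gives
$$\int_M \nabla^* X_G(\nabla)\, dV_g = 0,$$
so this term drops out and the claimed identity follows by rearranging. I do not expect any genuine obstacle here — the work was all done in Lemma \ref{varintFk}; this corollary is simply its integrated version, with the only subtlety being a routine appeal to Stokes' theorem on the closed manifold $M$.
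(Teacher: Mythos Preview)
Your proposal is correct and is exactly the (implicit) argument the paper intends: the corollary is stated immediately after Lemma \ref{varintFk} with no separate proof, so integrating the pointwise identity over the closed manifold $M$ and dropping the divergence term $\int_M \nabla^* X_G(\nabla)\, dV_g$ via Stokes' theorem is precisely what is meant. One small terminological quibble: $X_G(\nabla)$ is an ordinary $1$-form on $M$ (the $\End E$ indices in each summand are fully traced out), not an $E$-valued form; but this does not affect the argument.
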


\begin{cor}[Hamilton's Entropy Monotonicity Formula, \cite{Ham2} Theorem C]
\label{hamentromon}  Let $\N_t$ be a solution to Yang-Mills flow and $G_t
\in C^{\infty}(M)$ a solution to \eqref{eq:bkqrdhteq}.  Then
\begin{align}
\begin{split}\label{eq:hamjunkterm}
0 &= \frac{\del}{\del t} \left[ (T-t)^2 \int_M{|F|^2 G dV_g} \right] + 4 (T-t)^2
\int_M{\left| \frac{\N G \hook F}{G} - D^*F \right|^2 G dV_g} \\
& \hsp - 4(T-t)^2 \int_M{g^{ip}g^{jq}g^{rs} F_{pr \ga}^{\gb} F_{qs \gb}^{\ga}
\left( (\nab_i \nab_j G) - \frac{(\nab_i G)(\nab_j G)}{G} + \frac{ G
{g_{ij}}}{2(T-t)} \right)dV_g}. 
\end{split}
\end{align}
\end{cor}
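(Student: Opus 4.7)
The proof is essentially a direct consequence of Corollary \ref{translatortime} combined with one simple algebraic identity involving a metric contraction. The plan is to multiply Corollary \ref{translatortime} through by $(T-t)^2$, rearrange using the product rule in $t$, and then repackage the resulting leftover term as part of the spatial integrand.

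First I would multiply the identity in Corollary \ref{translatortime} by the factor $(T-t)^2$, producing $(T-t)^2$ weighted versions of each term. Since the $\partial / \partial t$ derivative acts only on $\int_M |F|^2 G\, dV_g$ and not on the time-independent $(T-t)^2$ prefactor, I would next use the product rule to bring the $(T-t)^2$ inside the time derivative via the identity
\begin{align*}
(T-t)^2 \frac{\partial}{\partial t}\int_M |F|^2 G\, dV_g = \frac{\partial}{\partial t}\!\left[(T-t)^2 \int_M |F|^2 G\, dV_g\right] + 2(T-t)\int_M |F|^2 G\, dV_g.
\end{align*}
This exchange introduces the desired $\partial_t[(T-t)^2 \int |F|^2 G]$ appearing in \eqref{eq:hamjunkterm}, at the cost of one leftover term of the form $2(T-t)\int_M |F|^2 G\, dV_g$.

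The remaining step is to reinterpret this leftover as the $\frac{Gg_{ij}}{2(T-t)}$ contribution inside the last integrand of \eqref{eq:hamjunkterm}. The key algebraic identity, obtained from $g^{ip}g_{ij} = \delta^p_j$, is
\begin{align*}
g^{ip}g^{jq}g^{rs} F_{pr\ga}^\gb F_{qs\gb}^\ga g_{ij} = g^{pq}g^{rs} F_{pr\ga}^\gb F_{qs\gb}^\ga = |F|^2.
\end{align*}
Using this I can rewrite
\begin{align*}
2(T-t)\int_M |F|^2 G\, dV_g = 4(T-t)^2 \int_M g^{ip}g^{jq}g^{rs} F_{pr\ga}^\gb F_{qs\gb}^\ga \cdot \frac{G g_{ij}}{2(T-t)}\, dV_g,
\end{align*}
and absorb this contribution into the final integrand of Corollary \ref{translatortime} multiplied by $(T-t)^2$, merging $\nab_i\nab_j G - \frac{(\nab_i G)(\nab_j G)}{G}$ with the new $\frac{Gg_{ij}}{2(T-t)}$ piece, exactly as in the statement.

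There is no genuine analytic obstacle here; the passage from Corollary \ref{translatortime} to Corollary \ref{hamentromon} is purely bookkeeping. The reason for organizing the monotonicity formula in this way, following Huisken and Colding--Minicozzi in the mean curvature setting, is structural: a direct computation with the Euclidean backward heat kernel $G = (4\pi(T-t))^{-n/2} e^{-|x-x_0|^2/(4(T-t))}$ shows $\nab_i\nab_j \log G + \frac{g_{ij}}{2(T-t)} = 0$, which forces the augmented tensor $\nab_i\nab_j G - \frac{(\nab_i G)(\nab_j G)}{G} + \frac{Gg_{ij}}{2(T-t)}$ to vanish identically on $\mathbb{R}^n$. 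Thus the final integrand drops out for this canonical choice of $G$, leaving a clean monotonicity whose perfect-square defect term characterizes shrinking soliton critical points; this is exactly the structural input needed for the variational theory developed in \S \ref{vars}.
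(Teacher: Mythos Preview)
Your approach is correct and is exactly the implicit derivation the paper has in mind (the corollary is stated without proof immediately after Corollary \ref{translatortime}). One minor slip: in this paper's conventions $|F|^2 = -g^{pq}g^{rs}F_{pr\ga}^{\gb}F_{qs\gb}^{\ga}$ (see the line $|F|^2\del_t G = -g^{ip}g^{jq}F_{ij\ga}^{\gb}F_{pq\gb}^{\ga}(-\Delta G)$ in the proof of Lemma \ref{varintFk}, or the computation in \S\ref{gastelsec}), so your displayed identity $g^{pq}g^{rs}F_{pr\ga}^{\gb}F_{qs\gb}^{\ga}=|F|^2$ is off by a sign; once this is corrected, the extra $+2(T-t)\int_M|F|^2G$ indeed matches $-4(T-t)^2\int g^{ip}g^{jq}g^{rs}F_{pr\ga}^{\gb}F_{qs\gb}^{\ga}\cdot\frac{Gg_{ij}}{2(T-t)}$, and the absorption into the last integrand goes through with the stated $+\frac{Gg_{ij}}{2(T-t)}$.
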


This monotonicity formula is used in proving the following result of Hamilton: 

\begin{thm}[Hamilton's Monotonicity Formula, \cite{Ham2} Theorem
C]\label{thm:hammonoform}
Given the functional
\begin{equation*}
\mathcal{F}(\N,t) :=  (T-t)^2 \int_{M} |F_{\N}|^2 G dV_g,
\end{equation*}
suppose $\N_t$ is a solution to Yang-Mills flow on $t \in [0,T)$. Then
$\mathcal{F}(\N_t,t)$ is monotone decreasing in $t$ when $M$ is Ricci parallel
with weakly positive sectional curvatures, while on a general manifold
\begin{equation*}
\mathcal{F}(\N_t,t) \leq C_M \mathcal{F}(\N_{\tau},\tau) + C_M(t-\tau)^2
\mathcal{YM}(\N_0). 
\end{equation*}
whenever $T-1 \leq \tau \leq t \leq T$, and $C_M$ is a constant depending only
on $M$.
\end{thm}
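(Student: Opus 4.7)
The plan is to apply the pointwise identity of Corollary \ref{hamentromon}, choosing $G = G_{(x_0,T)}$ to be the fundamental solution of the backwards heat equation $\del_t G = - \lap G$ concentrated at terminal time $T$ and spatial basepoint $x_0 \in M$. Integrating over $M$, the identity \eqref{eq:hamjunkterm} rearranges to
\begin{align*}
\dt \FF(\N_t, t) = -4(T-t)^2 \int_M \brs{\tfrac{\N G \hook F}{G} - D^*F}^2 G\, dV_g + 4(T-t)^2 \int_M H^{ij} E_{ij}(G)\, dV_g,
\end{align*}
where $H^{ij} := g^{ip}g^{jq}g^{rs} F_{pr\ga}^{\gb} F_{qs\gb}^{\ga}$ is the symmetric $2$-tensor formed from $F \ten F$ (whose trace against $g_{ij}$ yields $\brs{F}^2$ up to the sign convention already used in the derivation of Lemma \ref{varintFk}) and
\begin{align*}
E_{ij}(G) := \N_i \N_j G - \tfrac{\N_i G \N_j G}{G} + \tfrac{G g_{ij}}{2(T-t)}
\end{align*}
is the matrix Li-Yau-Hamilton expression associated to $G$. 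The first integral on the right is manifestly non-positive, so the entire analysis reduces to controlling the sign and magnitude of $\int H^{ij} E_{ij}(G) \, dV_g$.

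I would first treat the model case $M = \mathbb R^n$ with the Euclidean backwards heat kernel $G(x,t) = (4\pi(T-t))^{-n/2} e^{-\brs{x-x_0}^2/(4(T-t))}$. A direct calculation shows $E_{ij}(G) \equiv 0$, producing the sharp Huisken-type monotonicity for $\FF$ and, incidentally, the shrinking soliton equation $D^*F = \tfrac{\N G \hook F}{G}$ in the equality case. Next, on a Ricci parallel $(M,g)$ with weakly positive sectional curvatures, I would invoke Hamilton's matrix Harnack inequality for the scalar heat equation, which yields sign-definiteness of $E_{ij}(G)$ as a symmetric bilinear form. Combined with the compatible sign of $H^{ij}$, this forces $H^{ij} E_{ij}(G) \leq 0$ pointwise, whence $\FF(\N_t, t)$ is non-increasing.

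For a general Riemannian manifold one can no longer hope for $E_{ij}(G)$ to be sign-definite, so the plan there is instead to extract a quantitative pointwise bound
\begin{align*}
\brs{E_{ij}(G)} \leq C_M g_{ij} G
\end{align*}
valid on $[T-1, T)$, with $C_M$ depending only on curvature and injectivity radius data of $M$. Combined with the standard energy monotonicity $\YM(\N_t) \leq \YM(\N_0)$ along Yang-Mills flow, this produces a differential inequality of the schematic form $\dt \FF(\N_t, t) \leq C_M \FF(\N_t, t) + C_M \YM(\N_0)$; a Gr\"onwall integration over $[\tau, t] \subset [T-1, T)$ and absorption of constants then yields the claimed estimate.

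The main analytic obstacle will be producing the bound $\brs{E_{ij}(G)} \leq C_M g_{ij} G$ uniformly down to the terminal time $t = T$, since the true fundamental solution is delicate to control sharply near its singularity on a curved manifold. The standard workaround, which I would adopt here, is to replace the exact $G$ by an approximate fundamental solution built from a Gaussian in geodesic normal coordinates around $x_0$, cut off smoothly at a scale below the injectivity radius; for such a $G$ the tensor $E_{ij}$ can be computed explicitly and its deviation from zero bounded directly by local curvature data, at the cost of generating further error terms that can be absorbed into the $\YM(\N_0)$ contribution.
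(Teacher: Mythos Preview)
The paper does not actually give a proof of this theorem; it is stated as a citation of Hamilton's original result (\cite{Ham2}, Theorem C), with Corollary \ref{hamentromon} serving as the preparatory identity. Your proposal is correct and is precisely Hamilton's original argument: Corollary \ref{hamentromon} reduces the question to the sign and size of the term $\int H^{ij} E_{ij}(G)\, dV_g$, the matrix Harnack inequality of \cite{Ham1} for Ricci parallel manifolds with nonnegative sectional curvatures forces $E_{ij}(G) \geq 0$, and since $H^{ij}V_i V_j = -\brs{V \hook F}^2 \leq 0$ under the paper's sign conventions, the contraction $H^{ij} E_{ij}(G)$ is nonpositive pointwise. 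For the general case your Gr\"onwall strategy with an approximate (cutoff Gaussian) kernel is again the standard route, and the extra $\YM(\N_0)$ term arises exactly as you say, from the failure of the approximate $G$ to satisfy the backwards heat equation outside a geodesic ball. There is nothing further to compare: your outline \emph{is} the proof the paper defers to the literature.
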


\subsection{Monotone entropy functionals on \texorpdfstring{$\mathbb
R^n$}{Rn}}\label{monentropRn}

In this subsection we specialize Hamilton's monotonicity formula \cite{Ham2} to
the case of $\mathbb R^n$ (compare \cite{Chen,Naito}).  We also observe the
existence
of ``steady'' and ``expanding'' entropy functionals which are fixed on steady
and expanding solitons respectively.  These functionals are so far only formal
objects, as they involve integrals which are not likely to converge in general. 
We will verify the corresponding monotonicity formulas in Proposition
\ref{monform}.

\begin{defn}\label{def:htkerFfunct} Let $M = \mathbb{R}^n$, $\nab$ a connection,
and $x_0 \in \mathbb{R}^n $. The \textit{shrinker kernel based at $(x_0,t_0)$}
is given by, for $t < t_0$,
\begin{equation}\label{eq:shrker}
G_{x_0,t_0}(x,t) := \frac{e^{\frac{- |x - x_0 |^2}{4(t_0 -t)}}}{(4 \pi (t_0 -
t))^{n/2}},
\end{equation}
and the \emph{$\mathcal{F}$-functional} is given by
\begin{equation}\label{eq:monforms}
 \mathcal{F}_{x_0,t_0}(\nab , t) := (t- t_0)^2 \int_{\bRn}{|F_{\nab}|^2
G_{x_0,t_0}(x,t) dV}.
\end{equation}
The \textit{translator kernel based at $(x_0,t_0)$} is given by, for $t, t_0 \in
\mathbb{R}$
\begin{equation}\label{eq:trnker}
G^{\mathcal{T}}_{x_0,t_0}(x,t) := e^{\langle x_0, x \rangle - |x_0|^2 (t-t_0)},
\end{equation}
and the \emph{$\mathcal{F}^{\mathcal{T}}$-functional} will be given by
\begin{equation}\label{eq:monformt}
\mathcal{F}_{x_0,t_0}^{\mathcal{T}}(\nab , t) := \int_{\bRn}{|F_{\nab}|^2
G_{x_0,t_0}^{\mathcal{T}}(x,t)dV}.
\end{equation}
The \textit{expander kernel based at $(x_0,t_0)$} is given by, for $t >t_0$,
\begin{equation}\label{eq:expker}
G^{\mathcal{E}}_{x_0,t_0}(x,t) := \frac{e^{\frac{|x-x_0|^2}{4(t-t_0)}}}{(4 \pi
(t-t_0))^{n/2}},
\end{equation}
and the \emph{$\mathcal{F}^{\mathcal{E}}$-functional} will be given by
\begin{equation}\label{eq:monforme}
\mathcal{F}_{x_0,t_0}^{\mathcal{E}}(\nab, t) := (t_0 - t)^2
\int_{\bRn}{|F_{\nab}|^2 G_{x_0,t_0}^{\mathcal{E}}(x,t) dV}.
\end{equation}
\end{defn}

\begin{prop}[Monotonicity formulas]\label{monform}  Let $\ga,\gb \in [-\infty,
\infty]$ with $\ga < \gb$, and let $\nab_t \in \mathcal{A}_E \times [\ga,\gb)$
be a solution to Yang-Mills flow. Given $(x_0,t_0) \in \mathbb R^n \times
[\ga,\gb]$, the functionals $\mathcal{F}_{x_0,t_0}(\nab_t,t)$,
$\mathcal{F}^{\mathcal{T}}_{x_0,t_0}(\nab_t,t)$,
$\mathcal{F}^{\mathcal{E}}_{x_0,t_0}(\nab_t,t)$ are monotonically decreasing in
$t$.

\begin{proof}
We first demonstrate that $G, G^{\mathcal T}$ and $G^{\mathcal E}$ all satisfy
(\ref{eq:bkqrdhteq}). First we have, for the shrinker kernel \eqref{eq:shrker},
\begin{align*}
\left( \lap + \frac{\del}{\del t} \right) \left[ G_{x_0,t_0}(x,t) \right] &=
\frac{1}{(4 \pi (t_0-t))^{\frac{n}{2}}} \nab_i \left(
-\frac{(x-x_0)_i}{2(t_0-t)} e^{\frac{-|x - x_0|^2}{4(t_0-t)}} \right) \\
& \hsp -  G_{x_0,t_0}(x,t) \left( \frac{|x - x_0|^2}{4(t_0-t)^2} -
\frac{n}{2(t_0-t)} \right)\\
& = 0.
\end{align*}
This verifies the first case. We next compute, for the translator kernel
\eqref{eq:trnker},
\begin{align*}
\left( \lap + \frac{\del}{\del t} \right) \left[ G^{\mathcal{T}}_{x_0,t_0}(x,t)
\right] = \nab_i \left( x_i G^{\mathcal{T}}_{x_0,t_0}(x,t) \right) - |x_0|^2
G^{\mathcal{T}}_{x_0,t_0}(x,t) = 0.
\end{align*}
This verifies the second case. Finally we consider the expander kernel
\eqref{eq:expker},
\begin{align*}
\left( \lap + \frac{\del}{\del t} \right) \left[ G^{\mathcal{E}}_{x_0,t_0}(x,t)
\right] & = \frac{1}{(4 \pi (t - t_0))^{\frac{n}{2}}} \nab_i \left( \frac{
(x-x_0)_i}{2(t-t_0)} e^{\frac{|x - x_0|^2}{4(t-t_0)}} \right) \\
& \hsp - G^{\mathcal{E}}_{x_0,t_0}(x,t) \left( \frac{n}{2(t-t_0)} + \frac{|x -
x_0|^2}{4(t-t_0)^2} \right)\\
& = 0.
\end{align*}
We apply Corollary \ref{hamentromon}, and show the vanishing of the last term of
 \eqref{eq:hamjunkterm}.  It suffices to compute, for $\mathcal{X} \in \{
\mathcal{E}, \mathcal{T} \}$ and omitted, quantities of the form
\begin{equation}
\nab_i \nab_j \left( G^{\mathcal{X}}_{x_0,t_0} \right) - \frac{(\nab_i
G^{\mathcal{X}}_{x_0,t_0})( \nab_j G^{\mathcal{X}}_{x_0,t_0}
)}{G^{\mathcal{X}}_{x_0,t_0}}.
\end{equation}
In each case, we compute the two main quantities of the last quantity
of\eqref{eq:hamjunkterm}. We first address the shrinker kernel
\eqref{eq:shrker}.
\begin{align}
\begin{split}\label{eq:monforms1}
\nab_i \nab_j \left( G_{x_0,t_0} \right) &= \frac{1}{(4 \pi (t_0 -
t))^{\frac{n}{2}}}\nab_i \nab_j \lbr e^{-\frac{|x- x_0|^2}{4(t_0-t)} }\rbr\\
&= \frac{1}{(4 \pi (t_0-t))^{\frac{n}{2}}}\nab_i \lbr -\frac{(x-x_0)_j}{2(t_0 -
t)}e^{\frac{|x-x_0|^2}{4(t_0 - t)} }\rbr\\
&= \left( - \frac{\delta_{ij}}{2(t_0-t)} + \frac{(x-x_0)_i
(x-x_0)_j}{4(t_0-t)^2} \right) G_{x_0,t_0}.
\end{split}
\end{align}
We also have
\begin{equation}\label{eq:monforms2}
\frac{(\nab_i G_{x_0,t_0}) (\nab_j G_{x_0,t_0}) }{G_{x_0,t_0}} = \frac{(x-x_0)_i
(x-x_0)_j}{4(t_0-t)^2} G_{x_0,t_0}.
\end{equation}
Combining \eqref{eq:monforms1} and \eqref{eq:monforms2} yields
\begin{equation*}
\nab_i \nab_j (G_{x_0,t_0}) - \frac{(\nab_i G_{x_0,t_0}) (\nab_j
G_{x_0,t_0})}{G_{x_0,t_0}} = - \frac{\delta_{ij}}{2(t_0-t)} G_{x_0,t_0}.
\end{equation*}
We conclude that the last term of \eqref{eq:hamjunkterm} vanishes, and thus the
temporal monotonicity of $\mathcal{F}_{x_0,t_0}$ with respect to $t$ holds.

We next consider the translator kernel \eqref{eq:trnker}. Observe that
\begin{equation*}
\nab_i \nab_j \left( G^{\mathcal{T}}_{x_0,t_0} \right) - \frac{(\nab_i
G^{\mathcal{T}}_{x_0,t_0})( \nab_j
G^{\mathcal{T}}_{x_0,t_0})}{G^{\mathcal{T}}_{x_0,t_0}} = \nab_i \left( (x_0)_j
G^{\mathcal{T}}_{x_0,t_0} \right)-  (x_0)_i (x_0)_j G^{\mathcal{T}}_{x_0,t_0}=
0.
\end{equation*}
It follows that the last term in the expression of Corollary
\ref{translatortime} vanishes, and so the monotonicity of
$\mathcal{F}_{x_0,t_0}^{\mathcal{T}}$ holds.  Lastly we consider the expander
kernel \eqref{eq:expker}. Observe that
\begin{align}
\begin{split}\label{eq:monforme1}
\nab_i \nab_j \left( G^{\mathcal{E}}_{x_0,t_0} \right) &= \frac{1}{(4 \pi
(t-t_0))^{\frac{n}{2}}} \nab_i \nab_j \lbr e^{\frac{|x-x_0|^2}{4(t-t_0)}} \rbr\\
&=\frac{1}{(4 \pi (t-t_0))^{\frac{n}{2}}} \nab_i \lbr \frac{
(x-x_0)_j}{2(t-t_0)} e^{\frac{|x-x_0|^2}{4(t-t_0)}} \rbr \\
&=\left( \frac{\delta_{ij}}{2(t-t_0)} + \frac{ (x-x_0)_i  (x-x_0)_j}{4(t-t_0)^2}
\right) G^{\mathcal{E}}_{x_0,t_0}.
\end{split}
\end{align}
We also have that
\begin{align}
\begin{split}\label{eq:monforme2}
\frac{(\nab_i G^{\mathcal{E}}_{x_0,t_0})(\nab_j
G^{\mathcal{E}}_{x_0,t_0})}{G^{\mathcal{E}}_{x_0,t_0}} &= \frac{(x-x_0)_i
(x-x_0)_j}{4(t-t_0)^2} G^{\mathcal{E}}_{x_0,t_0}.
\end{split}
\end{align}
Combining \eqref{eq:monforme1} and \eqref{eq:monforme2} yields
\begin{equation*}
\nab_i \nab_j (G^{\mathcal{E}}_{x_0,t_0}(x)) - \frac{(\nab_i
G^{\mathcal{E}}_{x_0,t_0}) (\nab_j
G^{\mathcal{E}}_{x_0,t_0})}{G^{\mathcal{E}}_{x_0,t_0}} =
\frac{\delta_{ij}}{2(t-t_0)} G^{\mathcal{E}}_{x_0,t_0},
\end{equation*}
We conclude the monotonicity of $\mathcal{F}_{x_0,t_0}^{\mathcal{E}}$ from
Corollary \ref{hamentromon}.
\end{proof}
\end{prop}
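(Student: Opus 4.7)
The plan is to apply the general integral identities of Corollary \ref{translatortime} and Corollary \ref{hamentromon} in turn to each of the three kernels. Both corollaries apply to any $G$ satisfying the backwards heat equation $\partial_t G + \Delta G = 0$, so the first step in each case is to verify this PDE for $G$, $G^{\mathcal{T}}$, and $G^{\mathcal{E}}$ by direct computation. For the shrinker and expander kernels, the time derivative of the Gaussian prefactor $(4\pi|t-t_0|)^{-n/2}$ produces a $\mp \frac{n}{2(t-t_0)}$ term while differentiating the exponent produces a $\mp \frac{|x-x_0|^2}{4(t-t_0)^2}$ term, and these together balance $\Delta G$. For the translator $G^{\mathcal{T}} = e^{\langle x_0, x\rangle - |x_0|^2(t-t_0)}$ one has $\partial_t G^{\mathcal{T}} = -|x_0|^2 G^{\mathcal{T}}$ while $\Delta G^{\mathcal{T}} = |x_0|^2 G^{\mathcal{T}}$, again by inspection.

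With the backwards heat equation in hand, the problem reduces to showing that the remaining drift-type integrand
$$
g^{ip}g^{jq}g^{rs} F_{pr\alpha}^\beta F_{qs\beta}^\alpha \, A_{ij}
$$
vanishes, where for the translator (using Corollary \ref{translatortime}) the tensor $A_{ij}$ is $\nabla_i \nabla_j G - (\nabla_i G)(\nabla_j G)/G$, while for the shrinker and expander (using Corollary \ref{hamentromon} with $T = t_0$) it is this same expression plus the correction $G g_{ij}/2(T-t)$. The strategy is to show that in each case $A_{ij}$ is a scalar multiple of $g_{ij}$, chosen so that the correction term kills it entirely. For the translator, $\nabla_i G^{\mathcal{T}} = (x_0)_i G^{\mathcal{T}}$ yields $\nabla_i \nabla_j G^{\mathcal{T}} = (x_0)_i (x_0)_j G^{\mathcal{T}} = (\nabla_i G^{\mathcal{T}})(\nabla_j G^{\mathcal{T}})/G^{\mathcal{T}}$, so $A_{ij} \equiv 0$. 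For the shrinker, $\nabla_i G = -\frac{(x-x_0)_i}{2(t_0-t)} G$ gives after a short calculation $A_{ij} = -\frac{\delta_{ij}}{2(t_0-t)} G$ before the correction, which the correction $\frac{G g_{ij}}{2(t_0-t)}$ exactly annihilates. The expander computation is formally the same with signs flipped, producing $A_{ij} = -\frac{\delta_{ij}}{2(t_0-t)} G^{\mathcal{E}}$ before correction, again canceled.

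Once the drift term is eliminated, each monotonicity identity collapses to
$$
\frac{d}{dt} \mathcal{F}^{\star}_{x_0,t_0}(\nabla_t, t) = -4(T - t)^2 \int_{\mathbb{R}^n} \left|D^* F - \tfrac{\nabla G \hook F}{G}\right|^2 G\, dV \leq 0
$$
(with the $(T-t)^2$ factor replaced by $1$ in the translator case), yielding monotonicity. The main place I expect to stumble is the expander, where the domain of interest $t > t_0$ reverses the usual convention $t < T$ under which Hamilton's formula is typically derived; it will be important to note that Corollary \ref{hamentromon} is a formal algebraic identity depending only on $G$ solving the backwards heat equation, independent of the sign of $T-t$, which is transparent from its derivation via Lemma \ref{varintFk}. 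Beyond this sign-bookkeeping issue the argument is routine calculation, and the underlying mechanism --- that $A_{ij}$ is a scalar multiple of $g_{ij}$ for each soliton-type kernel, killed by the correction --- is directly parallel to the way Huisken's monotonicity for mean curvature flow works.
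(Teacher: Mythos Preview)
Your proposal is correct and follows essentially the same approach as the paper: verify that each of the three kernels solves the backwards heat equation, then compute $\nabla_i\nabla_j G - (\nabla_i G)(\nabla_j G)/G$ in each case and check that it either vanishes (translator) or is exactly cancelled by the $\frac{Gg_{ij}}{2(T-t)}$ correction in Corollary~\ref{hamentromon} (shrinker and expander). Your remark that Corollary~\ref{hamentromon} is a formal algebraic identity valid regardless of the sign of $T-t$ is the right way to handle the expander case, and the paper treats this point the same way (implicitly).
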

\subsection{Entropy and basic properties of shrinkers}

Provided the definition of the $\mathcal{F}$-functional \eqref{eq:monforms}
given in \S \ref{monentropRn}, we set $G_0 := (4 \pi t_0)^{-n/2}
e^{-\frac{|x-x_0|^2}{4t_0}}$, and have
\begin{equation*}
\mathcal{F}_{x_0,t_0}(\nab) :=  \mathcal{F}_{x_0,t_0}(\nab, 0) = t_0^2
\int_{\bRn}{|F_{\nab}|^2 G_0 dV}.
\end{equation*}

\begin{defn} For a connection $\N$ the \textit{entropy} is given by
\begin{equation*}
\la(\nab) = \sup_{x_0 \in \mathbb{R}^n, t_0 > 0} \mathcal{F}_{x_0,t_0}(\nab).
\end{equation*}
\end{defn}

\begin{defn} Let $\N_t$ be a smooth one-parameter family of connections on
$\mathbb R^n \times (- \infty, 0)$. Then $\N_t$ is a \emph{self-similar
solution} if
\begin{equation}\label{def:selfsimsol}
D_{\N_t}^*F_{\N_t} - \frac{x }{2t} \hook F_{\N_t}= 0.
\end{equation}
\end{defn}

\begin{prop}\label{prop:sssiffgtfm} Suppose $\N_t$ is a self-similar solution to
Yang-Mills flow on $\mathbb{R}^n \times (- \infty, 0 )$. Then there exists an
exponential gauge for $\N$ such that the connection coefficients satisfy 
\begin{equation}\label{eq:gGscalsqrtt}
\gG(x,t) = \tfrac{1}{\sqrt{-t}} \gG \left( \tfrac{x}{\sqrt{-t}}, -1 \right).
\end{equation}
\end{prop}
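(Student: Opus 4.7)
The strategy is to pass to an exponential (radial) gauge centered at the origin, in which parabolic dilation acts compatibly with the gauge condition and the self-similar equation reduces to a first-order linear transport equation whose solutions exhibit the desired scaling.

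First I would fix an exponential gauge at $t = -1$, i.e.\ a trivialization satisfying $x^i \gG_i(x, -1) = 0$, obtained by parallel-transporting a chosen frame at the origin along rays. The key observation is that this condition is automatically preserved under Yang-Mills flow on a self-similar solution: substituting \eqref{def:selfsimsol} into the flow equation $\partial_t \gG_i = -(D^*F)_i$ gives $\partial_t \gG_i = -\tfrac{x^j}{2t} F_{ji}$, so
\begin{align*}
\partial_t(x^i \gG_i) = -\frac{x^i x^j}{2t} F_{ji} = 0
\end{align*}
by antisymmetry of $F$ in $(i,j)$. Hence $x^i \gG_i(x, t) \equiv 0$ for all $t \in (-\infty, 0)$ in the trivialization chosen at $t = -1$.

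Second, differentiating $x^i \gG_i = 0$ with respect to $x^k$ yields $\gG_k = -x^i \partial_k \gG_i$, and using $[x^j \gG_j, \gG_i] = 0$ one has
\begin{align*}
x^j F_{ji} = x^j\bigl(\partial_j \gG_i - \partial_i \gG_j + [\gG_j, \gG_i]\bigr) = x^j \partial_j \gG_i + \gG_i.
\end{align*}
Combining this with $\partial_t \gG_i = -\tfrac{x^j}{2t} F_{ji}$ produces the linear transport equation
\begin{align*}
\partial_t \gG_i + \frac{1}{2t}\bigl(x^j \partial_j \gG_i + \gG_i\bigr) = 0.
\end{align*}

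Finally, the characteristic curves of this equation are $x = c\sqrt{-t}$ for constant $c$, along which $\sqrt{-t}\,\gG_i$ is constant. Equivalently, under the change of variables $y = x/\sqrt{-t}$ one checks that $\sqrt{-t}\,\gG_i(\sqrt{-t}\,y, t)$ is independent of $t$ at fixed $y$; evaluating at $t = -1$ and substituting back gives \eqref{eq:gGscalsqrtt}. The only non-obvious point is the first step—maintaining exponential gauge simultaneously for all times while preserving the flow equation $\partial_t \gG = -D^*F$ without a compensating gauge correction—and the antisymmetry argument above shows that this correction vanishes identically for self-similar solutions, after which the remainder of the argument is a direct application of the method of characteristics.
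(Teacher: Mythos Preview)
Your proof is correct and follows the same overall architecture as the paper: both arguments pass to an exponential gauge, reduce the self-similar equation to the linear transport equation
\[
\partial_t \gG_i + \tfrac{1}{2t}\bigl(x^j\partial_j\gG_i + \gG_i\bigr) = 0,
\]
and then show that its solutions obey the parabolic scaling law. The differences are in the technical execution of two steps. First, the paper defers the reduction to the transport equation to a cited lemma of Weinkove, whereas you supply the argument directly, including the observation that the radial gauge condition $x^i\gG_i=0$ is preserved along the flow because $\partial_t(x^i\gG_i) = -\tfrac{x^ix^j}{2t}F_{ji}=0$; this makes your write-up more self-contained. Second, and more substantively, the paper establishes the scaling law by comparing with a known scaling solution and invoking Holmgren's uniqueness theorem, while you integrate the transport equation directly via the method of characteristics. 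Your route is more elementary and constructive---Holmgren is considerable overkill for a first-order linear equation with smooth coefficients on $t<0$---though the paper's approach has the minor advantage of not requiring one to identify the characteristic curves explicitly.
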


The exponential gauge is unique up to initial choice of frame at $x=0$.  This
proposition is a consequence of two lemmas from \cite{Weinkove}. For the first
statement we refer the reader directly to the text.

\begin{lemma}[end of Theorem 3.1 \cite{Weinkove}, pp.8]\label{equivDE} A
solution $\N_t$ to Yang-Mills flow is furthermore a solution to the differential
equation
\begin{equation}\label{eq:equivDE1}
\frac{\del \N_t}{\del t}  + \frac{x}{2t} \hook F_{\N_t} = 0,
\end{equation}
if and only if in an exponential gauge the connection matrices satisfy
\begin{equation}\label{eq:equivDE2}
\frac{\del \gG_t}{\del t} + \left( \frac{x}{2 t} \hook \del \gG_t \right) +
\frac{1}{2t} \gG_t =0.
\end{equation}
\end{lemma}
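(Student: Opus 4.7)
The plan is to convert the equation $\frac{\partial \nabla_t}{\partial t} + \frac{x}{2t} \hook F_{\nabla_t} = 0$ into a local-coordinate PDE for the connection matrices $\Gamma_t$, using the defining property of the exponential (radial) gauge in an essential way. Recall that an exponential gauge centered at the origin of $\mathbb R^n$ is obtained by parallel transport of a fixed frame along rays from $0$, and is characterized algebraically by the identity
\begin{equation*}
x^i \Gamma_i(x) = 0 \text{ for all } x \in \mathbb R^n.
\end{equation*}
The two consequences of this relation that drive the proof are obtained by differentiation and bracketing:
\begin{equation*}
x^i \partial_j \Gamma_i = -\Gamma_j, \qquad x^i [\Gamma_i,\Gamma_j] = [x^i \Gamma_i,\Gamma_j] = 0.
\end{equation*}

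With these identities in hand, the computation of the interior product of the radial vector with curvature collapses:
\begin{equation*}
\left(\tfrac{x}{2t} \hook F_{\nabla}\right)_j = \tfrac{x^i}{2t}\bigl(\partial_i \Gamma_j - \partial_j \Gamma_i + [\Gamma_i,\Gamma_j]\bigr) = \tfrac{1}{2t}\bigl(x^i \partial_i \Gamma_j\bigr) + \tfrac{1}{2t}\Gamma_j,
\end{equation*}
where the first identity eliminates the $\partial_j \Gamma_i$ term and the second eliminates the bracket. Recognizing $\frac{x}{2t}\hook\partial\Gamma_t$ as the tensor $\frac{x^i}{2t}\partial_i\Gamma$, the equation $\frac{\partial \nabla_t}{\partial t} + \frac{x}{2t}\hook F_{\nabla_t}=0$ becomes, in exponential gauge, exactly the stated ODE \eqref{eq:equivDE2}. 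Both implications follow by reading the computation in either direction.

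The main obstacle I expect is not the identity itself but the bookkeeping for \emph{time-dependent} gauges. Yang-Mills flow need not preserve the radial gauge slice-by-slice, so one must introduce a one-parameter family of gauge transformations $g_t$ producing $\Gamma_t$ in exponential gauge at every $t$. This adds a pure-gauge term of the form $(\partial_t g_t)g_t^{-1}$ to $\frac{\partial \Gamma_t}{\partial t}$, and one must verify that this term is compatible with (or absorbed into) the equation, by checking that the infinitesimal gauge transformation preserving the exponential condition vanishes at $x=0$ and is uniquely determined by the initial frame choice noted just before the lemma. Once this compatibility is checked, the equivalence reduces to the algebraic calculation above; this is the part of the argument whose details properly belong to \cite{Weinkove}.
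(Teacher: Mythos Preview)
The paper does not prove this lemma; it simply refers the reader to \cite{Weinkove}. Your argument is correct and is essentially the computation one finds there: the radial-gauge identity $x^i\Gamma_i=0$, together with its consequences $x^i\partial_j\Gamma_i=-\Gamma_j$ and $x^i[\Gamma_i,\Gamma_j]=0$, collapses $\tfrac{x}{2t}\hook F$ to $\tfrac{x}{2t}\hook\partial\Gamma+\tfrac{1}{2t}\Gamma$, and the equivalence is then immediate in both directions.

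Your concern about time-dependent gauges has a clean resolution worth recording, since you flagged it as the main obstacle. Fix a \emph{single} trivialization that is exponential for $\nabla_{t_0}$ at some reference time. In that fixed trivialization, \eqref{eq:equivDE1} reads $\partial_t\Gamma+\tfrac{x}{2t}\hook F=0$; contracting with $x$ gives
\[
\partial_t\bigl(x^i\Gamma_i\bigr)=x^i\partial_t\Gamma_i=-\tfrac{1}{2t}\,x^ix^jF_{ji}=0
\]
by the antisymmetry of $F$. Hence the radial condition $x^i\Gamma_i=0$ is \emph{preserved} by the flow \eqref{eq:equivDE1}, so the same fixed trivialization is exponential for every $t$ and no $(\partial_t g_t)g_t^{-1}$ term ever enters. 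This is why the statement can speak of ``an exponential gauge'' rather than a one-parameter family, and it makes your algebraic reduction a complete proof rather than a heuristic.
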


The second lemma demonstrates the equivalences of a characteristic scaling law
of connections with \eqref{eq:equivDE2}. We will include a detailed proof for
convenience.

\begin{lemma}[\cite{Weinkove} Lemma 3.2]\label{wnkovescallawsoliton} Let the
one-parameter family $\N_t$ with connection coefficient matrices $\gG_t$ be a
solution to Yang-Mills flow. Then $\N_t$ satisfies
\begin{equation}\label{selfsimseqde}
\frac{\del \gG_t}{\del t} + \left( \frac{x}{2 t} \hook \del \gG_t \right) +
\frac{1}{2t} \gG_t =0,
\end{equation}
on $\mathbb{R}^n \times (-\infty, 0)$, if and only if for all $\la \neq 0$,
\begin{equation}\label{solitonscallaw}
\gG (x,t) = \la \gG (\la x , \la^2 t).
\end{equation}

\begin{proof}
Assuming $\N$ satisfies \eqref{solitonscallaw}, we differentiate
\eqref{solitonscallaw} with respect to $\la$ and then evaluate at $\la = 1$:
\begin{align*}
0 & = \left. \frac{\del}{\del \la} \left(  \la \gG ( \la x , \la^2 t ) -
\gG(x,t) \right) \right|_{\la=1}\\
&= \left. \gG +  \la^2 x^k  \del_k \gG + 2 \la t \del_t \gG \right|_{\la = 1}\\
&=  \gG +  x^k  \del_k \gG + 2 t \del_t \gG.
\end{align*}
Dividing by $2t$ produces the desired result.
Next, we show that a solution of \eqref{selfsimseqde} must consequently satisfy
the scaling law \eqref{solitonscallaw}. To do so, we let $\widetilde {\N}$ be a
solution to Yang-Mills flow satisfying \eqref{solitonscallaw}, so that
$\widetilde{\N}$ is a solution to \eqref{selfsimseqde} with connection
coefficient matrix $\gG$, and let $\N$ be yet another solution to
\eqref{selfsimseqde} with connection coefficient matrix $\widetilde{\gG}$. Set
$\Upsilon_t := \gG_t - \widetilde{\gG}_t$. Note that for each $t$, $\Upsilon_t$
is in the kernel of the following operator
\begin{equation*}
\Phi : \Lambda^1(\End E) \to \Lambda^1(\End E) : B \mapsto \frac{\del B}{\del t}
+ \left( \frac{x}{2t} \hook \del B \right).
\end{equation*}
We first verify that for any $s \in (- \infty, 0)$ the hypersurface
$\mathbb{R}^n \times \{ s \}$ is non-characteristic with respect to the operator
$\Phi$. This is equivalent to showing that the symbol is non degenerate in the
transverse direction of the boundary of $\mathbb{R}^n \times \{ s \}$, that is,
that
\begin{equation*}
\langle \sigma[\Phi], \del_t \rangle \neq 0.
\end{equation*}
Given that
\begin{equation*}
(\sigma[\Phi](B))_{k \ga}^{\gb} = \xi_t B + \left( \tfrac{x}{2t} \hook \xi_{x} B
\right),
\end{equation*}
then we have that
\begin{equation*}
\langle \sigma[\Phi], \xi_t \rangle = |\xi_t|^2 \neq 0.
\end{equation*}
Thus, by Holmgren's Uniqueness Theorem (cf. \cite{Taylor} pg. 433), there exists
some $\epsilon >0$ such that on $\mathbb{R}^n \times [ s - \epsilon, s +
\epsilon]$, we have $\Phi(\Upsilon) = 0$.
This demonstrates openness of the set
\begin{equation*}
\mathcal{T} := \left\{ \theta \in (-\infty,0) : \Upsilon_{\theta} = 0 \right\}.
\end{equation*}
Since this set is closed (the inverse image of zero under a continuous map) by
the connectedness of $(- \infty, 0)$, then $\mathcal{T} =(- \infty, 0)$ so we
have $\gG_t = \widetilde {\gG}_t$, as desired. The result follows.
\end{proof}
\end{lemma}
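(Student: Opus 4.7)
The lemma asserts the equivalence of an infinitesimal form of self-similarity (a first-order linear PDE) and a global form (a one-parameter scaling symmetry). My plan is to treat each implication separately: the forward implication by a one-line chain rule, and the reverse by the method of characteristics applied to \eqref{selfsimseqde}.

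For the direction from scaling to PDE, the plan is simply to differentiate $\Gamma(x,t) = \lambda\,\Gamma(\lambda x, \lambda^2 t)$ in $\lambda$ and evaluate at $\lambda = 1$. The left-hand side is $\lambda$-independent, so the chain rule applied to the right-hand side yields
\begin{equation*}
0 \;=\; \Gamma(x,t) + x^k\,\partial_k \Gamma(x,t) + 2t\,\partial_t \Gamma(x,t).
\end{equation*}
Dividing by $2t$ and recognizing $x^k \partial_k \Gamma$ as $x \hook \partial \Gamma$ recovers \eqref{selfsimseqde} exactly.

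For the converse, I would view \eqref{selfsimseqde} as a linear first-order transport equation with a zeroth-order term and apply the method of characteristics. The characteristic vector field on $\mathbb R^n \times (-\infty,0)$ is $\partial_t + \tfrac{x^k}{2t}\partial_k$, and a direct ODE computation shows that $\xi := x/\sqrt{-t}$ is conserved along its integral curves, so the curves are labeled by their value of $\xi$. Along each such curve, \eqref{selfsimseqde} collapses to the scalar ODE $\dot\Gamma = -\tfrac{1}{2t}\Gamma$, whose first integral is $\sqrt{-t}\,\Gamma$. Combining the two conserved quantities yields $\Gamma(x,t) = \tfrac{1}{\sqrt{-t}}\,\Gamma\!\left(x/\sqrt{-t},\,-1\right)$, which substituting into $\lambda\,\Gamma(\lambda x, \lambda^2 t)$ verifies the scaling identity for all $\lambda > 0$, and hence by composition for every admissible $\lambda$.

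The main point requiring care is the degeneracy of the coefficient $1/(2t)$ at $t=0$, which is precisely why the statement is restricted to $(-\infty,0)$ and why the characteristics remain smooth and foliate the strip nicely. A more abstract alternative, should the explicit integration along characteristics feel unsatisfying for matrix-valued $\Gamma$, would be to define $\widetilde\Gamma(x,t) := (-t)^{-1/2}\,\Gamma(x/\sqrt{-t},-1)$, observe via the forward direction that $\widetilde\Gamma$ satisfies \eqref{selfsimseqde} and agrees with $\Gamma$ on the noncharacteristic hypersurface $\{t=-1\}$, and then invoke a uniqueness theorem for first-order linear PDEs (e.g.\ Holmgren) to conclude $\Gamma \equiv \widetilde\Gamma$. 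I expect the characteristics argument to be the cleanest presentation, with the uniqueness route as a safety net if global regularity of the characteristic flow needs to be handled abstractly.
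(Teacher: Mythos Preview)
Your forward direction (differentiate in $\lambda$ at $\lambda=1$) is identical to the paper's. For the converse the paper takes essentially your ``safety net'' route: it posits a comparison solution satisfying the scaling law, notes that the difference lies in the kernel of the linear operator $\Phi(B)=\partial_t B + \tfrac{x}{2t}\hook\partial B$, checks that each time slice is noncharacteristic for $\Phi$, and invokes Holmgren's uniqueness theorem together with a connectedness argument. Your primary approach is genuinely different and more elementary: you integrate the transport equation explicitly along the characteristics $\xi=x/\sqrt{-t}$, obtaining the closed form $\Gamma(x,t)=(-t)^{-1/2}\Gamma(x/\sqrt{-t},-1)$ directly, from which the scaling identity is immediate. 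This buys you a constructive argument that avoids the Holmgren machinery entirely and makes the structure of the solution transparent; the paper's abstract uniqueness route, by contrast, never writes down the explicit form and is somewhat opaquely presented (the comparison solution is introduced without saying how it is built or where it agrees with $\Gamma$).

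One small point: your remark that the $\lambda<0$ case follows ``by composition'' from $\lambda>0$ is not right, since composing positive scalings never produces a negative one; the identity $\Gamma(x,t)=-\Gamma(-x,t)$ does not follow from \eqref{selfsimseqde} alone. In practice only $\lambda>0$ is ever used (the paper applies the lemma with $\lambda=1/\sqrt{-t}$), so this is a cosmetic issue with the stated range of $\lambda$ rather than with your argument.
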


\begin{proof}[Proof of Proposition \ref{prop:sssiffgtfm}]
Define $\N_t$ to be a family of connections which are furthermore solutions to
Yang-Mills flow with coefficient matrices which satisfy
\begin{equation}
\gG(x,t) := \left( \tfrac{1}{\sqrt{-t}} \right) \gG \left( 
\tfrac{x}{\sqrt{-t}}, -1 \right).
\end{equation}
We verify that $\gG(x,t)$ satisfies the scaling law \eqref{solitonscallaw} by
computation:
\begin{align*}
\la \gG( \la x, \la^2 t  ) & = \la \tfrac{1}{\sqrt{- \la^2 t}} \left( \tfrac{\la
x}{\sqrt{- \la^2 t}}, -1 \right)\\
&=  \tfrac{1}{\sqrt{-t}} \gG \left(  \tfrac{x}{\sqrt{-t}}, -1 \right) \\
&= \gG(x,t).
\end{align*}
Thus the scaling law holds. It therefore follows by Lemmas \ref{equivDE} and
\ref{wnkovescallawsoliton} that this is equivalent to $\N_t$ satisfying
\eqref{eq:equivDE1}. But since $\N_t$ is a solution to Yang-Mills flow, we
conclude that
\begin{equation}\label{eq:solitonequiv1}
D_{t}^*F_{t} = \frac{x}{2t} \hook F_{t}.
\end{equation}
The result follows.
\end{proof}

\begin{defn}\label{def:soliton} A connection $\nab$ is a \textit{soliton} if,
for all $x \in \bRn $,
\begin{equation}\label{eq:solitondef}
D^*_{\nab}F_{\nab} + \frac{x}{2} \hook F_{\nab} = 0.
\end{equation}
\end{defn}
This definition captures the notion of a self-similar solution by considering
the $t=-1$ slice.  As exhibited in \cite{Weinkove}, all type I singularities of
Yang-Mills flow admit blowup solutions which are nontrivial solitons, thus their
study is central to understanding singularity formation of the flow.  In this
section and the next we collect a number of observations concerning solitons and
their structure.  First, we observe that solitons can be interpreted as
Yang-Mills connections for a certain conformally modified metric on $\mathbb
R^n$.

\begin{prop} \label{confchange} Suppose $\N$ is a soliton on $\mathbb R^n, n
\geq 5$.  Then $\N$ is a Yang-Mills connection with respect to the metric
$g_{ij} = e^{- \frac{\brs{x}^2}{2(n-4)}} \gd_{ij}$.
\begin{proof} A calculation shows that for a connection $\N$, Riemannian metric
$g$ and function $\phi$, one has
\begin{align*}
- D^*_{e^{2\phi} g} F = e^{-2\phi} \left[ - D^*_{g} F + (n-4) \N \phi \hook F
\right].
\end{align*}
With the choice $\phi = - \frac{\brs{x}^2}{4(n-4)}$, comparing against
(\ref{eq:solitondef}) yields the result.
\end{proof}
\end{prop}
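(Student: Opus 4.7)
The plan is direct: verify that the soliton equation \eqref{eq:solitondef} is exactly the Yang-Mills equation $D^*_{\tilde g} F = 0$ for the conformally rescaled metric $\tilde g = e^{2\phi} \delta$, for the specific choice $\phi = -\frac{|x|^2}{4(n-4)}$. The content of the proposition is the conformal transformation law for the codifferential on a 2-form.

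First I would establish the identity
\begin{equation*}
-D^*_{\tilde g} F = e^{-2\phi}\left[-D^*_g F + (n-4)\N \phi \hook F\right]
\end{equation*}
for any connection $\N$, metric $g$, and smooth $\phi$ on an $n$-manifold, with $\tilde g = e^{2\phi} g$ and $F \in \Lambda^2(\End E)$. Because the exterior covariant derivative $D$ does not depend on the metric, the metric dependence of $D^*$ lives entirely in the Hodge star, which satisfies $\tilde{*}_p = e^{(n-2p)\phi} *_p$ on $p$-forms. Writing $D^* = \pm * D *$, applying Leibniz to $D$ acting on $\tilde * F = e^{(n-4)\phi} * F$, and then hitting the resulting $(n-1)$-form with $\tilde * = e^{(2-n)\phi} *$, the exponential prefactors collapse to $e^{(n-4)\phi} \cdot e^{(2-n)\phi} = e^{-2\phi}$. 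The 2-form identity $*(d\phi \wedge * F) = \pm \N\phi \hook F$ converts the contribution from the Leibniz term into the $(n-4)\N\phi \hook F$ expression above. Since the $\End E$-coefficients are inert under the purely Riemannian conformal change, the bundle-valued case reduces mechanically to the scalar 2-form computation.

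Second, for the specific choice $\phi = -\frac{|x|^2}{4(n-4)}$ on $(\mathbb R^n, \delta)$, one computes $\N\phi = -\frac{x}{2(n-4)}$, so $(n-4)\N\phi \hook F = -\frac{x}{2} \hook F$. Substituting into the conformal formula and invoking the soliton equation \eqref{eq:solitondef},
\begin{equation*}
-D^*_{\tilde g} F = -e^{-2\phi}\left[D^*_g F + \tfrac{x}{2} \hook F\right] = 0,
\end{equation*}
which is exactly the Yang-Mills equation for $\tilde g$.

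The main obstacle is the careful bookkeeping of signs and dimension-dependent exponents in the conformal formula for $D^*$; once that identity is in hand the substitution is automatic. It is worth noting that the factor $(n-4)$ in the conformal formula is precisely why $n \geq 5$ is required: in dimension four the first-order term disappears (reflecting the conformal invariance of Yang-Mills on four-manifolds), so the $\frac{x}{2}\hook F$ correction in the soliton equation cannot be absorbed by any conformal factor, consistent with Proposition \ref{fourdprop}.
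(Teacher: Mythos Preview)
Your proposal is correct and follows exactly the same approach as the paper: establish the conformal transformation formula for $D^*$ on $2$-forms, then plug in $\phi = -\frac{|x|^2}{4(n-4)}$ and compare with the soliton equation. You supply more detail than the paper (which simply asserts the conformal identity) by sketching the Hodge-star derivation, and your concluding remark tying the $(n-4)$ factor to four-dimensional conformal invariance is a nice addition.
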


Next we establish a number of preliminary properties of solitons and Yang-Mills
flow blowups in preparation for understanding the variational properties of the
$\FF$-functional.  We will use these to show in Corollary \ref{Fcrit} below that
solitons are, after reparameterizing in space and time, the critical points of
the $\FF$-functional.  First though we consider a more general notion of
soliton.
 \begin{defn} The \textit{$(x_0,t_0)$-soliton operator} is given by
\begin{align*}
S_{x_0,t_0} :\ \mathcal{A}_E \to \Lambda^1(\End E) : \ \nab \mapsto
D_{\nab}^*F_{\nab} + \frac{(x - x_0) }{2t_0}\hook F_{\nab}.
\end{align*}
 \end{defn}

\begin{defn}\label{X0T0solitondef} Suppose $\N \in \ker S_{x_0,t_0}$ so that
\begin{equation}\label{eq:X0T0solitondef}
D^*_{\N}F_{\N} + \frac{(x-x_0)}{2 t_0} \hook F_{\N} = 0.
\end{equation}
Then $\N$ is called a \textit{$(x_0,t_0)$-soliton}.
\end{defn}

We next demonstrate the correspondence between the set of $(0,1)$-solitons,
denoted by $\mathfrak{S}$, and the set of $(x_0,t_0)$-solitons, denoted by
$\mathfrak{S}_{x_0,t_0}$.

\begin{lemma}\label{sssaresolitons}
For all $x_0 \in \mathbb{R}^n$ and $t_0 \in \mathbb{R}$, the sets
$\mathfrak{S}_{x_0,t_0}$ and $\mathfrak{S}$ are in bijective correspondence.

\begin{proof}
Beginning with $\nab \in \mathfrak{S}$, we set
\begin{align*}
\widetilde {\N} \left( x \right)  &:= \frac{1}{\sqrt{t_0}} \N\left(
\frac{x-x_0}{\sqrt{t_0}}\right).
\end{align*}
Computation yields
\begin{align*}
\left[ D_{\widetilde {\N}} F_{\widetilde {\N}} \right]_{x}
&= \frac{1}{t_0^{3/2}}\left[  D_{\N} F_{\N}\right]_{\frac{x - x_0}{\sqrt{t_0}}}
\\
&= \frac{1}{t_0^{3/2}} \left( \frac{x_0-x}{2 \sqrt{t_0}} \right) \hook \left[
F_{\N} \right]_{\frac{x - x_0}{\sqrt{t_0} }} \\
& =  \left( \frac{x_0-x}{2 t_0} \right) \hook\left( \frac{1}{t_0} \left[ F_{\N}
\right]_{\frac{x - x_0}{\sqrt{t_0}}} \right) \\
& =  \left( \frac{x_0-x}{2 t_0} \right) \hook \left[ F_{\widetilde {\N}}
\right]_{x}.
\end{align*}
Conversely given $\widetilde {\N} \in \mathfrak{S}_{x_0,t_0}$, we define $\N(x)
:=\sqrt{t_0} \widetilde {\N}\left( x_0 + \sqrt{t_0} x \right)$.  A similar
calculation shows that $\N \in \mathfrak{S}$.  The result follows.
\end{proof}
\end{lemma}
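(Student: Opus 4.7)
The plan is to exhibit a natural bijection $\nab \leftrightarrow \widetilde{\nab}$ suggested by the self-similar scaling law of Lemma \ref{wnkovescallawsoliton}. Given $\nab \in \mathfrak{S}$, define
\begin{equation*}
\widetilde{\nab}(x) := \frac{1}{\sqrt{t_0}}\,\nab\!\left(\frac{x - x_0}{\sqrt{t_0}}\right),
\end{equation*}
and in the reverse direction, given $\widetilde{\nab} \in \mathfrak{S}_{x_0,t_0}$, define $\nab(y) := \sqrt{t_0}\,\widetilde{\nab}(x_0 + \sqrt{t_0}\, y)$. These two assignments are manifestly mutually inverse by direct substitution, so the only substantive task is to verify that the forward map lands in $\mathfrak{S}_{x_0,t_0}$; the reverse inclusion follows by the same calculation with $x_0, t_0$ replaced by $0,1$.

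For the forward direction, I would express $F_{\widetilde{\nab}}(x)$ and $D^*_{\widetilde{\nab}} F_{\widetilde{\nab}}(x)$ in terms of the corresponding quantities for $\nab$ evaluated at the rescaled point $y := (x - x_0)/\sqrt{t_0}$. Since $F$ is built from one derivative plus a quadratic term in the connection coefficient matrix \eqref{eq: Fcoords}, both pieces scale identically and yield $F_{\widetilde{\nab}}(x) = t_0^{-1} F_{\nab}(y)$. A second application of the chain rule then gives $D^*_{\widetilde{\nab}} F_{\widetilde{\nab}}(x) = t_0^{-3/2} D^*_{\nab} F_{\nab}(y)$. Inserting the soliton equation $D^*_{\nab} F_{\nab}(y) = -\tfrac{y}{2} \hook F_{\nab}(y)$ and rewriting $y$ in terms of $x$ produces precisely equation \eqref{eq:X0T0solitondef} for $\widetilde{\nab}$.

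There is no serious obstacle here; the argument reduces to careful bookkeeping of scaling weights under the affine map $x \mapsto (x-x_0)/\sqrt{t_0}$. The only consistency check is that the factor $1/\sqrt{t_0}$ placed on the connection is exactly what forces $F$ to acquire the clean factor $1/t_0$, so that the interior product with $(x-x_0)/(2t_0)$ and the divergence $D^*_{\widetilde{\nab}} F_{\widetilde{\nab}}$ carry the same overall power of $t_0$ and the soliton equation rebalances. This matches the scaling \eqref{solitonscallaw} already verified in Lemma \ref{wnkovescallawsoliton}, so no additional gauge-theoretic input is required.
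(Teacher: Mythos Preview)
Your proposal is correct and follows essentially the same approach as the paper: both define the forward map $\widetilde{\nabla}(x) = t_0^{-1/2}\nabla((x-x_0)/\sqrt{t_0})$ and its inverse, then verify the $(x_0,t_0)$-soliton equation by computing the scaling weights $F_{\widetilde{\nabla}} = t_0^{-1}F_{\nabla}$ and $D^*_{\widetilde{\nabla}}F_{\widetilde{\nabla}} = t_0^{-3/2}D^*_{\nabla}F_{\nabla}$ at the rescaled point. Your version is slightly more explicit about the maps being mutually inverse, but otherwise the arguments coincide.
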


\subsection{Polynomial energy growth}

In the computations to follow deriving the first and second variation of entropy
we integrate by parts and encounter many quantities whose integrability is not
immediately clear.  For this reason we will add an extra condition to the
solitons we consider, namely that of ``polynomial energy growth,'' made precise
below.  We give a formal argument in Proposition \ref{pegprop} showing that
blowup limits of Yang-Mills flow, should automatically satisfy this hypothesis. 
Moreover, for the more delicate analytic arguments we require the curvature
itself to be pointwise bounded by some polynomial function.  The type I blowup
limits constructed in \cite{Weinkove} have bounded curvature and so
automatically satisfy this hypothesis.

\begin{defn} A connection $\nab$ on $\mathbb R^n$ has \textit{polynomial energy
growth} about $y \in \bRn$ if there exists a polynomial $p$ such that
\[ \int_{B_{y}(r)}{ |F_{\N}|^2 dV} \leq p(r). \]
\end{defn}

\begin{defn} A connection $\nab$ on $\mathbb R^n$ has \textit{polynomial
curvature growth} if there exists a polynomial $p$ such that for all $x \in
\mathbb R^n$, one has $\brs{F_{\N}}(x) \leq p(r(x))$.
\end{defn}

\begin{lemma}\label{Fballbd}
Let $\nab_t \in \mathcal{A}_E \times [0,T)$ be some solution to Yang-Mills flow
on $(M^n, g)$, with $n \geq 4$.  Given $t_1 \in [0,T)$, there exists $R > 0$ and
$C = C(t_1, g)$ such that for all $x_0 \in M$, $t \in [t_1,T)$ and $r \leq R$ we
have
\begin{equation*}
\int_{B_{x_0}(r)}{|F_{\nab_t}|^2  dV_g} \leq C e^{1/4} \mathcal{YM}(\nab_0)
r^{n-4} t_1^{\frac{4-n}{2}}.
\end{equation*}

\begin{proof}
Let $\rho(x,y)$ denote the distance function on $M$ between $x$ and $y$, and let
$G^M_0$ denote the heat kernel of the manifold $M$ with respect to the metric
$g$ based at the center point $x_0$ at time $t_0$.  First, using Proposition
\ref{monform} and then appealing to a Euclidean-type heat kernel upper bound,
see for instance (\cite{Li} Theorem 13.4) we obtain, for any $t < t_0$, for some
$C_1$ dependent on $T$ and $M$ coming from Theorem \ref{thm:hammonoform} (which
introduces the $C_M$),
\begin{align}
\begin{split}\label{eq:upprbd}
(t_0 - t)^2 \int_{B_{x_0}(r)} |F_{t}|^2 G^M_{0}(x,t) dV_g 
&\leq (t_0 - t)^2 \int_{M} |F_{t}|^2 G^M_{0}(x,t) dV_g\\
&\leq C_M t_0^2 \int_{M} |F_{0}|^2 G^M_{0}(x,0) dV_g + C_M t_0^2
\mathcal{YM}(\N_0)\\
&\leq C_1 t_0^{\frac{4-n}{2}} \mathcal{YM}(\nab_0).
\end{split}
\end{align}
Also, appealing to a local Euclidean-type heat kernel lower bound, (\cite{Li}
Theorem 13.8) we have, for sufficiently small $R$ and all $r \leq R$,
\begin{align}
\begin{split}\label{eq:lwrbd}
(t-t_0)^2 \int_{B_{x_0}(r)}{|F_{\nab}|^2 G_{0}^M dV_g } & \geq  C_2 (t -t_0)^2
\int_{B_{x_0}(r)}{|F_t|^2 (4 \pi (t_0 - t))^{-\frac{n}{2}}
e^{-\frac{\rho(x,x_0)^2}{4(t_0-t)}} dV_g}\\
&= C_2 (t -t_0)^{\frac{4 -n }{2}} e^{- \frac{r^2}{4(t_0-t)}} \int_{B_{x_0}(r)}{
|F_{t}|^2 dV_g }.
\end{split}
\end{align}
By combining inequalities \eqref{eq:upprbd} and \eqref{eq:lwrbd} and setting $C
:= \tfrac{C_1}{C_2}$ we have
\begin{align}
\begin{split}\label{eq:PEGeq1}
\int_{B_{x_0}(r)}{ |F_{t}|^2 dV_g } & \leq C (t -t_0)^{\frac{n - 4}{2}}
(t_0)^{\frac{4-n}{2}} e^{\frac{r^2}{4(t_0-t)}} \mathcal{YM} (\nab_0) \\
& =C e^{\frac{r^2}{4(t_0-t)}} \left( 1- \frac{t}{t_0} \right)^{\frac{n-4}{2}}
\mathcal{YM}(\nab_0).
\end{split}
\end{align}
Now take $ t_0 = t + r^2$ and observe that 
\begin{align}\label{eq:PEGeq2}
\begin{split}
\left( 1- \frac{t}{t + r^2} \right)^{\frac{n-4}{2}} &= \left( \frac{r^2}{t +
r^2} \right)^{\frac{n-4}{2}}\\
& = \left( {r^2}\right)^{\frac{n-4}{2}} \left( {t + r^2} \right)^{ -
\frac{n-4}{2}}\\
& \leq r^{n-4} t^{ - \frac{n-4}{2}}\\
& \leq r^{n-4}  (t_1)^{ \frac{4-n}{2}}.
\end{split}
\end{align}
Applying \eqref{eq:PEGeq2} in \eqref{eq:PEGeq1} we conclude
\begin{equation}
\int_{B_{x_0}(r)}{ |F_{t}|^2 dV_g } \leq C e^{1/4}  \mathcal{YM}(\nab_0) r^{n-4}
t_1^{\frac{4-n}{2}}.
\end{equation}
The result follows.
\end{proof}
\end{lemma}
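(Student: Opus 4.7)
The plan is to exploit Hamilton's monotonicity formula (Theorem \ref{thm:hammonoform}) by placing a backward heat kernel $G_0^M$ centered at a nearby future point $(x_0, t_0)$ with $t_0 > t$, and then squeezing the desired local $L^2$ integral between a Gaussian heat kernel lower bound on $B_{x_0}(r)$ and a Gaussian heat kernel upper bound at the initial time $0$. This is the standard route for deriving Morrey-type curvature estimates from a parabolic monotonicity formula, and the critical scaling $r^{n-4}$ appears automatically because it is the scaling under which $\int |F|^2$ is invariant.

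First I would obtain the upper bound. Applying Theorem \ref{thm:hammonoform} on the interval $[0,t]$ with terminal time $t_0$ gives
\[ (t_0 - t)^2 \int_M |F_{\N_t}|^2 G_0^M(\cdot, t)\, dV_g \leq C_M\, t_0^2 \int_M |F_{\N_0}|^2 G_0^M(\cdot, 0)\, dV_g + C_M\, t_0^2\, \mathcal{YM}(\N_0). \]
A Euclidean-type upper bound on the heat kernel (Li, Theorem 13.4) yields $G_0^M(\cdot, 0) \leq C\, t_0^{-n/2}$, so the first term on the right is controlled by $C\, t_0^{(4-n)/2} \mathcal{YM}(\N_0)$, and for bounded $t_0$ this dominates the $\mathcal{YM}$ error, producing the bound of the form \eqref{eq:upprbd}.

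Next I would derive the matching lower bound on a small ball. For $r$ below the injectivity radius, a Euclidean-type Gaussian lower bound (Li, Theorem 13.8) gives $G_0^M(x, t) \geq C (t_0 - t)^{-n/2} e^{-\rho(x,x_0)^2/(4(t_0-t))}$ for $x \in B_{x_0}(r)$, so restricting the integral on the left of the monotonicity inequality to $B_{x_0}(r)$ gives a lower bound of the shape
\[ C\, (t_0 - t)^{(4-n)/2}\, e^{-r^2/(4(t_0-t))} \int_{B_{x_0}(r)} |F_{\N_t}|^2\, dV_g. \]
Combining these two inequalities and then specializing to $t_0 = t + r^2$ causes the exponential to collapse to $e^{1/4}$ and $(t_0 - t)^{(n-4)/2}$ to become exactly $r^{n-4}$; since $(4-n)/2 \leq 0$ and $t_0 \geq t \geq t_1$, the factor $t_0^{(4-n)/2}$ is controlled by $t_1^{(4-n)/2}$, completing the estimate.

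The main obstacle is purely geometric: on a general compact Riemannian manifold the Gaussian two-sided heat kernel bounds are only Euclidean-like at sufficiently small scales, so both the constant $C$ and the admissible radius $R$ must be constrained in terms of the geometry of $(M,g)$ (injectivity radius and a Ricci lower bound). This is what forces the conclusion to hold only for $r \leq R$ with $R = R(g)$, and why the constant depends on both $t_1$ and $g$; the hardest bookkeeping step is tracking how the constants from the two one-sided heat kernel bounds combine with $C_M$ from Hamilton's monotonicity to yield a single constant independent of $x_0$ and $t$.
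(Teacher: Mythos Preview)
Your proposal is correct and follows essentially the same route as the paper: apply Hamilton's monotonicity with a backward heat kernel centered at $(x_0,t_0)$, sandwich using the Gaussian upper bound (Li, Theorem 13.4) at time $0$ and the Gaussian lower bound (Li, Theorem 13.8) on $B_{x_0}(r)$, then set $t_0 = t + r^2$ and use $t_0 \geq t \geq t_1$ to close. The only cosmetic difference is that the paper rewrites $(t_0-t)^{(n-4)/2} t_0^{(4-n)/2}$ as $(1 - t/t_0)^{(n-4)/2}$ before bounding by $r^{n-4} t_1^{(4-n)/2}$, whereas you keep the two factors separate; the estimates are identical.
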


\begin{prop}\label{pegprop} Let $\nab_t$ be a solution to Yang-Mills flow on
$(M, g)$ which exists for $t \in [0,T)$.  Fix some local framing and let $\gG$
denote the coefficient matrix of $\nab$, and let $\nab^{r_i}_s(y)$ be the
connection with coefficient matrix $\gG_s^{r_i}(y) := r_i \gG(\exp_{x_0}(r_i y),
T + r_i^2 s)$.  Assume that $\nab^{r_i}_s(y)$ converges strongly on $M \times
(-\infty,0)$ as $r_i \to 0$ to a self-similar solution $\nab^{\infty}_s$. Then
for any $r>0$ we have
\begin{equation}
\int_{B_0(r)}{| F_{\nab^{\infty}}(y,-1)|^2 dy} \leq e^{1/4} \mathcal{YM} \left(
\frac{T}{2} \right)^{- \frac{n-4}{2}} r^{n-4}.
\end{equation}

\begin{proof}
For each $i \in \mathbb{N}$ the following equality holds
\begin{equation}
\int_{B_0(r)}{| F_{\nab^{r_i}}(y,s)|^2 dV_y} = \int_{B_{x_0}(r r_i)}{r_i^4 |
F_{\nab}( \exp_{x_0}(r_i x),{T + r_i^2 s})|^2 r_i^{-n} dV_x}.
\end{equation}
We look at the temporal slice $s=-1$ and choosing $i$ sufficiently large to
ensure that $r_i^2 \leq \frac{T}{2}$. Then applying Lemma \ref{Fballbd} we have
\begin{equation}
\int_{B_0(r)}{| F_{\nab^{r_i}}(y,-1)|^2 dV_y} \leq 2 e^{1/4}
\mathcal{YM}(\nab_0) \left( \frac{T}{2} \right)^{-\frac{n-4}{2}} r^{n-4},
\end{equation}
sending $i \to \infty$ yields the result.
\end{proof}
\end{prop}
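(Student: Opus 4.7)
The plan is to reduce the estimate on the blowup $\nab^{\infty}$ directly to Lemma \ref{Fballbd}, which already gives us the desired $r^{n-4}$-type energy concentration on small balls for the original Yang-Mills flow. The scaling defining $\nab^{r_i}$ was engineered precisely so that the integrals over $B_0(r)$ for the rescaled connection correspond, after change of variables, to integrals over balls $B_{x_0}(r_i r)$ of radius going to zero for the original flow. Everything then passes to the limit in $i$ using the strong convergence hypothesis.

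First I would record the scaling identity. Since $\gG_s^{r_i}(y) = r_i \gG(\exp_{x_0}(r_i y), T + r_i^2 s)$, we have $F_{\nab^{r_i}}(y,s) = r_i^2 F_{\nab}(\exp_{x_0}(r_i y), T + r_i^2 s)$, and therefore at the slice $s = -1$, the substitution $x = \exp_{x_0}(r_i y)$ yields
\begin{equation*}
\int_{B_0(r)} | F_{\nab^{r_i}}(y,-1)|^2 \, dV_y = r_i^{4-n} \int_{B_{x_0}(r_i r)} | F_{\nab}(x, T - r_i^2)|^2 \, dV_x,
\end{equation*}
up to a Jacobian factor arising from $\exp_{x_0}$ which is harmlessly bounded (by $2$, say) for all sufficiently small $r_i$.

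Next I would choose $i$ large enough that $r_i^2 \le T/2$ (so that the time $t := T - r_i^2$ lies in $[T/2, T)$) and $r_i r \le R$, where $R$ is the radius supplied by Lemma \ref{Fballbd}. That lemma, applied with $t_1 = T/2$, gives
\begin{equation*}
\int_{B_{x_0}(r_i r)} |F_{\nab}(x, T-r_i^2)|^2 \, dV_x \le C\, e^{1/4} \mathcal{YM}(\nab_0)\, (r_i r)^{n-4} \left(\tfrac{T}{2}\right)^{\frac{4-n}{2}}.
\end{equation*}
When combined with the scaling identity above, the powers of $r_i$ cancel exactly (this is the point of the rescaling being $n-4$-homogeneous in $r$), leaving a bound on $\int_{B_0(r)} | F_{\nab^{r_i}}(y,-1)|^2 \, dV_y$ of the desired form $e^{1/4} \mathcal{YM}(\nab_0) (T/2)^{-(n-4)/2} r^{n-4}$, independent of $i$.

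Finally I would pass to the limit $i \to \infty$. By the assumed strong convergence of $\nab^{r_i}_{-1}$ to $\nab^{\infty}_{-1}$ on compact subsets, the curvatures converge in $L^2_{\mathrm{loc}}$ (or one may simply invoke lower semicontinuity of the $L^2$ norm under strong convergence), so the left-hand side converges to $\int_{B_0(r)} |F_{\nab^{\infty}}(y,-1)|^2 dV_y$ and the uniform bound survives. I do not anticipate a significant obstacle: the whole argument is a bookkeeping exercise in scaling combined with Lemma \ref{Fballbd}. The one point that must be checked with care is the exact cancellation of the $r_i$ powers, which works precisely because the small-ball energy estimate has the correct homogeneity $r^{n-4}$ — the same critical exponent that makes the rescaled connections well-defined blowup objects in dimension $n \ge 4$.
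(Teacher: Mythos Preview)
Your proposal is correct and follows essentially the same route as the paper: establish the scaling identity relating $\int_{B_0(r)}|F_{\nab^{r_i}}|^2$ to $\int_{B_{x_0}(r_i r)}|F_{\nab}|^2$, apply Lemma \ref{Fballbd} with $t_1 = T/2$ at the time slice $s=-1$ so the $r_i^{n-4}$ factors cancel, and then pass to the limit using the strong convergence. Your version is slightly more explicit about the Jacobian of $\exp_{x_0}$, the radius restriction $r_i r \le R$, and the mode of convergence in the limiting step, but the argument is the same.
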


\section{Variational properties} \label{vars}

In this section we establish some fundamental variational properties of the
Yang-Mills entropy.  We begin by establishing first and second variation
formulas for the entropy functional $\FF$, including variations of the point in
spacetime.  We begin with some preliminary integration by parts formulas, then
use these to obtain the first and second variations.  These yield as corollaries
that solitons are characterized as critical points for the $\FF$-functional.  We
combine these calculations in \S \ref{ss:Ffunctnentropy} to establish that the
entropy is indeed achieved for a soliton with polynomial energy growth, realized
by the $\FF$-functional based at the basepoint of the given soliton.  Moreover
this point uniquely realizes the entropy, unless the soliton exhibits some flat
directions.

\subsection{Preliminary calculations}\label{s:prelimcalcs}

\begin{lemma} Let $\nab^{\infty}$ satisfy \eqref{eq:solitondef} with polynomial
energy growth. Then setting $\nab(x):= \frac{1}{\sqrt{t_0}}\nab^{\infty}\left(
\frac{x-x_0}{\sqrt{t_0}} \right)$, it holds that for all $\theta \geq 0$,
\begin{align}
\begin{split}\label{peg}
I_{\theta}(\N) := \int_{\mathbb{R}^n}{ |x-x_0|^{\theta} | F_{\nab} |^2
e^{\frac{-|x-x_0|^2}{4t_0}} dV} < \infty. 
\end{split}
\end{align} 

\begin{proof}
We observe that $\nab$ as defined above blows up at $(x_0,t_0)$, and, via change
of variables, satisfies
\begin{align*}
\int_{B_{x_0}(r)}{| F_{\N} |^2 dV} &=
\int_{B_{x_0}\left(\frac{r}{\sqrt{t_0}}\right)}{| F_{\nab^{\infty}}|^2
t_0^{\frac{n-2}{2}}dV} \\
& \leq p \left( \frac{r}{\sqrt{t_0}}\right) t_0^{\frac{n-2}{2}}.
\end{align*}
For each $r \in \mathbb{R}$, set $A_{x_0}(r) := B_{x_0}(r) / B_{x_0}(r-1)$. Then
partitioning $\mathbb{R}^n$ into a union of annuli yields
\begin{align*}
I_{\theta} &= \sum_{r = 1}^{\infty} \int_{A_{x_0}(r)}{|x-x_0|^{\theta} |F|^2
e^{\frac{-|x-x_0|^2}{4t_0}} dV} \\
& \leq \sum_{r = 1}^{\infty} r^{\theta} e^{\frac{(r-1)^2}{4 t_0}} 
\int_{A_{x_0}(r)}{ |F|^2 dV}\\
& \leq \sum_{r = 1}^{\infty} \int_{B_{x_0}(r)}{|F|^2 dV}.
\end{align*}
Incorporating the assumption of polynomial energy growth \eqref{peg}, we
conclude that
\begin{equation}
I_{\theta} \leq \sum_{r=1}^{\infty} r^{\theta} e^{\frac{-(k-1)^2}{4 t_0}}
p\left( \frac{r}{\sqrt{t_0}} \right) t_0^{\frac{n-2}{2}} < \infty.
\end{equation}
The result follows.
\end{proof}
\end{lemma}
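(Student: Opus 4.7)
The plan is straightforward: use a change of variables to convert the polynomial energy growth hypothesis on $\nab^\infty$ into an analogous polynomial bound on the energy of $\nab$ in balls around $x_0$, then partition $\mathbb{R}^n$ into annuli centered at $x_0$ and exploit the classical fact that Gaussian decay dominates polynomial growth.

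First I would translate the hypothesis from $\nab^\infty$ to $\nab$. Setting $y = (x-x_0)/\sqrt{t_0}$, the defining scaling of $\nab$ in terms of $\nab^\infty$ implies $F_\nab(x) = t_0^{-1} F_{\nab^\infty}(y)$ and $dV_x = t_0^{n/2} dV_y$, so the change of variables yields
\begin{align*}
\int_{B_{x_0}(r)} |F_{\nab}|^2 dV = t_0^{(n-4)/2} \int_{B_0(r/\sqrt{t_0})} |F_{\nab^{\infty}}|^2 dV \leq t_0^{(n-4)/2}\, p(r/\sqrt{t_0}) =: q(r),
\end{align*}
where $q$ is itself a polynomial in $r$ by the polynomial energy growth hypothesis for $\nab^\infty$.

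Next I would decompose $\mathbb{R}^n$ into annuli $A_{x_0}(k) := B_{x_0}(k) \setminus B_{x_0}(k-1)$ indexed by positive integers $k$. On each such annulus one has the pointwise bounds $|x-x_0|^{\theta} \leq k^{\theta}$ and $e^{-|x-x_0|^2/(4 t_0)} \leq e^{-(k-1)^2/(4 t_0)}$. Using the ball estimate above to dominate the energy on $A_{x_0}(k)$ by $q(k)$ and summing produces
\begin{align*}
I_{\theta}(\nab) \leq \sum_{k=1}^{\infty} k^{\theta}\, e^{-(k-1)^2/(4 t_0)}\, q(k),
\end{align*}
which converges for every $\theta \geq 0$ since the Gaussian factor decays faster than any polynomial in $k$.

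No real obstacle arises; the argument is essentially bookkeeping built around the observation that Gaussian decay beats polynomial growth. The only mildly substantive step is the scaling calculation identifying polynomial energy growth of $\nab$ (with an adjusted polynomial $q$) from that of $\nab^\infty$.
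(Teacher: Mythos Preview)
Your proposal is correct and follows essentially the same route as the paper: change of variables to transfer polynomial energy growth from $\nab^{\infty}$ to $\nab$, then an annular decomposition centered at $x_0$ combined with the fact that the Gaussian weight dominates any polynomial. Incidentally, your scaling exponent $t_0^{(n-4)/2}$ is the correct one (the paper's $t_0^{(n-2)/2}$ is a typo), though this constant factor is immaterial to the argument.
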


\begin{lemma}\label{lem:derividgen} Let $\N$ be a $\left(\chi,\tau
\right)$-soliton with polynomial energy growth and let $G_0 = e^{- \frac{\brs{x
- x_0}^2}{4 t_0}}$.  Then for any vector fields $\xi = \xi^i \del_i$ such that
$|\xi|^2 G_0 \in L^{\infty}(\bRn)$,
\begin{align}
\begin{split}\label{eq:gen5plemid}
\int_{\bRn}{\xi^i (x-x_0)^i \left| F \right|^2 G_0 dV} &= 8 t_0 \int_{\bRn}F_{pu
\ga}^{\gb} F_{iu\gb}^{\ga} \left( \del_p \xi^i + \frac{1}{2}\left(\frac{x_0}{
t_0} - \frac{\chi}{ \tau}  + x \left(\frac{1}{\tau} - \frac{1}{t_0} \right)
\right)^p \xi^i \right)G_0 dV\\
& \hsp + 2 t_0 \int_{\bRn}(\del_i \xi^i) \left| F \right|^2 G_0 dV.
\end{split}
\end{align}
\begin{proof}
We let $\xi = \xi^{i}\del_i$ be a smooth vector field on $M=\mathbb{R}^n$ and
$\eta \in C_c^{\infty}(\mathbb{R}^n)$ with $|\eta| \leq 1$. Observe that
\begin{equation}
\frac{\del G_0}{\del x^i} = \frac{- (x-x_0)^i}{2t_0} G_0.
\end{equation}
Applying this equality and integrating by parts we obtain
\begin{align}
\begin{split}\label{5ptlemmaeq}
\int_{\bRn}{\xi^i(x-x_0)^i |F|^2 \eta G_0 dV} &= -2t_0 \int_{\bRn}{\xi^i |F|^2
(\del_i G_0) \eta dV}\\
&= 2 t_0 \int_{\bRn}{ (\del_i \left( \xi^i |F|^2  \eta \right)) G_0 dV}\\
&= 2  t_0 \int_{\bRn}{ \left( (\del_i \xi^i \eta) |F|^2 + \eta\xi^i (\del_i
|F|^2) \right) G_0 dV}.
\end{split}
\end{align}
Additionally by an application of the Bianchi identity we have
\begin{align*}
\int_{\bRn}{ \langle D^*F , \xi \hook F \rangle \eta G_0 d V}
&= \int_{\bRn}{ \left(( \N_p F_{pu \ga}^{\gb}) F_{i u \gb}^{\ga} \right) \xi^i
\eta G_0 d V}\\
&=  \int_{\bRn}{\left( \N_p \left( F_{pu \ga}^{\gb} F_{i u \gb}^{\ga} \right) -
F_{pu \ga}^{\gb}( \N_p F_{i u \gb}^{\ga})  \right) \xi^i \eta G_0 d V}\\
&=  -\int_{\bRn}{\left( F_{pu \ga}^{\gb} F_{i u \gb}^{\ga} \right) \N_p (\xi^i
\eta G_0) d V} + \int_{\bRn}{ F_{pu \ga}^{\gb} (\N_u F_{p i \gb}^{\ga} + \N_i
F_{ u p \gb}^{\ga})   \xi^i \eta G_0 d V}\\
&=  -\int_{\bRn}{\left( \left( F_{pu \ga}^{\gb} F_{i u \gb}^{\ga} \right) \left(
(\del_p(\eta \xi^i)) - \frac{(x-x_0)^p}{2 t_0} \eta \xi^i \right) \right)G_0 d
V}\\
&\hsp + \int_{\bRn}{\left(  F_{pu \ga}^{\gb} (\N_u F_{p i \gb}^{\ga}\right)
\xi^i \eta G_0 d V} - \frac{1}{2}\int_{\bRn}{\left(  \N_i (F_{pu \ga}^{\gb} F_{
pu \gb}^{\ga})  \right) \xi^i \eta G_0 d V}\\
&=  - \int_{\bRn}{\left( \left( F_{pu \ga}^{\gb} F_{i u \gb}^{\ga} \right)
\left( (\del_p(\eta \xi^i)) - \frac{(x-x_0)^p}{2 t_0} \eta \xi^i \right)
\right)G_0 d V}\\
&\hsp + \int_{\bRn}{\left(  F_{pu \ga}^{\gb} (\N_u F_{p i \gb}^{\ga} )\right)
\xi^i \eta G_0 d V} + \frac{1}{2}\int_{\bRn}{\left(  \del_i (|F|^2)  \right)
\xi^i \eta G_0 d V}.
\end{align*}
We multiply through the equality by $4t_0$ and isolate the last term on the
right.
 Applying this to \eqref{5ptlemmaeq},
\begin{align*}
\int_{\bRn}{ \xi^i(x-x_0)^i |F|^2 \eta G_0 dV} &=  4 t_0 \int_{\bRn}{ F_{pu
\ga}^{\gb} (F_{i u \gb}^{\ga}) \left( (\del_p(\eta \xi^i)) - \frac{(x-x_0)^p}{2
t_0} \eta \xi^i \right)G_0 dV} \\
& \hsp - 4 t_0 \int_{\bRn}{\left(  F_{pu \ga}^{\gb} (\N_u F_{p i \gb}^{\ga}) + 
(D^*F)_{u \ga}^{\gb} F_{i u \gb}^{\ga}\right) \xi^i \eta G_0 d V}\\
& \hsp + 2 t_0 \int_{\bRn}{ (\N_i (\xi^i \eta)) |F|^2 G_0 dV}.
\end{align*}
We let $\eta_R$ be a cut off function with support within $B(R)$ which cuts off
to zero linearly between $B(R)$ and $B(R+1)$. Applying this to the above
expression and sending $R \to \infty$, it follows from the Dominated Convergence
Theorem (one will see that for each of the test functions $\eta$ we insert this
holds) that we obtain 
\begin{align}
\begin{split}\label{eq:zhanggen5pc0}
\int_{\bRn}{ \xi^i(x-x_0)^i |F|^2 G_0 dV}
&= 2 t_0 \int_{\bRn}{ (\del_i (\xi^i)) |F|^2 G_0 dV} + 4 t_0 \int_{\bRn}{ F_{pu
\ga}^{\gb} (F_{i u \gb}^{\ga}) \left( (\del_p( \xi^i)) - \frac{(x-x_0)^p}{2 t_0}
\xi^i \right)G_0 dV} \\
& \hsp - 4 t_0 \int_{\bRn}{\left(  F_{pu \ga}^{\gb} (\N_u F_{p i \gb}^{\ga} ) + 
(D^*F)_{u \ga}^{\gb} F_{i u \gb}^{\ga}\right) \xi^i G_0 d V}.
\end{split}
\end{align}
Then we manipulate the latter term of above. Since the integrand consists of an
inner product against a skew quantity we may consider the skew projection of $\N
F$ onto proper components. A subsequent application of the Bianchi identity and
inclusion of divergence term yields
\begin{align*}
\int_{\bRn}{\left(  F_{pu \ga}^{\gb} (\N_u F_{p i \gb}^{\ga} )\right) \xi^i G_0
d V} &= \frac{1}{2}\int_{\bRn}{\left(  F_{pu \ga}^{\gb} (\N_u F_{p i \gb}^{\ga}
- \N_p F_{u i \gb}^{\ga} )\right) \xi^i G_0 d V} \\
&= - \frac{1}{2}\int_{\bRn}{\left(  F_{pu \ga}^{\gb} (\N_i F_{up \gb}^{\ga}
)\right) \xi^i G_0 d V} \\
&= - \frac{1}{4}\int_{\bRn}{ \N_i \lbr |F|^2 \rbr \xi^i  G_0 d V} \\
&= \frac{1}{4}\int_{\bRn}{  |F|^2 \N_i \lbr \xi^i  G_0 \rbr d V} \\
&=  \frac{1}{4}\int_{\bRn}{  |F|^2 ( \del_i \xi^i ) G_0 d V} -
\frac{1}{8t_0}\int_{\bRn}{  |F|^2 \xi^i (x-x_0)^i  G_0 d V}.
\end{align*}
We insert this identity into \eqref{eq:zhanggen5pc0} and obtain
\begin{align*}
\begin{split}
\int_{\bRn}{ \xi^i(x-x_0)^i |F|^2 G_0 dV}
&= t_0 \int_{\bRn}{ (\del_i (\xi^i)) |F|^2 G_0 dV} + 4 t_0 \int_{\bRn}{ F_{pu
\ga}^{\gb} (F_{i u \gb}^{\ga}) \left( (\del_p( \xi^i)) - \frac{(x-x_0)^p}{2 t_0}
\xi^i \right)G_0 dV} \\
& \hsp + \frac{1}{2}\int_{\bRn}{ |F|^2 (x-x_0)^i \xi^i G_0 dV} - 4 t_0
\int_{\bRn}{(D^*F)_{u \ga}^{\gb} F_{i u \gb}^{\ga} \xi^i G_0 d V}.
\end{split}
\end{align*}
We note that since $\N$ is a self-similar solution based at
$(\chi,\tau)$ we replace $D^*F = \tfrac{\chi - x}{2 \tau} \hook F_{\N}$ to yield
\begin{align*}
\begin{split}
\int_{\bRn}{\left| F \right|^2 (x-x_0)^i \xi^i G_0 dV} &=8 t_0 \int_{\bRn}F_{pu
\ga}^{\gb} F_{iu\gb}^{ga} \left( \del_p \xi^i + \frac{1}{2}\left(\frac{x_0}{
t_0}- \frac{\chi}{\tau}  -x \left(\frac{1}{t_0}-\frac{1}{\tau} \right) \right)^p
\xi^i \right)G_0 dV \\
& \hsp + 2 t_0 \int_{\bRn}(\del_i \xi^i ) \left| F \right|^2 G_0 dV,
\end{split}
\end{align*}
as claimed.
\end{proof}
\end{lemma}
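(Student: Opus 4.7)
The plan is to derive the identity by repeated integration by parts against the Gaussian weight $G_0$, using the Bianchi identity to convert derivatives of $|F|^2$ into pairings with $D^*F$, and finally substituting the $(\chi,\tau)$-soliton equation $D^*F_{\N} = \tfrac{\chi - x}{2\tau}\hook F_{\N}$. Since the integrals require justification at infinity, I would carry out all calculations first against a compactly supported cutoff $\eta_R$ and pass to the limit via the Dominated Convergence Theorem, majorizing each integrand by $|F|^2 G_0$ times a polynomial in $|x-x_0|$ so that the finiteness of $I_\theta(\N)$ from \eqref{peg} applies uniformly.

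First I would use the identity $\partial_i G_0 = -\tfrac{(x-x_0)^i}{2t_0}G_0$ to rewrite the left-hand side as
\begin{align*}
\int_{\bRn} \xi^i(x-x_0)^i |F|^2 G_0\, dV = -2t_0 \int_{\bRn} \xi^i |F|^2 (\partial_i G_0)\, dV,
\end{align*}
and then integrate by parts. This produces the term $2t_0\int (\partial_i\xi^i)|F|^2 G_0\, dV$, already matching the last piece of \eqref{eq:gen5plemid}, together with a remainder $2t_0\int \xi^i (\partial_i|F|^2) G_0\, dV$ to be processed further.

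Next I would recognize $\partial_i |F|^2 = 2\langle \nabla_i F, F\rangle$ and seek an alternate expression via $\langle D^*F, \xi\hook F\rangle$. Starting from $\int \langle D^*F, \xi\hook F\rangle G_0\, dV$, integrating by parts moves the $\nabla_p$ off $D^*F = -\nabla_p F_{pu}$ and produces a $F_{pu\ga}^\gb F_{iu\gb}^\ga \nabla_p(\xi^i G_0)$ contribution. Applying the Bianchi identity $\nabla_u F_{pi} + \nabla_i F_{up} + \nabla_p F_{iu} = 0$ inside the remaining $\nabla F$ factor eliminates the asymmetric index combinations in favor of $\nabla_i|F|^2$, and reassembling yields the trade
\begin{align*}
2t_0\int \xi^i (\partial_i|F|^2)G_0\, dV = 8t_0 \int F_{pu\ga}^\gb F_{iu\gb}^\ga \left(\partial_p\xi^i - \tfrac{(x-x_0)^p}{2t_0}\xi^i\right)G_0\, dV - 8t_0\int \langle D^*F, \xi\hook F\rangle G_0\, dV,
\end{align*}
modulo another copy of $\int \xi^i(x-x_0)^i|F|^2 G_0\, dV$ that absorbs back into the left-hand side.

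Finally I would substitute $D^*F = \tfrac{\chi - x}{2\tau}\hook F$ into the remaining $\langle D^*F, \xi\hook F\rangle$ integral, which contributes a $F_{pu\ga}^\gb F_{iu\gb}^\ga \bigl(\tfrac{\chi - x}{2\tau}\bigr)^p \xi^i G_0$ term. Combined with the $-\tfrac{(x-x_0)^p}{2t_0}$ factor already present, the coefficient of $\xi^i$ assembles to exactly $\tfrac{1}{2}\bigl(\tfrac{x_0}{t_0} - \tfrac{\chi}{\tau} + x(\tfrac{1}{\tau}-\tfrac{1}{t_0})\bigr)^p$ as appears in \eqref{eq:gen5plemid}. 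The main obstacle is bookkeeping rather than conceptual novelty: one must keep scrupulous track of the signs produced by Bianchi and of the skew symmetry of $F_{pu}$ in $(p,u)$ so that the several $\nabla|F|^2$ and $F\# \nabla F$ contractions cancel in the right pairs, while simultaneously checking that every cutoff-dependent integrand is dominated independently of $R$ so that removing $\eta_R$ via \eqref{peg} produces the stated identity.
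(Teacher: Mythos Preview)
Your proposal is correct and follows essentially the same route as the paper: integrate by parts against $G_0$ using $\partial_i G_0 = -\tfrac{(x-x_0)^i}{2t_0}G_0$, express $\partial_i|F|^2$ through $\langle D^*F,\xi\hook F\rangle$ via the Bianchi identity and a second integration by parts, absorb the reappearing copy of the left-hand side, and finally substitute the $(\chi,\tau)$-soliton equation. The paper's argument is exactly this, with the cutoff $\eta_R$ and Dominated Convergence handled just as you describe.
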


\begin{cor}[Soliton Identities]\label{5ptlemSolId} Let $\N \in \mathfrak{S}$
satisfy polynomial energy growth and set $G_0:  = e^{-\frac{\brs{x - x_0}^2}{4
t_0}}$. Let $V = V^i \del_i$ be any constant vector field on $\mathbb{R}^n$.
Then the following equalities hold.
\begin{enumerate}
\item[(a)] $\int_{\bRn} \left( \frac{|x-x_0|^2}{4} + \frac{(4 - n)}{2} 
\right)\brs{F}^2 G_0 dV 
= - \int_{\bRn}\langle  \left( x\left(t_0 - 1 \right) + x_0 \right) \hook F,
(x-x_0) \hook F \rangle G_0 dV$,
\item[(b)] $\int_{\bRn}{\frac{\langle x-x_0, V \rangle}{2} \left| F \right|^2
G_0 dV} =  - 2\int_{\bRn}{ \langle \left(x \left( t_0 - 1 \right) + x_0 \right)
\hook F, V \hook F \rangle G_0 dV}$.
\end{enumerate}

\begin{proof}
We set $\chi= 0$ and $\tau =1$ to incorporate the soliton equation
\eqref{eq:solitondef}. To obtain (a) we insert $\xi^i = \tfrac{(x-x_0)^i}{4}$
into \eqref{eq:gen5plemid}, and for (b) we use $\xi^i = \frac{V^i}{2}$.
\end{proof}
\end{cor}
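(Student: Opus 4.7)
The plan is to specialize the master identity \eqref{eq:gen5plemid} from Lemma \ref{lem:derividgen} to the present setting by taking $(\chi,\tau) = (0,1)$, which is exactly the $(0,1)$-soliton data defining $\mathfrak{S}$. Under this substitution the weighting factor inside the $\xi^i$-term on the right-hand side of \eqref{eq:gen5plemid} simplifies via
\begin{align*}
\frac{1}{2}\left(\tfrac{x_0}{t_0} - \tfrac{\chi}{\tau} + x\left(\tfrac{1}{\tau} - \tfrac{1}{t_0}\right)\right)^p = \frac{1}{2 t_0}\left(x(t_0-1) + x_0\right)^p,
\end{align*}
so that the prefactor $8 t_0$ collapses to $4$ and delivers the vector field $x(t_0-1)+x_0$ that appears in both (a) and (b).

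For part (a), I would insert the test vector field $\xi^i = (x-x_0)^i/4$, for which $\partial_p \xi^i = \delta_{pi}/4$ and $\partial_i \xi^i = n/4$. The first summand of \eqref{eq:gen5plemid} then splits: the $\delta_{pi}$ contribution yields a full trace of $F \# F$ equal to a multiple of $|F|^2$, and this combines with the explicit divergence term $(2 t_0)(n/4)\int |F|^2 G_0\, dV$ to produce the $\tfrac{4-n}{2}|F|^2$ coefficient on the left-hand side after dividing by $t_0$ and moving to the other side. The remaining piece from the $\xi^i$ term is exactly $-\langle(x(t_0-1)+x_0)\hook F, (x-x_0)\hook F\rangle$, matching the stated right-hand side.

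For part (b), I would insert $\xi^i = V^i/2$ with $V$ a constant vector field, so both $\partial_p \xi^i$ and $\partial_i \xi^i$ vanish identically and only the pairing term in \eqref{eq:gen5plemid} survives; the identity collapses directly to the stated equality. The only integrability point to check is the hypothesis $|\xi|^2 G_0 \in L^\infty(\mathbb R^n)$ of Lemma \ref{lem:derividgen}: this is immediate for $\xi^i = V^i/2$ and follows from the Gaussian decay of $G_0$ for $\xi^i = (x-x_0)^i/4$.

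The main obstacle is purely one of bookkeeping, specifically tracking the index and sign conventions so that the contracted expressions of the form $F_{pu\alpha}^{\beta} F_{iu\beta}^{\alpha}(x(t_0-1)+x_0)^p Y^i$ are correctly recognized as $-\langle(x(t_0-1)+x_0)\hook F, Y\hook F\rangle$ (taking $Y = x - x_0$ for (a) and $Y = V$ for (b)), in particular keeping straight the sign arising from the endomorphism trace against the antisymmetry of $F$. Once this is carried out, the two identities drop out immediately from \eqref{eq:gen5plemid}.
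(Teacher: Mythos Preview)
Your approach is exactly the paper's: set $(\chi,\tau)=(0,1)$ in \eqref{eq:gen5plemid} and insert $\xi^i=(x-x_0)^i/4$ for (a) and $\xi^i=V^i/2$ for (b). Your added detail about the sign convention $F_{pu\alpha}^{\beta}F_{iu\beta}^{\alpha}W^pY^i=-\langle W\hook F,Y\hook F\rangle$ and the integrability check are helpful elaborations, though the phrase ``after dividing by $t_0$'' in your (a) sketch is imprecise---the $\delta_{pi}$ trace and divergence term combine directly to $\tfrac{(4-n)t_0}{2}\int|F|^2G_0\,dV$ without any division, which suggests the stated coefficient in (a) should carry a factor of $t_0$ (as it does when the identity is applied in Proposition~\ref{CMLemma}).
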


\begin{cor} \label{5partlemma} Let $\N \in \mathfrak{S}_{x_0,t_0}$ be a
$(x_0,t_0)$-soliton
with polynomial energy growth.  Let $V=V^i \del_i$ be any constant vector field
on $\mathbb{R}^n$ and $\gamma \in [1,n] \cap \mathbb{N}$. Then the following
equalities hold.
\begin{enumerate}
\item[(a)] $
\int_{\mathbb{R}^n}{\left( \left(4- n \right) + \frac{| x - x_0 |^2}{2t_0} 
\right)|F|^2 G_0 dV} = 0$,
\item[(b)] $\int_{\bRn}{(x-x_0)^{\gamma} |F|^2 G_0 dV} = 0$,
\item[(c)]  $\int_{\bRn}{|x-x_0|^4 |F|^2 G_0 dV} = 4(n-2)(n-4)t_0^2 \int_{\bRn}{
|F|^2 G_0 dV}  - 64 t_0^3 \int_{\bRn}{|D^*F|^2 G_0 dV}$,
\item[(d)] $\int_{\bRn}{|x-x_0|^2 \langle V , x-x_0 \rangle |F|^2 G_0 dV} = 
\int_{\bRn}{\langle (V \hook F), D^*F \rangle G_0 dV} = 0$,

\item[(e)] $\int_{\bRn}{ \langle V, x-x_0 \rangle^2 |F|^2 G_0 dV} = 2t_0
\int_{\bRn}{ |V|^2 |F|^2 G_0 dV}  - 8 t_0 \int_{\bRn}{|V \hook F|^2G_0 dV}$.
\end{enumerate}

\begin{proof}
Starting with \eqref{eq:gen5plemid}, we set $\tau= t_0$ and $\chi = x_0$.
This yields
\begin{align}
\begin{split}\label{eq:zhang5pc}
\int_{\bRn} |F|^2 (x-x_0)^i \xi^i G_0 dV &=8 t_0 \int_{\bRn}F_{pu \ga}^{\gb}
F_{iu\gb}^{\ga} \left( \del_p  \xi^i\right)G_0 dV + 2 t_0 \int_{\bRn}(\del_i
\xi^i ) \left| F \right|^2 G_0 dV.
\end{split}
\end{align}
We approach the listed quantities of the lemma with this identity.

\begin{enumerate}

\item[(a)] This immediately follows by setting $\xi^i := \frac{(x-x_0)^i}{4
t_0}$.

\item[(b)] This immediately follows by setting $\xi^i := \delta^{\gamma i}$.

\item[(c)] Set $\xi^i := |x-x_0|^2 (x-x_0)^i$. Prior to solving we compute the
following derivative:
\begin{align*}
\del_p (|x-x_0|^2 (x-x_0)^i) &= 2 (x-x_0)^p (x-x_0)^i +  |x-x_0|^2 \delta_{ip}.
\end{align*}
Applying this to \eqref{eq:zhang5pc} gives
\begin{align*}
\int_{\bRn}{|x-x_0|^4 |F|^2 G_0 dV} &= 2t_0 \int_{\bRn}{ (2+n) |x-x_0|^2 |F|^2
G_0 dV} \\
& \hsp + 8t_0 \int_{\bRn}{F_{pu \ga}^{\gb} F_{i u \gb}^{\ga} \left( 2 (x-x_0)^p
(x-x_0)^i +  |x-x_0|^2 \delta^{ip} \right)G_0 dV} \\
&= 2(n-2)t_0 \int_{\bRn}{  |x-x_0|^2 |F|^2 G_0 dV} - 16 t_0 \int_{\bRn}{ | F
\hook (x-x_0) |^2G_0 dV}.
\end{align*}
Now we replace the first term with the identity of (a) and the second term with
the $(x_0,t_0)$-soliton equation \eqref{eq:X0T0solitondef} to conclude that
\begin{align*}
\int_{\bRn}{|x-x_0|^4 |F|^2 G_0 dV} &= 4(n-2)(n-4)t_0^2 \int_{\bRn}{ |F|^2 G_0
dV}  - 64 t_0^3 \int_{\bRn}{|D^*F|^2 G_0 dV}.
\end{align*}
\item[(d)] To prove this identity it will require applying two different test
functions to \eqref{eq:zhang5pc}. First we set $\xi^i := |x-x_0|^2 V^i$. Then
using (b) and the $(x_0,t_0)$-soliton equation \eqref{eq:X0T0solitondef} we
obtain
\begin{align*}
\int_{\bRn}{ |x-x_0|^2 \langle V ,(x-x_0) \rangle |F|^2 G_0 dV} &= 4 t_0
\int_{\bRn}{ \langle (x-x_0), V \rangle |F|^2 G_0 dV} \\
& \hsp + 16 t_0 \int_{\bRn}{F_{pu \ga}^{\gb} F_{i u \gb}^{\ga}  (x-x_0)^p V^i
G_0 dV} \\
& = - 32 t_0^2 \int_{\bRn}{(D^*F)_{u \ga}^{\gb} F_{i u \gb}^{\ga}  V^i G_0 dV}. 
\end{align*}
Now we will instead consider $\xi^i := \langle V, (x-x_0) \rangle (x-x_0)^i$.
Prior to this we differentiate
\begin{align*}
\del_p \left[\langle V, (x-x_0) \rangle (x-x_0)^i \right] &= V^p (x-x_0)^i +
\langle V, (x-x_0) \rangle \delta_{ip}.
\end{align*}
therefore we have, applying (b) to \eqref{eq:zhang5pc},
\begin{align*}
\int_{\bRn}{ \langle V, (x-x_0) \rangle |x-x_0 |^2 |F|^2 G_0 dV} &= 2t_0
\int_{\bRn}{ ( V^i (x-x_0)^i + n \langle V, (x-x_0) \rangle) |F|^2 G_0 dV} \\
& \hsp + 8 t_0 \int_{\bRn}{F_{pu \ga}^{\gb} F_{i u \gb}^{\ga} \left( V^p
(x-x_0)^i + \langle V, (x-x_0) \rangle \delta^{ip}  \right)G_0 dV}\\
&= 8 t_0 \int_{\bRn}{F_{pu \ga}^{\gb} F_{i u \gb}^{\ga} \left( V^p (x-x_0)^i
\right)G_0 dV}\\
&= - 16t_0^2 \int_{\bRn}{(D^*F)_{u \ga}^{\gb} F_{pu \gb}^{\ga}V^p G_0 dV}.
\end{align*}
By equality of the two expressions we conclude that
\begin{equation*}
\int_{\bRn}{ \langle V, (x-x_0) \rangle |x-x_0 |^2 |F|^2 G_0 dV} =
\int_{\bRn}{\langle D^*F, F \hook V \rangle G_0 dV}  = 0.
\end{equation*}
\item[(e)] Set $\xi^i := \langle V, x-x_0 \rangle V^i$. This quantity
differentiated is precisely
\begin{align*}
\int_{\bRn}{ \langle V, x-x_0 \rangle^2 |F|^2 G_0 dV} &= 2t_0 \int_{\bRn}{ |V|^2
|F|^2 G_0 dV}  - 8 t_0 \int_{\bRn}{|F \hook V|^2G_0 dV}.
\end{align*}
\end{enumerate}
The final result follows, and the proof is complete.
\end{proof}
\end{cor}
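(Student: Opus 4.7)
The plan is to derive all five identities from the master formula \eqref{eq:gen5plemid} of Lemma \ref{lem:derividgen} by choosing appropriate test vector fields $\xi$. Since $\N \in \mathfrak{S}_{x_0,t_0}$ is a $(x_0,t_0)$-soliton, we set $\chi = x_0$ and $\tau = t_0$; the linear factor $\left(\tfrac{x_0}{t_0} - \tfrac{\chi}{\tau} + x\bigl(\tfrac{1}{\tau} - \tfrac{1}{t_0}\bigr)\right)$ in \eqref{eq:gen5plemid} then vanishes identically, leaving the much simpler identity
\begin{equation*}
\int_{\bRn} |F|^2 (x-x_0)^i \xi^i G_0 \, dV \;=\; 8 t_0 \int_{\bRn} F_{pu\ga}^{\gb} F_{iu\gb}^{\ga} (\del_p \xi^i) G_0 \, dV \;+\; 2 t_0 \int_{\bRn} (\del_i \xi^i) |F|^2 G_0 \, dV.
\end{equation*}
All five identities then follow by clever choices of $\xi$.

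For (a), I would plug in $\xi^i = (x-x_0)^i/(4t_0)$, whose divergence is $n/(4t_0)$ and whose derivative $\del_p \xi^i = \delta_{ip}/(4t_0)$ produces a term proportional to $|F|^2$ (since $F^{\gb}_{iu\ga} F^{\ga}_{iu\gb} = |F|^2$ up to the standard factor of $2$ from the skew-symmetry). Collecting terms yields exactly (a). For (b), I would simply take the constant field $\xi^i = \delta^{\gamma i}$; both terms on the right vanish and (b) drops out.

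For (c), the natural choice is $\xi^i = |x-x_0|^2 (x-x_0)^i$. Computing $\del_p \xi^i = 2(x-x_0)^p(x-x_0)^i + |x-x_0|^2 \delta_{ip}$ and $\del_i \xi^i = (n+2)|x-x_0|^2$, the master identity gives
\begin{equation*}
\int |x-x_0|^4 |F|^2 G_0 = 2(n-2)t_0 \int |x-x_0|^2 |F|^2 G_0 - 16 t_0 \int |(x-x_0) \hook F|^2 G_0,
\end{equation*}
after using the skew-symmetry of $F$. The first term on the right is rewritten using identity (a), and the last term is rewritten using the soliton equation $D^*F = -\tfrac{x-x_0}{2t_0} \hook F$, which converts $|(x-x_0) \hook F|^2 = 4 t_0^2 |D^*F|^2$. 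Assembling gives (c).

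For (d), the strategy is to extract the identity by comparing two choices of $\xi$. Plugging in $\xi^i = |x-x_0|^2 V^i$ gives one expression for $\int \langle V, x-x_0\rangle |x-x_0|^2 |F|^2 G_0$ in terms of $\int \langle D^*F, V \hook F\rangle G_0$ (using the soliton equation on the resulting $(x-x_0) \hook F$ term, and killing the remaining linear-in-$(x-x_0)$ integral via (b)). Plugging in $\xi^i = \langle V, x-x_0\rangle (x-x_0)^i$ gives a second expression for the same integral, again with a single $\int \langle D^*F, F\hook V \rangle G_0$ term. Setting the two equal, with matching coefficients, yields $\int \langle D^*F, V \hook F \rangle G_0 = 0$, and hence (d). Finally, for (e), $\xi^i = \langle V, x-x_0\rangle V^i$ has $\del_p \xi^i = V^p V^i$ and $\del_i \xi^i = |V|^2$, which immediately gives (e).

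The only genuine obstacle is the algebraic bookkeeping in (c) and (d), where one must correctly apply the $(x_0,t_0)$-soliton equation to convert $(x-x_0)\hook F$ terms into $D^*F$ terms and use the earlier identities (a) and (b) to cancel auxiliary integrals; everything else is routine substitution into the master identity.
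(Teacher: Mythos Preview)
Your proposal is correct and follows essentially the same approach as the paper: both specialize Lemma~\ref{lem:derividgen} with $\chi=x_0$, $\tau=t_0$ to obtain the simplified master identity, and then plug in exactly the same test fields $\xi^i$ for each part, including the comparison of the two test fields $|x-x_0|^2 V^i$ and $\langle V,x-x_0\rangle (x-x_0)^i$ in (d). The only thing to be careful about is the sign and factor conventions in the paper's definition of $|F|^2$ when simplifying the $F_{pu\ga}^{\gb}F_{iu\gb}^{\ga}\delta_{ip}$ term in (a) and (c), but your outline handles this correctly.
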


One consequence of these identities is that the Yang-Mills flow in dimension
$n=4$ cannot exhibit type I singularities.

\begin{prop}\label{prop:YMsoliton4flat} Let $E \to (M^4, g)$ be a smooth vector
bundle,
and suppose $\N_t$ is a solution to Yang-Mills flow on $E$ which exists on a
maximal time interval $[0,T)$, with $T < \infty$.  Then
\begin{align*}
\lim_{t \to T} (T - t) \brs{F_{\N_t}} = \infty.
\end{align*}
Moreover, any soliton on $\mathbb R^n$, $n \leq 4$, is flat.

\begin{proof}
Suppose to the contrary that there exists a $C \in \mathbb{R}$ so that
\begin{align*}
\lim_{t \to T} (T - t) \brs{F_{\N_t}} \leq C.
\end{align*}
By Weinkove's main theorem (pp.2 \cite{Weinkove}), we can construct a type I
blowup limit $\nab_{\infty}$ which is a self-similar solution, whose time $t=-1$
slice is a nonflat $(0,1)$-soliton.  By Corollary \ref{5partlemma} part (a)
since $\dim M = 4$, we conclude
\begin{equation}
\int_{\mathbb{R}^n}{ \frac{|x|^2}{2} |F_{\N_{\infty}}|^2 G_{0} dV} = 0.
\end{equation}
Thus it follows that $\nab_{\infty}$ is flat, but this is a contradiction since
by construction $\nab_{\infty}$ is nonflat. The result follows.
\end{proof}
\end{prop}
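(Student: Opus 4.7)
The plan is to deduce both assertions from Corollary \ref{5partlemma}(a). For a $(0,1)$-soliton $\N \in \mathfrak{S}$ with polynomial energy growth, that corollary specializes (taking $x_0 = 0$, $t_0 = 1$) to
\begin{align*}
\int_{\mathbb{R}^n}\left( (4-n) + \frac{|x|^2}{2} \right) |F_{\N}|^2 G_0 \, dV = 0,
\end{align*}
where $G_0 = e^{-|x|^2/4} > 0$. When $n \leq 4$ the bracketed weight is pointwise nonnegative, and strictly positive off the single point $x = 0$ (indeed uniformly positive on all of $\mathbb{R}^n$ when $n < 4$). Combined with $|F_{\N}|^2 G_0 \geq 0$, this forces $|F_{\N}| \equiv 0$ almost everywhere on $\mathbb{R}^n$, hence everywhere by smoothness. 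This establishes the flatness statement.

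With the flatness of four-dimensional solitons in hand, I would prove the singularity assertion by contradiction. Assume instead that $\lim_{t \to T}(T - t)|F_{\N_t}|$ is bounded by some constant $C$, i.e.\ a type I bound holds. By Weinkove's main theorem \cite{Weinkove} one then extracts a blowup limit $\N_\infty$ which is a self-similar Yang--Mills solution whose $t = -1$ slice is a nontrivial $(0,1)$-soliton on $\mathbb{R}^4$ (after invoking Lemma \ref{sssaresolitons} to translate to the normalized form if necessary). By construction $\N_\infty$ has bounded curvature, and hence polynomial energy growth via Proposition \ref{pegprop}, so the flatness result applies and forces $\N_\infty \equiv 0$ identically. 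This contradicts the nontriviality of the blowup limit, and the assertion follows.

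The only technical point is verifying the polynomial-energy-growth hypothesis needed to apply Corollary \ref{5partlemma}(a) to $\N_\infty$, which is precisely what Lemma \ref{Fballbd} and Proposition \ref{pegprop} are designed to supply in the type I blowup setting. Beyond that bookkeeping, no real obstacle arises --- the argument is a direct consequence of the sign of the factor $(4-n)$ in the five-point identity, which turns a mild weighted identity in dimensions $n \geq 5$ into a genuine rigidity statement in dimensions $n \leq 4$.
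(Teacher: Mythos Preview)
Your proof is correct and follows essentially the same route as the paper's: both reduce to Corollary \ref{5partlemma}(a) and the sign of the factor $(4-n)$, then invoke Weinkove's blowup theorem to obtain the contradiction. Your organization is slightly cleaner (proving the flatness clause first and deducing the type~I statement as a corollary), and you are more explicit than the paper about verifying the polynomial-energy-growth hypothesis for the blowup limit via Proposition~\ref{pegprop}.
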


\subsection{First variation}\label{ss:firstvariation}

In this subsection we compute the first variation of the $\FF$-functional.  For
both the first and second variation computations we the dependence on $s$ will
be dropped for all terms except the varying base point $(x_s,t_s)$ during
coordinate computations.  Moreover, the variational calculations require the
variation of the connection to be in a certain weighted Sobolev space (see
\ref{Wdef}) which we suppress here.

\begin{prop}[First variation]\label{zYMfirstvar} Let $\gG_s$, $x_s$ and $t_s$ be
one parameter families of connections, points in $\mathbb R^n$, and positive
real numbers respectively, and set
\begin{equation}\label{defGs}
G_s(x) := (4 \pi t_s)^{\frac{-n}{2}} e^{\frac{-|x-x_s|^2}{4t_s} },
\end{equation}
and furthermore set
\begin{equation*}
\dot{t}_s := \frac{d t_s}{d s}, \hspace{0.1cm} \dot{x}_s := \frac{d x_s}{d s},
\hspace{0.1cm} \dot{\gG}_s := \frac{d \gG_s}{d s}.
\end{equation*}
Then
\begin{align}
\begin{split}\label{eq:zYMfirstvar}
\frac{d}{d s} \left[  \mathcal{F}_{x_s,t_s}(\nab_s)  \right] &=\dot{t}_s
\int_{\bRn}{\left( t_s \left(\frac{4- n}{2}\right) + \frac{| x - x_s |^2}{4} 
\right) |F_s|^2 G_s dV}\\
&\ \hsp +  t_s \int_{\bRn}{ \frac{\langle \dot{x}_s , x - x_s \rangle}{2}
|F_s|^2 G_s dV}\\
& \hsp +  4 t_s^2 \int_{\bRn}{ \left\langle \dot{\gG}_s , D_s^* F_s + \left(
\frac{(x-x_s)}{2 t_s} \hook F_s \right) \right\rangle G_s dV}.
\end{split}
\end{align}

\begin{proof}
We first differentiate the following expression
\begin{align*}
\frac{d}{d s} \left[ \int_{\bRn}{|F_s|^2 G_s dV} \right] &= \int_{\bRn}{ \left(
\frac{\del}{\del s} |F_s|^2 \right) G_s  dV} + \int_{\bRn}{|F_s|^2 \left(
\frac{\del}{\del s}G_s \right)dV}.
\end{align*}
We compute the first quantity on the right side of the equality:
\begin{align}
\begin{split}\label{eq:zYMfirstvar1}
\int_{\bRn}{ \left(\frac{\del}{\del s}|F_s|^2 \right) G_s dV}
&=  -2\int_{\bRn}{ \left( \nab_i \dot{\gG}_{j \ga}^{\gb} - \nab_j \dot{\gG}_{i
\ga}^{\gb} \right) F_{ij \gb}^{\ga} G dV} \\
&= 2\int_{\bRn}{ \dot{\gG}_{j \ga}^{\gb}( \nab_i  F_{ij \gb}^{\ga}) G dV} +
2\int_{\bRn}{ \dot{\gG}_{j \ga}^{\gb}  F_{ij \gb}^{\ga} (\nab_i G ) dV} \\
& \hsp - 2\int_{\bRn}{ \dot{\gG}_{i \ga}^{\gb} (\nab_j  F_{ij \gb}^{\ga}) G dV}
- 2\int_{\bRn}{\dot{\gG}_{i \ga}^{\gb}  F_{ij \gb}^{\ga} (\nab_j  G ) dV}\\
&= 4 \int_{\bRn}{ \langle  \dot{\gG}_s, D_s^* F_s \rangle G_s dV} +
4\int_{\bRn}{\dot{\gG}_{j \ga}^{\gb}  F_{ij \gb}^{\ga} (\nab_i G ) dV}.
\end{split}
\end{align}
We differentiate $G_s$ and obtain
\begin{align}
\begin{split}\label{eq:sderivG}
\frac{\del}{\del s} \left[ G_s \right] = \left( \frac{-n}{2}
\frac{\dot{t}_s}{t_s} + \frac{\dot{t}_s |x - x_s|^2}{4 t_s^2} + \frac{\langle
\dot{x}_s, x - x_s \rangle}{2 t_s} \right) G_s.
\end{split}
\end{align}
Furthermore,
\begin{equation}\label{eq:spcderivG}
\nab_i G = - \frac{(x- x_s)^i}{2t_s}G.\end{equation}
Applying this to \eqref{eq:zYMfirstvar1} gives that
\begin{align*}
\int_{\bRn}{\dot{\gG}_{j \ga}^{\gb}  F_{ij \gb}^{\ga} (\nab_i G) dV} & =
\int_{\bRn}{\dot{\gG}_{j \ga}^{\gb}  F_{ij \gb}^{\ga} \left(- \frac{(x-
x_s)^i}{2t_s}  \right) G  dV}\\
& = \int_{\bRn}{\left\langle \dot{\gG}_s , \left( \frac{(x- x_s)}{2t_s}\hook 
F_s \right) \right\rangle G_s  dV}.
\end{align*}
So we conclude that
\begin{align}
\begin{split}\label{eq:zYMfirstvar2}
\frac{d}{d s} \left[ \int_{\bRn}{|F_s|^2 G_s dV} \right]  &= \int_{\bRn}{| F_s
|^2 \left( \frac{-n}{2} \frac{\dot{t}_s}{t_s} + \frac{\dot{t}_s |x - x_s|^2}{4
t_s^2} + \frac{\langle \dot{x}_s, x - x_s \rangle}{2 t_s} \right) G_s dV} \\
& \hsp + 4 \int_{\bRn}{\left\langle \dot{\gG}_s, D^* F_s + \left( \frac{(x-
x_s)}{2t_s} \hook F_s \right) \right\rangle G_s  dV}.
\end{split}
\end{align}
With this in mind we differentiate the expression
\begin{align*}
\frac{d}{d s} \left[  t_s^2 \int_{\bRn}{|F_s|^2 G_s dV} \right]
& = 2  t_s \dot{t}_s  \int_{\bRn}{|F_s|^2 G_s dV}  +  t_s^2
\left(\frac{\del}{\del s} \left[ \int_{\bRn}{|F_s|^2 G_s dV} \right]\right),
\end{align*}
and then applying \eqref{eq:zYMfirstvar2} we obtain that
\begin{align*}
\frac{d}{d s} \left[ t_s^2 \int_{\bRn}{|F_s|^2 G_s dV} \right] &= t_s^2
\int_{\bRn}{| F_s |^2 \left( \frac{-n}{2} \frac{\dot{t}_s}{t_s} +
\frac{\dot{t}_s |x - x_s|^2}{4 t_s^2} + \frac{\langle \dot{x}_s, x - x_s
\rangle}{2 t_s} + \frac{2\dot{t}_s}{t_s} \right) G_s dV} \\
& \hsp  + 4 t_s^2 \int_{\bRn}{ \left\langle \dot{\gG}_s , D_s^* F_s + \left(
\frac{(x-x_s)}{2 t_s} \hook F_s \right) \right\rangle G_s dV}.
\end{align*}
Reordering terms yields the result.
\end{proof}
\end{prop}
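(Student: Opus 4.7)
The plan is to differentiate
\begin{equation*}
\mathcal{F}_{x_s,t_s}(\N_s) = t_s^2 \int_{\bRn} |F_s|^2 G_s \, dV
\end{equation*}
term by term via the product rule. This splits the derivative into three pieces: a $2 t_s \dot t_s \int |F_s|^2 G_s \, dV$ contribution from the prefactor, a $t_s^2 \int (\partial_s |F_s|^2) G_s \, dV$ contribution from the curvature, and a $t_s^2 \int |F_s|^2 (\partial_s G_s) \, dV$ contribution from the Gaussian weight. My goal is to show how these three pieces reassemble into the three summands of \eqref{eq:zYMfirstvar}.

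For the curvature piece I would use $\dot F = D\dot\Gamma$ so that $\partial_s|F_s|^2 = 2\langle D\dot\Gamma, F\rangle$, and integrate by parts in $x$ against the Gaussian weight. The antisymmetrization hidden in $D\dot\Gamma$ upgrades the scalar coefficient $2$ to $4$ in the final answer: the honest adjoint term contributes $4\int\langle\dot\Gamma, D^*F\rangle G_s \, dV$, while the derivative of $G_s$ transferred onto $F$ via the identity $\N_i G_s = -\tfrac{(x-x_s)^i}{2 t_s}G_s$ manufactures precisely the translator-type coupling $\tfrac{x-x_s}{2 t_s}\hook F$. Together these yield the $\dot\Gamma$-coefficient $4 t_s^2 \int \langle\dot\Gamma, D^* F + \tfrac{x-x_s}{2 t_s}\hook F\rangle G_s \, dV$, which is exactly the soliton operator $S_{x_s,t_s}$ paired against the variation.

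For the Gaussian weight, a chain-rule computation on $G_s = (4\pi t_s)^{-n/2}\exp(-|x-x_s|^2/(4 t_s))$ differentiates through both the prefactor $t_s^{-n/2}$ and the exponent, giving three terms with coefficients $-\tfrac{n\dot t_s}{2 t_s}$, $\tfrac{\dot t_s |x-x_s|^2}{4 t_s^2}$, and $\tfrac{\langle \dot x_s, x-x_s\rangle}{2 t_s}$. Multiplying by $t_s^2 |F_s|^2$ and combining with the prefactor term $2 t_s \dot t_s \int |F_s|^2 G_s \, dV$ collapses the $\dot t_s$-coefficient cleanly to $t_s(4-n)/2 + |x-x_s|^2/4$, while the $\dot x_s$-contribution carries through unchanged. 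Collecting the three blocks produces the stated formula.

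The only real obstacle is justifying the integration by parts used in the curvature piece: one must check that no boundary contributions survive at infinity and that every integrand is absolutely integrable. The statement already restricts $\dot\Gamma$ to a suitable weighted Sobolev space, and the Gaussian decay of $G_s$ combined with the polynomial energy growth hypothesis (which controls $|F|^2$-moments as in the preceding lemmas of this section) suffices to kill the boundary terms and make each piece finite; with that in hand the proof reduces to the bookkeeping described above.
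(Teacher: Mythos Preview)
Your proposal is correct and follows essentially the same approach as the paper: product rule on $t_s^2\int|F_s|^2 G_s\,dV$, the identity $\dot F = D\dot\Gamma$ followed by integration by parts against $G_s$ (picking up both $D^*F$ and the $\tfrac{x-x_s}{2t_s}\hook F$ term via $\nabla_i G_s = -\tfrac{(x-x_s)^i}{2t_s}G_s$), and the chain-rule differentiation of $G_s$ in $s$. The only cosmetic difference is that the paper first computes $\tfrac{d}{ds}\int|F_s|^2 G_s\,dV$ and then tacks on the $t_s^2$ prefactor, whereas you split into three pieces from the outset; the bookkeeping is otherwise identical.
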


\begin{cor} \label{Fcrit} The point $(\nab, x_0, t_0)$ is a critical point of
the $\mathcal{F}$-functional if and only if $\nab$ is an $(x_0,t_0)$-soliton.
\begin{proof} If  $(\nab, x_0, t_0)$ is a critical point, then all partial
derivatives with respect to $t$, $x$ and $\gG$ vanish. We note that if we vary
only the connection coefficient matrix $\gG$ then we have
\begin{equation*}
0 =  4 t_s^2 \int_{\bRn} \left\langle \dot{\gG}_s , D_s^* F_s +
\left(\frac{(x-x_s)}{2 t_s} \hook F_s \right) \right\rangle G_s dV.
\end{equation*}
In particular, this implies that $\nab$ is an $(x_0,t_0)$-soliton. Conversely,
given a point $(x_0,t_0)$ and that $\N$ is a soliton, then we apply identities
(a) and (b) of Corollary \ref{5partlemma} to the variation identity of
Proposition \ref{zYMfirstvar}. Each quantity vanishes, yielding that
$(\N,x_0,t_0)$ is a critical point and the result follows.
\end{proof}
\end{cor}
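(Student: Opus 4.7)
The plan is to derive this directly from the first variation formula in Proposition \ref{zYMfirstvar}, which already encodes the three independent variational directions (the connection $\Gamma$, the spatial basepoint $x$, and the time basepoint $t$). Since a critical point must kill the first variation against every admissible triple $(\dot\Gamma_s, \dot x_s, \dot t_s)$, the two implications reduce to isolating the three contributions in \eqref{eq:zYMfirstvar} and showing they vanish independently.

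For the forward implication, I would fix the basepoint $(x_0, t_0)$ and vary only the connection, i.e.\ choose $\dot t_s = 0$, $\dot x_s = 0$, and $\dot\Gamma_s$ arbitrary (of sufficient decay so that the first variation is well-defined). The first two lines of \eqref{eq:zYMfirstvar} then vanish automatically, leaving
\begin{equation*}
0 = 4 t_0^2 \int_{\bRn} \Bigl\langle \dot\Gamma_s,\ D^*_{\N} F_{\N} + \tfrac{x - x_0}{2 t_0} \hook F_{\N}\Bigr\rangle G_0\, dV
\end{equation*}
for every admissible $\dot\Gamma_s$. By the fundamental lemma of the calculus of variations (applied in the weighted $L^2$ inner product defined by $G_0$), this forces $D^*_{\N} F_{\N} + \tfrac{x-x_0}{2t_0}\hook F_{\N} = 0$ pointwise, which is exactly the $(x_0,t_0)$-soliton equation \eqref{eq:X0T0solitondef}.

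For the converse, assume $\N \in \mathfrak{S}_{x_0,t_0}$ with polynomial energy growth, so that the identities of Corollary \ref{5partlemma} apply. The soliton equation makes the $\dot\Gamma_s$-term in \eqref{eq:zYMfirstvar} vanish identically. The $\dot t_s$-coefficient is (up to a factor) $\int_{\bRn}\bigl(t_0 \tfrac{4-n}{2} + \tfrac{|x-x_0|^2}{4}\bigr)|F|^2 G_0\, dV$, which is zero by Corollary \ref{5partlemma}(a). The $\dot x_s$-coefficient is a constant times $\int_{\bRn} \langle \dot x_s, x-x_0 \rangle |F|^2 G_0\, dV$, which decomposes into the coordinate integrals $\int (x-x_0)^\gamma |F|^2 G_0\, dV$ that vanish by Corollary \ref{5partlemma}(b). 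Hence each piece of $\tfrac{d}{ds}\FF_{x_s,t_s}(\N_s)$ vanishes at $s=0$ for every choice of $(\dot\Gamma_s, \dot x_s, \dot t_s)$, and $(\N, x_0, t_0)$ is a critical point.

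The only subtle point is the justification that the first variation formula holds for the space of variations in which we are testing criticality; once one adopts the weighted Sobolev framework alluded to in \S\ref{ss:firstvariation} (making precise the decay required so that all integrations by parts in the derivation of Proposition \ref{zYMfirstvar} are legal), the argument is otherwise a mechanical application of two results already in hand.
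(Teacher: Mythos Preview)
Your proposal is correct and follows essentially the same approach as the paper's proof: both directions hinge on Proposition \ref{zYMfirstvar}, with the forward implication isolating the $\dot\Gamma$-term and the converse invoking identities (a) and (b) of Corollary \ref{5partlemma} to kill the $\dot t$- and $\dot x$-terms. Your write-up is slightly more explicit about which identity handles which line, but the logic is identical.
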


\begin{prop}\label{solitonmonotonicity} Let $\nab_t \in \mathcal{A}_E \times
[0,T)$ be a solution to Yang-Mills flow with polynomial energy growth.  Then
$\la(\nab_t)$ is non-increasing in $t$.

\begin{proof} Let $t_1, t_2 \in \mathbb{R}$ with $t_1 < t_2 < T$. Given
$\epsilon > 0$ there exists $(x_0,t_0)$ such that
\begin{equation}\label{eq:solitonmonotonicity0}
\mathcal{F}_{x_0,t_0}(\nab_{t_2}) \geq \la(\nab_{t_2}) - \epsilon.
\end{equation}
Thus it follows from Hamilton's monotonicity formula that for any $\delta \in
(0, t_2)$ we have
\begin{equation}\label{eq:solitonmonotonicity1}
\mathcal{F}_{x_0, t_0 + t_2}(\nab_{t_2}, t_2) \leq \mathcal{F}_{x_0, t_0 +
t_2}(\nab_{t_2 - \delta}, t_2 - \delta).
\end{equation}
But we observe that
\begin{align}
\begin{split}\label{eq:solitonmonotonicity2}
\mathcal{F}_{x_0, t_0 + t_2} (\nab_{t_2}, t_2) &= 
\int_{\bRn}{|F_{\nab_{t_2}}|^2 e^{- \frac{|x-x_0|^2}{4((t_0 + t_2) - t_2)}}dV}
\\
&= \int_{\bRn}{|F_{\nab_{t_2}}|^2 e^{- \frac{|x-x_0|^2}{4 t_0}}dV} \\
& = \mathcal{F}_{x_0, t_0} (\nab_{t_2}).
\end{split}
\end{align}
Thus combining \eqref{eq:solitonmonotonicity1} and
\eqref{eq:solitonmonotonicity2} we have
\begin{align}
\begin{split}\label{eq:solitonmonotonicity3}
\mathcal{F}_{x_0,t_0}(\nab_{t_2}) &\leq \mathcal{F}_{x_0, t_0 + t_2}(\nab_{t_2 -
\delta}, t_2 - \delta) \\
& = \int_{\mathbb{R}^n}{|F_{\nab_{t_2 - \delta}}|^2 e^{ -\frac{|x-x_0|^2}{t_0 +
\delta}} dV}\\
&=\mathcal{F}_{x_0,t_0 + \delta}(\nab_{t_2 - \delta}).
\end{split}
\end{align}
We set $\delta = t_2 - t_1$ and observe that, combining
\eqref{eq:solitonmonotonicity0} with \eqref{eq:solitonmonotonicity3},
\begin{equation*}
\lambda(\nab_{t_2} ) - \epsilon \leq \mathcal{F}_{x_0,t_0}(\nab_{t_2}) \leq
\mathcal{F}_{x_0,t_0 + t_2 - t_1}(\nab_{t_1}) \leq \lambda (\nab_{t_1}).
\end{equation*}
Since this holds for each $\epsilon > 0$ we conclude the desired monotonicity.
The result follows.
\end{proof}
\end{prop}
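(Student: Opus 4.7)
The plan is to combine Hamilton's monotonicity (the $\mathbb{R}^n$ version, Proposition \ref{monform}) with a carefully chosen shift of the base time parameter so that entropy values at two different flow times can be compared via monotonicity along a single flow-dependent $\mathcal{F}$-functional. The key observation is that although $\lambda$ is a supremum, it can be probed by a near-maximizing pair $(x_0, t_0)$ at any flow time.

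Concretely, fix $t_1 < t_2 < T$ and $\varepsilon > 0$, and choose $(x_0, t_0) \in \mathbb{R}^n \times (0, \infty)$ with
\begin{equation*}
\mathcal{F}_{x_0, t_0}(\nab_{t_2}) \geq \lambda(\nab_{t_2}) - \varepsilon.
\end{equation*}
I would then consider the one-parameter family $t \mapsto \mathcal{F}_{x_0,\, t_0 + t_2}(\nab_t, t)$, whose basepoint-time parameter $t_0 + t_2$ is chosen precisely so that at $t = t_2$ the ``$t_0 - t$'' appearing in the shrinker kernel \eqref{eq:shrker} reduces to $t_0$. Thus $\mathcal{F}_{x_0, t_0 + t_2}(\nab_{t_2}, t_2) = \mathcal{F}_{x_0, t_0}(\nab_{t_2})$, while at $t = t_1$ the same functional evaluates to $\mathcal{F}_{x_0,\, t_0 + (t_2 - t_1)}(\nab_{t_1})$, which is bounded above by $\lambda(\nab_{t_1})$ by definition of the supremum. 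Proposition \ref{monform} then yields monotonicity of this one-parameter family in $t$, and chaining these identities gives
\begin{equation*}
\lambda(\nab_{t_2}) - \varepsilon \leq \mathcal{F}_{x_0, t_0}(\nab_{t_2}) \leq \mathcal{F}_{x_0,\, t_0 + (t_2 - t_1)}(\nab_{t_1}) \leq \lambda(\nab_{t_1}).
\end{equation*}
Sending $\varepsilon \to 0$ finishes the proof.

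The polynomial energy growth hypothesis is used in the background to guarantee that $\mathcal{F}_{x_0,t_0}(\nab_t)$ is finite (cf.\ the bound on $I_{\theta}(\nab)$ in \eqref{peg}) so that each inequality in the chain is meaningful, and to ensure the integrations by parts implicit in Proposition \ref{monform} are justified with no boundary contribution at infinity. The main conceptual step, and the only place where one might stumble, is recognizing the time-shift trick: naively one wants to move $(x_0, t_0)$ through spacetime and compare the entropies directly, but the natural objects that are monotone along Yang-Mills flow are $\mathcal{F}_{y_0, s_0}(\nab_t, t)$ with \emph{fixed} $(y_0, s_0)$, so the near-maximizer $(x_0, t_0)$ at $t_2$ must be translated to a fixed-$s_0$ problem by setting $s_0 = t_0 + t_2$. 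Beyond that, the argument is essentially a supremum-passing triviality.
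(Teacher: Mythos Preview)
Your proof is correct and follows essentially the same approach as the paper: pick a near-maximizer $(x_0,t_0)$ at time $t_2$, shift the base time to $t_0+t_2$ so that $\mathcal{F}_{x_0,t_0+t_2}(\nab_t,t)$ is monotone in $t$ by Proposition~\ref{monform} and agrees with $\mathcal{F}_{x_0,t_0}(\nab_{t_2})$ at $t=t_2$, then evaluate at $t=t_1$ and take the supremum. Your explanation of the time-shift trick and the role of the polynomial energy growth hypothesis is in fact more explicit than the paper's version.
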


\subsection{Second variation}\label{ss:secondvariation}

For the computations of the following proposition refer to the note of \S
\ref{ss:firstvariation} regarding the convention on variation parameter
subscripts.  Again, these variational calculations require the variation of the
connection to be in a certain weighted Sobolev space which we suppress.

\begin{prop}[Second variation]\label{zhang2ndvar} Let $\gG_s$, $x_s$ and $t_s$
be one parameter families of connections, points in $\mathbb R^n$, and positive
real numbers respectively, and set
\begin{equation*}
\dot{t}_s := \frac{d t_s}{d s},\hspace{0.1cm} \ddot{t}_s := \frac{d \dot{t}_s}{d
s}, \hspace{0.1cm} \dot{x}_s := \frac{d x_s}{d s}, \hspace{0.1cm} \ddot{x}_s :=
\frac{d \dot{x}_s}{d s}, \hspace{0.1cm} \dot{\gG}_s := \frac{d \gG_s}{d s},
\hspace{0.1cm} \ddot{\gG}_s := \frac{d \dot{\gG}_s}{d s}.
\end{equation*}
Then
\begin{align*}
\begin{split}
\frac{d^2}{d s^2} & \left[ \mathcal{F}_{x_s,t_s} (\nab_s) \right] \\
& = \int_{\bRn}{\left(2(\ddot{t}_s t_s + \dot{t}_s^2)  + t_s^2 (\mathsf{g}_s^2 +
\dot{\mathsf{g}}_s) + 4 \dot{t}_s t_s \mathsf{g}_s \right)|F_s|^2 G_s dV} \\
& \hsp + \int_{\bRn}{\left\langle (8 \dot{t}_s t_s + 2t_s^2
\mathsf{g}_s)\dot{\gG}_s  + 8 t_s^2 \ddot{\gG}_s, S_{x_s,t_s}(\nab_s)
\right\rangle G_s  dV}\\
& \hsp + 4 t_s^2 \int_{\bRn}{ \left\langle \dot{\gG}_s, D_s^* D_s \dot{\gG}_s +
\left( \frac{x-x_s}{2t_s} \right) \hook D_s \dot{\gG}_s  + [\dot{\gG}_s,
F_s]^{\#} \right\rangle G_s dV}\\
& \hsp  -4 t_s \int_{\bRn}{\left\langle \dot{\gG}_s, \left( \dot{t}_s (x-x_s)
+\dot{x}_s \right) \hook F_s \right\rangle G_s dV}.
\end{split}
\end{align*}
where 
\begin{equation}
\mathsf{g}_s :=\left( - \frac{n \dot{t}_s}{2 t_s} + \frac{\dot{t}_s |x -
x_s|^2}{4 t_s^2} + \frac{\langle \dot{x}_s, x - x_s \rangle}{2 t_s} \right).
\end{equation}

\begin{proof}
Prior to computing the main expression we perform some necessary side
computations of differentiation. First observe that
\begin{equation*}
\frac{\del \mathsf{g}}{\del x^i} = \left( \frac{\dot{t}_s}{2 t_s} (x-x_s)^i +
\frac{\dot{x}_s^i}{2t_s} \right).
\end{equation*}
We next take the second variation of $\mathcal{F}_{x_s,t_s}$ and separate the
resulting expression into labeled quantities:
\begin{align}
\begin{split}\label{eq:4part2ndvar}
\frac{d^2}{d s^2} \left[ \int_{\bRn} | F_s|^2 G_s dV  \right]
&= \frac{d}{ds} \left( \int_{\bRn}{ \left( 2 \langle D_s \dot{\gG}_s , F_s
\rangle  + \mathsf{g}_s |F_s|^2 \right) G_s dV}  \right)\\
&=  \int_{\bRn}{ \left( \underset{T_1}{\underbrace{2 \langle \del_s (D_s
\dot{\gG}_s) , F_s \rangle}} + \underset{T_2}{\underbrace{2 \langle D_s
\dot{\gG}_s , D_s \dot{\gG}_s \rangle}} +
\underset{T_3}{\underbrace{(\dot{\mathsf{g}}_s + \mathsf{g}_s^2) |F_s|^2}} +
\underset{T_4}{\underbrace{4 \mathsf{g}_s \langle D \dot{\gG}_s, F_s \rangle }}
\right)G_s dV}.
\end{split}
\end{align}
We first address the $T_1$ quantity. Since
\begin{align*} 
\del_s D_s \dot{\gG}_s &= \del_s \left( \nab_i \dot{\gG}_{j \ga}^{\gb} \right)\\
&= \del_t \left( \del_i \dot{\gG}_{j \ga}^{\gb} - \gG_{i \ga}^{\gd} \dot{\gG}_{j
\gd}^{\gb} + \gG_{i \gd}^{\gb} \dot{\gG}_{j \ga}^{\gd} \right)\\
&=   \del_i \ddot{\gG}_{j \ga}^{\gb} - \dot{\gG}_{i \ga}^{\gd} \dot{\gG}_{j
\gd}^{\gb}  - \gG_{i \ga}^{\gd} \ddot{\gG}_{j \gd}^{\gb} + \dot{\gG}_{i
\gd}^{\gb} \dot{\gG}_{j \ga}^{\gd} + \gG_{i\gd}^{\gb} \ddot{\gG}_{j \ga}^{\gd},
\end{align*}
it follows that
\begin{align*}
\del_s \left( \nab_i \dot{\gG}_{j \ga}^{\gb} - \nab_j \dot{\gG}_{i \ga}^{\gb}
\right) &=  \del_i \ddot{\gG}_{j \ga}^{\gb} - \dot{\gG}_{i \ga}^{\gd}
\dot{\gG}_{j \gd}^{\gb}  - \gG_{i \ga}^{\gd} \ddot{\gG}_{j \gd}^{\gb} +
\dot{\gG}_{i \gd}^{\gb} \dot{\gG}_{j \ga}^{\gd} + \gG_{i \gd}^{\gb}
\ddot{\gG}_{j \ga}^{\gd} \\
& \hsp  - \del_j \ddot{\gG}_{i \ga}^{\gb} + \dot{\gG}_{j \ga}^{\gd} \dot{\gG}_{i
\gd}^{\gb}  + \gG_{j \ga}^{\gd} \ddot{\gG}_{i \gd}^{\gb} - \dot{\gG}_{j
\gd}^{\gb} \dot{\gG}_{i \ga}^{\gd} - \gG_{j \gd}^{\gb} \ddot{\gG}_{i
\ga}^{\gd}\\
&=  D_i \ddot{\gG}_{j \ga}^{\gb} - 2 \dot{\gG}_{i \ga}^{\gd} \dot{\gG}_{j
\gd}^{\gb}  + 2 \dot{\gG}_{i \gd}^{\gb} \dot{\gG}_{j \ga}^{\gd} .
\end{align*}
Now applying this to the expression above,
\begin{align*}
\int_{\bRn}{\langle \del_s (D_s \dot{\gG}_s), F_s \rangle G_s dV_g}  &=
-\int_{\bRn}{\left( \nab_i \ddot{\gG}_{j \ga}^{\gb} - \nab_j \ddot{\gG}_{i
\ga}^{\gb} \right) F_{ij \gb}^{\ga} G dV} - 2 \int_{\bRn}{\left( \dot{\gG}_{i
\ga}^{\gd} \dot{\gG}_{j \gd}^{\gb} - \dot{\gG}_{i \gd}^{\gb} \dot{\gG}_{j
\ga}^{\gd} \right) F_{ij \gb}^{\ga} G dV} \\
 &=  4 \int_{\bRn}{\ddot{\gG}_{j \ga}^{\gb}  \left(\nab_i  F_{ij \gb}^{\ga}
\right) G dV} + 4 \int_{\bRn}{ \left( \ddot{\gG}_{j \ga}^{\gb} \right) F_{ij
\gb}^{\ga} \left(\nab_i   G \right)dV}\\
 & \hsp- 2 \int_{\bRn}{ \left(  \dot{\gG}_{i \ga}^{\gd} \dot{\gG}_{j \gd}^{\gb}
F_{ij \gb}^{\ga} - \dot{\gG}_{i \gd}^{\gb} F_{ij \gb}^{\ga} \dot{\gG}_{j
\ga}^{\gd} \right) G dV}\\
  &=  4 \int_{\bRn}{\ddot{\gG}_{j \ga}^{\gb} \left(\nab_i  F_{ij \gb}^{\ga}
\right) G dV} - 4 \int_{\bRn}{\ddot{\gG}_{j \ga}^{\gb} F_{ij \gb}^{\ga}
\tfrac{(x-x_s)^i}{2 t_s} G dV}\\
 & \hsp- 2 \int_{\bRn}{ \left(  \dot{\gG}_{i \ga}^{\gd} \dot{\gG}_{j \gd}^{\gb}
F_{ij \gb}^{\ga} - \dot{\gG}_{i \gd}^{\gb} F_{ij \gb}^{\ga} \dot{\gG}_{j
\ga}^{\gd} \right) G dV}.
 \end{align*}
 Therefore in coordinate invariant form,
 \begin{align}\label{eq:sT1}
\int_{\bRn}{T_1 G_s dV}  & =  - 8 \int_{\bRn}{ \langle \ddot{\gG}_s, S_{x_s,t_s}
\rangle G_s dV} + 4 \int_{\bRn}{ \langle \dot{\gG}_s,  [\dot{\gG}_s,F_s]^{\#}
\rangle G_s dV}.
\end{align}
Next for the $T_2$ quantity we expand terms and then applying integration by
parts:
\begin{align}
\begin{split}\label{eq:sT2}
\int_{\bRn}{T_2 G_s dV} &= - 2\int_{\bRn}{D_i \dot{\gG}_{j \ga}^{\gb} D_i
\dot{\gG}_{j \gb}^{\ga} G dV} \\
&= - 2\int_{\bRn}{(\nab_i \dot{\gG}_{j \ga}^{\gb} - \nab_j \dot{\gG}_{i
\ga}^{\gb}) D_i \dot{\gG}_{j \gb}^{\ga} G dV} \\
&= - 2 \int_{\bRn}{\left( \nab_i \dot{\gG}_{j \ga}^{\gb} D_i \dot{\gG}_{j
\gb}^{\ga} \right) G dV} +  2 \int_{\bRn}{\left(  \nab_j \dot{\gG}_{i \ga}^{\gb}
D_i \dot{\gG}_{j \gb}^{\ga} \right) G dV} \\
&= - 2\int_{\bRn}{\left( \nab_i \dot{\gG}_{j \ga}^{\gb} D_i \dot{\gG}_{j
\gb}^{\ga} \right) G dV} + 2\int_{\bRn}{\left(  \nab_i \dot{\gG}_{j \ga}^{\gb}
D_j \dot{\gG}_{i \gb}^{\ga} \right) G dV} \\
&=2 \int_{\bRn}{ \dot{\gG}_{j \ga}^{\gb} \left(\nab_i  D_i \dot{\gG}_{j
\gb}^{\ga} - \left( \tfrac{x-x_s}{2 t_s} \right)^i D_i \dot{\gG}_{j \gb}^{\ga}
\right) G dV} \\
& \hsp -2 \int_{\bRn}{ \dot{\gG}_{j \ga}^{\gb} \left( \nab_i  D_j \dot{\gG}_{i
\gb}^{\ga} - \left( \tfrac{x-x_s}{2 t_s} \right)^i D_j \dot{\gG}_{i \gb}^{\ga}
\right) G dV}\\
&= 2 \int_{\bRn}{ \dot{\gG}_{j \ga}^{\gb} \left(\nab_i  \left( D_i \dot{\gG}_{j
\gb}^{\ga} - D_j \dot{\gG}_{i \gb}^{\ga} \right) + \left( \left( \tfrac{x-x_s}{2
t_s} \right)^i D_j \dot{\gG}_{i \gb}^{\ga} - \left( \tfrac{x-x_s}{2 t_s}
\right)^i D_i \dot{\gG}_{j \gb}^{\ga} \right) \right) G dV} \\
&=2\int_{\bRn}{ \dot{\gG}_{j \ga}^{\gb} \left(\nab_i  D_i \dot{\gG}_{j
\gb}^{\ga} - \left( \tfrac{x-x_s}{t_s} \right)^i (D_i \dot{\gG}_{j \gb}^{\ga}
)\right) G dV} \\
&= 4\int_{\bRn}{ \left\langle \dot{\gG}_s, D_s^* D_s \dot{\gG}_s + \left(
\tfrac{x-x_s}{2t_s} \right) \hook D_s \dot{\gG}_s \right\rangle G_s dV}.
\end{split}
\end{align}
Lastly we expand the $T_4$ quantity by expanding and integrating by parts:
\begin{align}
\begin{split}\label{eq:sT4}
\int_{\bRn}{T_4 G_s dV}
&= 4 \int_{\bRn}{ \left(\mathsf{g}_s \langle D_s \dot{\gG}_s, F_s \rangle
\right) G_s dV}\\
&= - 4 \int_{\bRn}{ \left( \mathsf{g} ( D_i \dot{\gG}_{j \ga}^{\gb}) F_{ij
\gb}^{\ga} \right) G dV}\\
&= -8 \int_{\bRn}{ \left( \mathsf{g} ( \nab_i \dot{\gG}_{j \ga}^{\gb} ) F_{ij
\gb}^{\ga} \right) G dV}\\
&= 8 \int_{\bRn}{  \left( (\del_i \mathsf{g})  \tfrac{(x-x_s)^i}{2 t_s} \right) 
 \dot{\gG}_{j \ga}^{\gb} F_{ij \gb}^{\ga} G dV} +8 \int_{\bRn}{  \mathsf{g}
\dot{\gG}_{j \ga}^{\gb}  (\nab_i F_{ij \gb}^{\ga}) G dV}\\
&= 8 \int_{\bRn}{\left\langle \dot{\gG}_s, \mathsf{g}_s \left( D_s^*F_s + \left(
\tfrac{x-x_s}{2 t_s} \hook F_s \right) - (\del \mathsf{g}_s) \hook F_s \right)
\right\rangle G_s dV}\\
&= 8  \int_{\bRn}{\left\langle \dot{\gG}_s, \mathsf{g}_s\left( S_{x_s,t_s}-
(\del \mathsf{g}_s) \hook F_s \right) \right\rangle G_s dV}.
\end{split}
\end{align}
Combining all quantities \eqref{eq:sT1}, \eqref{eq:sT2}, and \eqref{eq:sT4}
together into \eqref{eq:4part2ndvar} yields
\begin{align}
\begin{split}\label{eq:2ndvar}
\frac{d^2}{d s^2} \left[  \int_{\bRn} | F_s|^2 G_s dV \right]
 &= 8 \int_{\bRn}{ \langle \ddot{\gG}_s,S_{x_s,t_s} \rangle G_s dV} + 2
\int_{\bRn}{\left\langle  \dot{\gG}_s, \mathsf{g}_s S_{x_s,t_s} \right\rangle
G_s dV} + \int_{\bRn}{(\dot{\mathsf{g}}_s + \mathsf{g}_s^2) |F_s|^2 G_s dV}\\
& \hsp + 4\int_{\bRn}{ \left\langle \dot{\gG}_s, D_s^* D_s \dot{\gG}_s + \left(
\tfrac{x-x_s}{2t_s} \right) \hook D_s \dot{\gG}_s  + [\dot{\gG}_s,F_s]^{\#}
\right\rangle G_s dV}\\
& \hsp - 4 \int_{\bRn}{\left\langle \dot{\gG}_s , \left( \tfrac{\dot{t}_s}{t_s}
(x-x_s) + \tfrac{\dot{x}_s}{t_s} \right) \hook F_s \right\rangle G_s dV}.
\end{split}
\end{align}
Now we incorporate the temporal parameter and differentiate the quantity
\begin{align}
\begin{split}\label{eq:z2ndvart2Fexp}
\frac{d^2}{d s^2} \left[ \mathcal{F}_{x_s,t_s} \right] &= \frac{\del}{\del s}
\left[ 2 \dot{t}_s t_s \mathcal{F}_{x_s,t_s} + t_s^2 \frac{\del}{\del s} \left[
\mathcal{F}_{x_s,t_s} \right] \right]
= 2 (\ddot{t}_s t_s + \dot{t}_s^2 ) \mathcal{F}_{x_s,t_s} + 4 \dot{t}_s t_s
\frac{\del}{\del s}[\mathcal{F}_{x_s,t_s}]  + t_s^2 \frac{\del^2}{\del s^2}
\left[ \mathcal{F}_{x_s,t_s} \right].
\end{split}
\end{align} 
We insert the terms from \eqref{eq:2ndvar} and \eqref{eq:zYMfirstvar} into
\eqref{eq:z2ndvart2Fexp} and obtain
\begin{align*}
\begin{split}
\frac{d^2}{d s^2} \left[ \mathcal{F}_{x_s,t_s} \right]
 &= 2(\ddot{t}_s t_s + \dot{t}_s^2) \int_{\bRn}{|F_s|^2 G_s dV} + 4 \dot{t}_s
t_s \int_{\bRn}{| F_s |^2 \mathsf{g}_s G_s dV} + 8 \dot{t}_s t_s
\int_{\bRn}{\left\langle \dot{\gG}_s, S_{x_s,t_s}(\nab_s) \right\rangle G_s 
dV}\\
& \hsp  8 t_s^2 \int_{\bRn}{ \langle \ddot{\gG}_s, S_{x_s,t_s}(\nab_s) \rangle
G_s dV} + 2 t_s^2 \int_{\bRn}{\left\langle  \dot{\gG}_s, \mathsf{g}_s
S_{x_s,t_s}(\nab_s) \right\rangle G_s dV} + t_s^2
\int_{\bRn}{(\dot{\mathsf{g}}_s + \mathsf{g}_s^2) |F_s|^2 G_s dV}\\
& \hsp + 4 t_s^2 \int_{\bRn}{ \left\langle \dot{\gG}_s, D_s^* D_s \dot{\gG} +
\left( \frac{x-x_s}{2t_s} \right) \hook D_s \dot{\gG}_s  +
[\dot{\gG}_s,F_s]^{\#} \right\rangle G_s dV}\\
& \hsp -4 t_s^2 \int_{\bRn}{\left\langle \dot{\gG}_s, \left( \frac{\dot{t}_s}{
t_s} (x-x_s) + \frac{\dot{x}_s}{t_s} \right) \hook F_s \right\rangle G_s dV}\\
& = \int_{\bRn}{\left(2(\ddot{t}_s t_s + \dot{t}_s^2)  + t_s^2 (\mathsf{g}_s^2 +
\dot{\mathsf{g}}_s) + 4 \dot{t}_s t_s \mathsf{g}_s \right)|F_s|^2 G_s dV} \\
& \hsp + \int_{\bRn}{\left\langle \dot{\gG}_s, (8 \dot{t}_s t_s + 2t_s^2
\mathsf{g}_s) S_{x_s,t_s}(\nab_s) \right\rangle G_s  dV} + 8 t_s^2 \int_{\bRn}{
\langle \ddot{\gG}_s, S_{x_s,t_s}(\nab_s) \rangle G_s dV}\\
& \hsp + 4 t_s^2 \int_{\bRn}{ \left\langle \dot{\gG}_s, D_s^* D_s \dot{\gG}_s +
\left( \frac{x-x_s}{2t_s} \right) \hook D_s \dot{\gG}_s  + [\dot{\gG}_s,
F_s]^{\#} \right\rangle G_s dV}\\
& \hsp  -4 t_s \int_{\bRn}{\left\langle \dot{\gG}_s, \left( \dot{t}_s (x-x_s) +
\dot{x}_s \right) \hook F_s \right\rangle G_s dV}.
\end{split}
\end{align*}
The result follows.
\end{proof}
\end{prop}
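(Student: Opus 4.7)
The plan is to differentiate the first variation formula \eqref{eq:zYMfirstvar} once more in $s$, treating the $t_s^2$ prefactor via the product rule. Writing $\FF_{x_s,t_s}(\N_s) = t_s^2 I(s)$ with $I(s) := \int_{\bRn}|F_s|^2 G_s\,dV$, one has
\[
\tfrac{d^2}{ds^2}\FF_{x_s,t_s}(\N_s) = 2(\ddot t_s t_s + \dot t_s^2)\,I(s) + 4\dot t_s t_s\,I'(s) + t_s^2 I''(s),
\]
so the work reduces to computing $I''$. From the first variation one has $I' = \int(2\langle D_s\dot\gG_s, F_s\rangle + \mathsf{g}_s|F_s|^2)G_s\,dV$, where $\mathsf{g}_s = \del_s\log G_s$ is exactly the quantity isolated in the statement. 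Differentiating again and distributing the $s$-derivative over the four factors produces four integrand terms $T_1 = 2\langle \del_s(D_s\dot\gG_s), F_s\rangle$, $T_2 = 2|D_s\dot\gG_s|^2$, $T_3 = (\dot{\mathsf{g}}_s + \mathsf{g}_s^2)|F_s|^2$, and $T_4 = 4\mathsf{g}_s\langle D_s\dot\gG_s, F_s\rangle$, which I will analyze individually.

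The term $T_1$ requires computing $\del_s(D_s\dot\gG_s) = D_s\ddot\gG_s + [\dot\gG_s,\dot\gG_s]^\#$, where the bracket piece comes from the $s$-derivative of the Christoffel contributions to $\N_s$. Integration by parts against $F_s$, combined with the key identity $\nab_i G_s = -\tfrac{(x-x_s)^i}{2t_s}G_s$, packages the $D\ddot\gG_s$ contribution into $\langle\ddot\gG_s,\, D_s^*F_s + \tfrac{x-x_s}{2t_s}\hook F_s\rangle = \langle\ddot\gG_s, S_{x_s,t_s}(\N_s)\rangle$, while the nonlinear piece becomes $\langle \dot\gG_s, [\dot\gG_s, F_s]^\#\rangle$. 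For $T_2$, integration by parts together with antisymmetry of $D\dot\gG$ in its two form-indices converts it to $\langle\dot\gG_s, D_s^*D_s\dot\gG_s + \tfrac{x-x_s}{2t_s}\hook D_s\dot\gG_s\rangle$, where the drift term again issues from $\nab G_s$.

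For $T_4$, integration by parts of $D\dot\gG$ against $\mathsf{g}_s F_s G_s$ produces $\langle\dot\gG_s, \mathsf{g}_s S_{x_s,t_s}(\N_s)\rangle$ plus a correction $-\langle\dot\gG_s, (\del \mathsf{g}_s)\hook F_s\rangle$; direct computation gives $\del\mathsf{g}_s = \tfrac{\dot t_s}{2t_s}(x-x_s) + \tfrac{\dot x_s}{2t_s}$, which after multiplication by the outer $t_s^2$ and sign combination yields the stated drift term $(\dot t_s(x-x_s) + \dot x_s)\hook F_s$. The term $T_3$ contributes directly without IBP. Multiplying $I''$ by $t_s^2$ and adding the two product-rule contributions $2(\ddot t_s t_s + \dot t_s^2)I$ and $4\dot t_s t_s I'$, the $\mathsf{g}_s$-dependent coefficients recombine into the factor $(8\dot t_s t_s + 2t_s^2\mathsf{g}_s)$ multiplying $\langle\dot\gG_s, S_{x_s,t_s}\rangle$, and the scalar prefactor of $|F|^2 G_s$ becomes the claimed $2(\ddot t_s t_s + \dot t_s^2) + t_s^2(\mathsf{g}_s^2 + \dot{\mathsf{g}}_s) + 4\dot t_s t_s\mathsf{g}_s$.

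The main obstacle is the bookkeeping: tracking signs through multiple integrations by parts, correctly identifying the nonlinear Lie-bracket contribution to $\del_s D_s\dot\gG_s$ and confirming that after IBP it appears precisely as $\langle\dot\gG_s, [\dot\gG_s, F_s]^\#\rangle$ (rather than some variant such as $[\dot\gG_s,\dot\gG_s]^\# \cdot F_s$), and ensuring that every $\tfrac{x-x_s}{2t_s}$ drift produced by $\nab G_s$ pairs with the appropriate $D^*$ or $D$ so as to assemble the soliton operator $S_{x_s,t_s}$. The weighted-Sobolev hypothesis on $\dot\gG_s, \ddot\gG_s$ alluded to in the statement is what licenses each IBP step, so that no boundary term at infinity appears; once that is granted, the argument reduces to a careful application of the product rule, the second Bianchi identity, and Gaussian-weighted integration by parts.
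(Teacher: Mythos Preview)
Your proposal is correct and follows essentially the same route as the paper: the identical product-rule split $\tfrac{d^2}{ds^2}(t_s^2 I) = 2(\ddot t_s t_s + \dot t_s^2)I + 4\dot t_s t_s I' + t_s^2 I''$, the same four-term decomposition $T_1,\dots,T_4$ of $I''$, and the same Gaussian-weighted integrations by parts to assemble $S_{x_s,t_s}$ and the $L$-type operator. One notational caution: in the paper's conventions $[\dot\gG_s,\dot\gG_s]^{\#}$ vanishes identically, so the nonlinear correction to $\del_s(D_s\dot\gG_s)$ is better written as the wedge-bracket $[\dot\gG_s\wedge\dot\gG_s]$ (componentwise $2[\dot\gG_i,\dot\gG_j]$), which then pairs with $F_s$ to give the stated $\langle\dot\gG_s,[\dot\gG_s,F_s]^{\#}\rangle$ by cyclicity of the trace, exactly as you anticipated.
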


We now specialize this result to the case of solitons. For notational clarity,
after evaluating at $s=0$, we excise the subscript except for those of the base
point $(x_0,t_0)$ and the heat kernel. 

\begin{cor}[Second Variation for Shrinkers]\label{cor:2ndvarshrink} Suppose that
$\nab$ is an $(x_0,t_0)$-soliton.  Then 
\begin{align*}
\begin{split}
\FF''_{x_0,t_0}(\dot{t},\dot{x},\dot{\N}) &= \left. \frac{d^2}{d s^2} \left[
\mathcal{F}_{x_s,t_s}(\nab_s) \right]
\right|_{s=0}\\
& = - 4  t_0 \dot{t}_0^2 \int_{\bRn}{|D^*F|^2 G_0 dV} - 2 t \int_{\bRn}{|F \hook
\dot{x}_0|^2 G_0 dV}\\
& \hsp + 4 t^2 \int_{\bRn}{ \left\langle \dot{\gG}, D^* D \dot{\gG} + \left(
\frac{x-x_0}{2t_0} \right) \hook D \dot{\gG}  + [\dot{\gG}, F]^{\#}
\right\rangle G_0 dV}\\
& \hsp -4 t_0 \int_{\bRn}{\left\langle \dot{\gG}, \left(\dot{t}_0 (x-x_0) +
\dot{x}_0 \right) \hook F \right\rangle G_0 dV}.
\end{split}
\end{align*}

\begin{proof} 
We first compute to identities concerning $\mathsf{g}_s$ to be used for the
following argument. First we have
\begin{align}
\begin{split}\label{eq:delgid}
\frac{\del \mathsf{g}}{\del s}  &= - \frac{n}{2} \left( \frac{\ddot{t}_s}{t_s} -
\frac{\dot{t}_s^2}{t_s^2} \right) + \frac{|x-x_s|^2}{4} \left(
\frac{\ddot{t}_s}{t^2_s} - \frac{2 \dot{t}_s^2}{t_s^3} \right) - \frac{\dot{t}_s
\langle \dot{x}_s, x-x_s \rangle}{t^2_s} + \frac{\langle \ddot{x}_s , x-x_s
\rangle}{2t_s} - \frac{|\dot{x}_s|^2}{2t_s}.
\end{split}
\end{align}
Then 
\begin{align}
\begin{split}\label{eq:g2id}
\mathsf{g}^2_s &= \left( - \frac{n \dot{t}_s}{2 t_s}+ \frac{\dot{t}_s |x -
x_s|^2}{4 t_s^2} + \frac{\langle \dot{x}_s, x - x_s \rangle}{2 t_s} \right)^2\\
&= \frac{n^2 \dot{t}_s^2}{4 t_s^2} - \frac{n \dot{t}_s^2 |x-x_s|^2}{4 t_s^3} -
\frac{n \dot{t}_s \langle \dot{x}_s , x-x_s \rangle}{2 t_s^2} + \frac{\dot{t}_s
|x-x_s|^2 \langle \dot{x}_s, x-x_s \rangle}{4 t_s^3} + \frac{\dot{t}_s^2
|x-x_s|^4}{16 t_s^4} + \frac{\langle \dot{x}_s, x - x_s \rangle^2}{4 t_s^2}.
\end{split}
\end{align}
Adding the two quantities \eqref{eq:delgid} and \eqref{eq:g2id} and labeling
with the corresponding items of Corollary \ref{5partlemma} we obtain
\begin{align*}
\dot{\mathsf{g}} + \mathsf{g}^2&= 
\left(  - \frac{n \ddot{t}_0}{2 t_0} + \frac{n \dot{t}_0^2}{2 t_0^2} + \frac{n^2
\dot{t}_0^2}{4 t_0^2} \right) + \left( \frac{|x-x_0|^2}{4 t_0} \left(
\frac{\ddot{t}_0}{t_0} - \frac{n \dot{t}_0^2}{t_0^2} - \frac{2
\dot{t}_0^2}{t_0^2} \right) \right)_a + \left( \langle \dot{x}_0, x-x_0 \rangle
\left( - \frac{\dot{t}_0}{t^2_0} - \frac{n \dot{t}_0}{2 t_0^2} \right) \right)_b
\\
& \hsp  + \left( |x-x_0|^2 \langle \dot{x}_0, x-x_0 \rangle \left(
\frac{\dot{t}_0}{4 t_0^3} \right) \right)_d + \left( \langle \ddot{x}_0, x-x_0
\rangle \frac{1}{2t_0} \right)_b + \left( |\dot{x}_0|^2 \left( \frac{-1}{2t_0}
\right) \right)\\
& \hsp + \left( |x-x_0|^4 \left( \frac{\dot{t}_0^2 }{16 t_0^4} \right) \right)_c
+ \left( \langle \dot{x}_0, x - x_0 \rangle^2 \left(  \frac{1}{4 t_0^2} \right)
\right)_e.
\end{align*}
Applying the said identities of Corollary \ref{5partlemma} we obtain
\begin{align*}
\int_{\bRn}{\left( \dot{\mathsf{g}} +\mathsf{g}^2 \right) |F|^2 G_s dV} 
&= \int_{\bRn}{\left( \left(  - \frac{n \ddot{t}_0}{2 t_0} + \frac{n
\dot{t}_0^2}{2 t_0^2} + \frac{n^2 \dot{t}_0^2}{4 t_0^2} \right) +  \left(
\frac{n-4}{2} \left( \frac{\ddot{t}_0}{t_0} - \frac{n \dot{t}_0^2}{t_0^2} -
\frac{2 \dot{t}_0^2}{t_0^2} \right) \right) \right) |F|^2 G_0 dV}\\
&\hsp + \int_{\bRn}{\left( \left( |\dot{x}_0|^2 \left( \frac{-1}{2t_0} \right)
\right) + 4(n-2)(n-4) t_0^2  \left( \frac{\dot{t}_0^2}{16 t_0^4} \right) +
\left( \frac{1}{2 t_0} \right)  | \dot{x}_0 |^2 \right) |F|^2 G_0 dV}\\
& \hsp - 4 \left( \frac{\dot{t}_0^2}{t_0} \right) \int_{\bRn}{|D^*F|^2 G_0 dV} -
8 t_0 \left( \frac{1}{4 t_0^2} \right) \int_{\bRn}{|F \hook \dot{x}_0|^2 G_0
dV}\\
&= \int_{\bRn}{\left( \left(  - \frac{n \ddot{t}_0}{2 t_0} + \frac{n
\dot{t}_0^2}{2 t_0^2} + \frac{n^2 \dot{t}_0^2}{4 t_0^2} \right) +  \left(
\frac{n-4}{2} \left( \frac{\ddot{t}_0}{t_0} - \frac{n \dot{t}_0^2}{t_0^2} -
\frac{2 \dot{t}_0^2}{t_0^2} \right) \right) \right) |F_s|^2 G_0 dV}\\
&\hsp + \int_{\bRn}{\left( (n-2)(n-4)  \left( \frac{\dot{t}_0^2}{4 t_0^2}
\right) \right) |F|^2 G_0 dV}\\
& \hsp - 4 \left( \frac{\dot{t}_0^2}{t_0} \right) \int_{\bRn}{|D^*F|^2 G_0 dV} -
8 t_0 \left( \frac{1}{4 t_0^2} \right) \int_{\bRn}{|F \hook \dot{x}_0|^2 G_0
dV}.
\end{align*}
We collect and simplify the coefficients of the integrands multiplied against
$|F|^2 G$ of the first and second line to obtain
\begin{align*}
\left( \frac{4 \dot{t}_0^2}{ t_0^2} - \frac{2\ddot{t}_0}{t_0} + \frac{2
\dot{t}_0^2}{t_0^2} \right) &+ n \left( - \frac{\ddot{t}_0}{2 t_0} +
\frac{\dot{t}_0^2}{2 t_0^2} +  \frac{2 \dot{t}_0^2}{t_0^2} + \frac{\ddot{t}_0}{2
t_0} - \frac{\dot{t}_0^2}{t_0^2} - \frac{3 \dot{t}_0^2}{2 t_0^2} \right) + n^2
\left( \frac{\dot{t}_0^2 }{4 t^2_0} - \frac{\dot{t}_0^2}{2 t_0^2} +
\frac{\dot{t}_0^2}{4 t_0^2} \right) \\
&  =\frac{6 \dot{t}_0^2}{ t_0^2} - \frac{2\ddot{t}_0}{t_0}.
\end{align*}
Therefore we conclude that
\begin{align*}
t_0^2 \int_{\bRn}{\left( \dot{\mathsf{g}} +\mathsf{g}^2 \right) |F|^2 G dV} &=
\int_{\bRn}{\left( 6\dot{t}^2_0 - 2 \ddot{t}_0 t_0 \right) |F|^2 G dV} - 4  t_0
\dot{t}_0^2 \int_{\bRn}{|D^*F|^2 G dV} - 2 t_0\int_{\bRn}{|F \hook \dot{x}_0|^2
G dV}.
\end{align*}
We combine terms, apply Corollary \eqref{5partlemma} (b) and then (a),
\begin{align*}
\int_{\bRn}&{\left(2(\ddot{t}_0 t_0 + \dot{t}_0^2)  + t_0^2 (\mathsf{g}^2 +
\dot{\mathsf{g}}) + 4 \dot{t}_0 t_0 \mathsf{g} \right)|F|^2 G_0 dV} \\
&= \int_{\bRn}{\left(2 \dot{t}_0^2  -2 n \dot{t}_0^2 + \frac{\dot{t}_0^2 |x -
x_0|^2}{ t_0}+ \dot{t}_0 \langle \dot{x}_0, x - x_0 \rangle \right)  |F|^2 G_0
dV}  + \int_{\bRn}{ 6 \dot{t}_0^2 |F|^2 G_0 dV}\\
& \hsp - 4  t_0 \dot{t}_0^2 \int_{\bRn}{|D^*F|^2 G_0 dV} - 2 t_0\int_{\bRn}{|F
\hook \dot{x}_0|^2 G_0 dV}\\
&=- 4  t_0 \dot{t}_0^2 \int_{\bRn}{|D^*F|^2 G_0 dV} - 2 t_0\int_{\bRn}{|F \hook
\dot{x}_0|^2 G_0 dV}.
\end{align*}
Therefore we conclude that
\begin{align*}
\begin{split}
\left. \frac{d^2}{d s^2} \left[ \mathcal{F}_{x_s,t_s}(\nab_s) \right]
\right|_{s=0} & = - 4  t_0 \dot{t}_0^2 \int_{\bRn}{|D^*F|^2 G_0 dV} - 2 t_0
\int_{\bRn}{|F \hook \dot{x}_0|^2 G_0 dV}\\
& \hsp + 4 t_0^2 \int_{\bRn}{ \left\langle \dot{\gG}, D^* D \dot{\gG} + \left(
\frac{x-x_0}{2t_0} \right) \hook D \dot{\gG}  + [\dot{\gG}, F]^{\#}
\right\rangle G_0 dV}\\
& \hsp -4 t_0^2 \int_{\bRn}{\left\langle \dot{\gG}, \left( \dot{t}_0 (x-x_0) +
\dot{x}_0 \right) \hook F \right\rangle G_0 dV}.
\end{split}
\end{align*}
The result follows.
\end{proof}
\end{cor}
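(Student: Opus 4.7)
The plan is to specialize Proposition \ref{zhang2ndvar} to an $(x_0,t_0)$-soliton by evaluating at $s=0$ and exploiting the defining equation $S_{x_0,t_0}(\N)=0$. Immediately this kills the three terms in the general formula carrying the $S_{x_s,t_s}(\N_s)$ factor (two containing $\ddot{\gG}$ and $\dot{\gG}\mathsf{g}_s$, and the $8\dot{t}_s t_s$ coupling between the variation and $S$). What remains after this first reduction are (i) the pure scaling piece $\int\bigl(2(\ddot{t}_0 t_0+\dot{t}_0^2)+t_0^2(\dot{\mathsf{g}}+\mathsf{g}^2)+4\dot{t}_0 t_0\mathsf{g}\bigr)|F|^2 G_0\,dV$, (ii) the unchanged term $4t_0^2\int\langle\dot{\gG}, D^*D\dot{\gG}+\tfrac{x-x_0}{2t_0}\hook D\dot{\gG}+[\dot{\gG},F]^{\#}\rangle G_0\,dV$, and (iii) the translation/time coupling $-4t_0\int\langle\dot{\gG},(\dot{t}_0(x-x_0)+\dot{x}_0)\hook F\rangle G_0\,dV$. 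The latter two already appear in the claimed formula, so the real work is to reduce (i) to $-4t_0\dot{t}_0^2\int|D^*F|^2 G_0\,dV-2t_0\int|F\hook\dot{x}_0|^2 G_0\,dV$.

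To carry out this reduction I will expand $\mathsf{g}_s$ and $\dot{\mathsf{g}}_s$ from their definitions and collect $\mathsf{g}^2+\dot{\mathsf{g}}$ as a linear combination of the monomials $1$, $|x-x_0|^2$, $\langle\dot{x}_0,x-x_0\rangle$, $|x-x_0|^2\langle\dot{x}_0,x-x_0\rangle$, $|x-x_0|^4$, $\langle\dot{x}_0,x-x_0\rangle^2$, $\langle\ddot{x}_0,x-x_0\rangle$, and $|\dot{x}_0|^2$, each multiplied by a rational expression in $\dot{t}_0,\ddot{t}_0,t_0,n$. For each such monomial, I apply the matching identity from Corollary \ref{5partlemma}: part (a) handles the $|x-x_0|^2$ coefficient, parts (b) the linear $\langle\dot{x}_0,x-x_0\rangle$ and $\langle\ddot{x}_0,x-x_0\rangle$ coefficients, part (c) converts the $|x-x_0|^4$ term into a combination of $\int|F|^2G_0$ and $\int|D^*F|^2G_0$, part (d) annihilates the $|x-x_0|^2\langle\dot{x}_0,x-x_0\rangle$ term, and part (e) converts $\langle\dot{x}_0,x-x_0\rangle^2$ into $|\dot{x}_0|^2|F|^2$ and $|F\hook\dot{x}_0|^2$ pieces. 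The $4\dot{t}_0 t_0\int|F|^2\mathsf{g}\, G_0\,dV$ term is then treated the same way, using only parts (a) and (b).

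After these substitutions the $|D^*F|^2$ and $|F\hook\dot{x}_0|^2$ integrals emerge with coefficients $-4t_0\dot{t}_0^2$ and $-2t_0$ respectively. What is left is a coefficient of $\int|F|^2 G_0\,dV$, a rational function of $n,t_0,\dot{t}_0,\ddot{t}_0$. I then verify this residual coefficient cancels exactly against the $2(\ddot{t}_0 t_0+\dot{t}_0^2)$ prefactor, by collecting separately the $\ddot{t}_0$, $\dot{t}_0^2$, $n\dot{t}_0^2/t_0$ and $n^2\dot{t}_0^2/t_0$ contributions and showing each sum is zero.

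The main obstacle is precisely this last bookkeeping step: one has roughly a dozen contributions to the coefficient of $\int|F|^2G_0\,dV$, coming from the direct expansion, the $|x-x_0|^2$ replacement via (a), the $|x-x_0|^4$ replacement via (c), and the $\langle\dot{x}_0,x-x_0\rangle^2$ replacement via (e); a small arithmetic slip in any one of them would leave a spurious $|F|^2$ term. The remainder of the argument is formal manipulation, so confidence in the final statement hinges on carrying out this cancellation carefully.
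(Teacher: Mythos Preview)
Your proposal is correct and follows essentially the same approach as the paper: specialize Proposition \ref{zhang2ndvar} at $s=0$, use $S_{x_0,t_0}(\N)=0$ to kill the soliton terms, expand $\dot{\mathsf{g}}+\mathsf{g}^2$ into monomials in $|x-x_0|^2$, $\langle\dot{x}_0,x-x_0\rangle$, etc., apply parts (a)--(e) of Corollary \ref{5partlemma} term by term, and then verify that the residual coefficient of $\int|F|^2 G_0\,dV$ cancels against the $2(\ddot{t}_0 t_0+\dot{t}_0^2)+4\dot{t}_0 t_0\mathsf{g}$ contributions. The paper carries out exactly this bookkeeping, so your identification of the cancellation step as the only genuine obstacle is accurate.
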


\section{\texorpdfstring{$\mathcal F$}{F}-stability and gap theorem}
\label{stability}

In this section we establish a criterion for checking entropy stability (Theorem
\ref{stabthm}) and also prove a gap theorem for shrinkers (Theorem
\ref{gapthm}).  The basic observation behind the stability condition is that for
a shrinker, the second variation operator $L$ \emph{always} has negative
eigenvalues, one corresponding to the Yang-Mills flow direction itself, which is
the same as moving in time while scaling, the other corresponding to translation
in space.  This in some sense is the whole reason for explicitly including the
space and time parameters in the definition of entropy, as we then show in
Theorem \ref{stabthm} that these directions can be accounted for by an
appropriate choice of variation in the basepoint.

Recalling Corollary \ref{cor:2ndvarshrink} we define the operator $L_{x_0,t_0}$
by
\begin{align*}
\begin{split}
L_{x_0,t_0}: \Lambda^1(\End E) \to \Lambda^1(\End E): B \mapsto D^* D B + \left(
\frac{x-x_0}{2t_0} \right) \hook D B  + [B, F]^{\#}.\\
\end{split}
\end{align*}
In particular, we set $L := L_{0,1}$. By the Bochner formula this is equal to
\begin{align*}
\begin{split}
L_{x_0,t_0}(B) = - \lap B - \N D^* B - 2 \left[B ,F \right]^{\#}.
\end{split}
\end{align*}
We ultimately only want to apply $L$ to elements of an appropriate weighted
Sobolev space.  For a given $\nab \in \mathfrak{S}$ set
\begin{equation} \label{Wdef}
W_{\N}^{2,2} := \left\{ B \in \Lambda^{1}(\End E) : \int_{\bRn}{\left(|B|^2 +
|\nab B |^2 + |LB|^2 \right) e^{\frac{-|x|^2}{4}}dV} < \infty\right\} . 
\end{equation}

\subsection{Second variation operator}

\begin{defn} \label{stabdefn} A soliton $\N$ is called $\FF$-stable if for any
$B \in W_{\N}^{2,2}$ there exist a real number $\gs$ and a constant vector field
$V$ such that $\mathcal F''_{0,1}(q,V,B) \geq 0$.
\end{defn}

\begin{lemma}\label{lem:VhashFgen}
Let $V = V^i \del_i$ be some vector field. Then
\begin{align}
\begin{split}\label{eq:VhashFgen}
L(V\hook F)_{k \ga}^{\gb}& = (\N_p \N_k V^i) F_{ip \ga}^{\gb} -(\N_p \N_p V^i)
F_{ik\ga}^{\gb}- \tfrac{V^i }{2} F_{i k \ga}^{\gb} \\
& \hsp - (\N_p V^i )\left( (\N_p F_{ik\ga}^{\gb})  + (\N_i F_{pk \ga}^{\gb} )  -
\tfrac{x_p}{2} F_{ik\ga}^{\gb}  \right).
\end{split}
\end{align}

\begin{proof}
We compute the first term of the $L$ operator and obtain
\begin{align}
\begin{split}\label{eq:DD*VhashF}
D^*D \lbr V \hook F \rbr_{k \ga}^{\gb}
&= - \N_p D_p \lbr V \hook F \rbr\\
& = - \N_p \N_p \lbr V^i F_{ik\ga}^{\gb} \rbr + \N_p \N_k \lbr V^i F_{ip
\ga}^{\gb} \rbr \\
&= - \N_p \lbr (\N_p V^i)F_{ik\ga}^{\gb} + V^i (\N_p F_{ik\ga}^{\gb})  - (\N_k
V^i ) F_{ip \ga}^{\gb} - V^i (\N_k F_{ip \ga})\rbr \\
& = (\N_p \N_k V^i) F_{ip \ga}^{\gb} -(\N_p \N_p V^i) F_{ik\ga}^{\gb} +
\left[V^i (\N_p \N_k F_{ip\ga}^{\gb}) - V^i ( \N_p \N_p F_{ik\ga}^{\gb})
\right]_T\\
& \hsp - 2(\N_p V^i )(\N_p F_{ik\ga}^{\gb})+ (\N_p V^i)(\N_k F_{ip \ga}^{\gb}) +
(\N_k V^i)( \N_p F_{ip\ga}^{\gb}).
\end{split}
\end{align}
Now we manipulate $T$. Observe that
\begin{align*}
T &= - V^i \left( \N_p \N_p   F_{i k \ga}^{\gb}) - \N_p \N_k F_{i p \ga}^{\gb}
\right)\\
&= - V^i \left( - \N_p  \lbr \N_k  F_{p i \ga}^{\gb} + \N_i   F_{k p \ga}^{\gb}
\rbr - \N_p \N_k F_{i p \ga}^{\gb} \right)\\
&=  V^i \left(  \N_p \N_i   F_{k p \ga}^{\gb}\right)\\
&=- V^i  (\N_i \N_p  F_{pk \ga}^{\gb}) + V_i [\N_p, \N_i ]F_{kp \ga}^{\gb}\\
&= - V^i  (\N_i \N_p   F_{pk \ga}^{\gb}) + V_i (F_{pi \gd}^{\gb}F_{kp \ga}^{\gd}
+ F_{pi \ga}^{\gd}F_{kp \gd}^{\gb})\\
&= V^i  (\N_i (D^*F)_{k \ga}^{\gb}) -  V_i (F_{ip \gd}^{\gb}F_{kp \ga}^{\gd} -
F_{ip \ga}^{\gd}F_{kp \gd}^{\gb})\\
&= - V^i  \N_i \lbr \tfrac{x^p}{2} F_{p k \ga}^{\gb} \rbr + ( [F, (V \hook
F)]^{\#})_{k \ga}^{\gb}\\
&=-  \tfrac{ V^i }{2} \left( F_{i k \ga}^{\gb} - x_p \N_i F_{p k \ga}^{\gb}
\right) + ( [F, (V \hook F)]^{\#})_{k \ga}^{\gb}\\
&=- \tfrac{V^i }{2} F_{i k \ga}^{\gb} + \tfrac{V_i }{2} \left( x_p (\N_k F_{ ip
\ga}^{\gb} + \N_p F_{k i \ga}^{\gb} ) \right) + ( [F, (V \hook F)]^{\#})_{k
\ga}^{\gb} \\
& = - \tfrac{V^i }{2} F_{i k \ga}^{\gb} + \tfrac{V_i }{2} x_p \N_k F_{ ip
\ga}^{\gb} - \tfrac{V_i }{2}  x_p\N_p F_{ ik \ga}^{\gb} + ( [F, (V \hook
F)]^{\#})_{k \ga}^{\gb}.
\end{align*}
Therefore we conclude that, applying the identity of $T$ to
\eqref{eq:DD*VhashF},
\begin{align}
\begin{split}\label{eq:DD*VhashF1}
D^*D \left[ V \hook F \right]_{k \ga}^{\gb} &=  (\N_p \N_k V^i) F_{ip \ga}^{\gb}
-(\N_p \N_p V^i) F_{ik\ga}^{\gb} \\
& \hsp - 2(\N_p V^i )(\N_p F_{ik\ga}^{\gb})+ (\N_p V^i)(\N_k F_{ip \ga}^{\gb}) +
(\N_k V^i)( \N_p F_{ip\ga}^{\gb})\\
&\hsp - \tfrac{V^i }{2} F_{i k \ga}^{\gb} + \tfrac{V_i }{2} x_p \N_k F_{ ip
\ga}^{\gb} - \tfrac{V_i }{2}  x_p\N_p F_{ ik \ga}^{\gb} + ( [F, (V \hook
F)]^{\#})_{k \ga}^{\gb}.
\end{split}
\end{align}
Next we compute
\begin{align}
\begin{split}\label{eq:halfxhashDVhashF}
\tfrac{x}{2} \hook D \lbr V \hook F \rbr
& = \frac{x_p}{2} D_p \lbr V^i F_{ik\ga}^{\gb}\rbr \\
& = \frac{x_p}{2} \left( \N_p \lbr V^i F_{ik\ga}^{\gb} \rbr - \N_k \lbr V^i
F_{ip\ga}^{\gb} \rbr \right) \\
& = \frac{x_p}{2} \left( (\N_p V^i ) F_{ik\ga}^{\gb} + V^i  (\N_p
F_{ik\ga}^{\gb} ) - (\N_k V^i) F_{ip\ga}^{\gb} - V^i (\N_k F_{ip\ga}^{\gb})
\right).
\end{split}
\end{align}
Therefore we have that, applying \eqref{eq:DD*VhashF1} and
\eqref{eq:halfxhashDVhashF} to the formula for $L(V \hook F)$, we have
\begin{align*}
L(V \hook F)_{k \ga}^{\gb} &= (D^*D (V \hook F))_{k \ga}^{\gb} + \tfrac{x}{2}
\hook \left( D \lbr V \hook F \rbr \right)_{k \ga}^{\gb} + \left(\left[ V \hook
F, F \right]^{\#} \right)_{k\ga}^{\gb}\\
& = (\N_p \N_k V^i) F_{ip \ga}^{\gb} -(\N_p \N_p V^i) F_{ik\ga}^{\gb} \\
& \hsp - 2(\N_p V^i )(\N_p F_{ik\ga}^{\gb})+ (\N_p V^i)(\N_k F_{ip \ga}^{\gb}) +
(\N_k V^i)( \N_p F_{ip\ga}^{\gb})\\
&\hsp - \tfrac{V^i }{2} F_{i k \ga}^{\gb} + \tfrac{x_p}{2} (\N_p V^i )
F_{ik\ga}^{\gb} -\tfrac{x_p}{2}  (\N_k V^i) F_{ip\ga}^{\gb} \\
& = (\N_p \N_k V^i) F_{ip \ga}^{\gb} -(\N_p \N_p V^i) F_{ik\ga}^{\gb} \\
& \hsp - 2(\N_p V^i )(\N_p F_{ik\ga}^{\gb})+ (\N_p V^i)(\N_k F_{ip \ga}^{\gb}) +
(\N_k V^i)( DF)_{i\ga}^{\gb} \\
&\hsp - \tfrac{V^i }{2} F_{i k \ga}^{\gb} + \tfrac{x_p}{2} (\N_p V^i )
F_{ik\ga}^{\gb} + \tfrac{x_p}{2}  (\N_k V^i) F_{pi\ga}^{\gb} \\
& = (\N_p \N_k V^i) F_{ip \ga}^{\gb} -(\N_p \N_p V^i) F_{ik\ga}^{\gb} \\
& \hsp - (\N_p V^i )(\N_p F_{ik\ga}^{\gb}) + (\N_p V^i )\left((\N_p F_{ki
\ga}^{\gb}) +(\N_k F_{ip \ga}^{\gb}) \right)\\
&\hsp - \tfrac{V^i }{2} F_{i k \ga}^{\gb} + \tfrac{x_p}{2} (\N_p V^i )
F_{ik\ga}^{\gb} \\
& = (\N_p \N_k V^i) F_{ip \ga}^{\gb} -(\N_p \N_p V^i) F_{ik\ga}^{\gb}-
\tfrac{V^i }{2} F_{i k \ga}^{\gb} \\
& \hsp - (\N_p V^i )\left( (\N_p F_{ik\ga}^{\gb})  + (\N_i F_{pk \ga}^{\gb} )  -
\tfrac{x_p}{2} F_{ik\ga}^{\gb}  \right).
\end{align*}
The result follows.
\end{proof}
\end{lemma}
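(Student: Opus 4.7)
The plan is a direct coordinate computation, evaluating each of the three constituents of
\[
L(B) = D^*D B + \tfrac{x}{2} \hook DB + [B,F]^{\#}
\]
on $B = V \hook F$, and combining using the second Bianchi identity together with the soliton equation $D^*F = \tfrac{x}{2} \hook F$.

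First I would expand $D(V \hook F)$ via the Leibniz rule, then apply $D^* = -\nab_p$. The result splits into three families of terms: (i) pure second-derivative-of-$V$ terms, yielding $(\nab_p \nab_k V^i) F_{ip} - (\nab_p \nab_p V^i) F_{ik}$, which are already of the desired form; (ii) mixed terms of shape $(\nab_p V^i)(\nab F)$; and (iii) pure curvature terms $V^i(\nab_p \nab_p F_{ik} - \nab_p \nab_k F_{ip})$. Family (iii) is the critical piece: using the second Bianchi identity to rewrite $\nab_p F_{ik} = -\nab_i F_{kp} - \nab_k F_{pi}$ and then commuting $[\nab_p,\nab_i]$ introduces a curvature-squared commutator, which assembles into $[F,(V \hook F)]^{\#}$, and produces a term $V^i \nab_i(D^*F)$. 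Invoking the soliton equation converts the latter into $V^i \nab_i(\tfrac{x_p}{2} F_{pk})$, producing both an undifferentiated $-\tfrac{V^i}{2} F_{ik}$ contribution and a term $\tfrac{x_p}{2} V^i \nab_i F_{pk}$ which I would again expand by Bianchi to recover the drift structure.

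Second, I would compute $\tfrac{x}{2} \hook D(V \hook F)$ by a direct Leibniz expansion, producing four terms. The two $(\nab V)$-type terms combine with family (ii) from the first step; the two $V^i(\nab F)$-type terms combine with the residual Bianchi-expanded pieces from family (iii) to cancel cleanly. Third, the pound-bracket $[V \hook F, F]^{\#}$ is immediate and precisely cancels the $[F,(V \hook F)]^{\#}$ term generated by the commutator $[\nab_p,\nab_i]F$ in Step 1, up to sign.

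Summing all contributions, the undifferentiated-$V$ terms collapse to $-\tfrac{V^i}{2} F_{ik}$, and the mixed $(\nab_p V^i)(\nab F)$ terms regroup exactly into the claimed combination $-(\nab_p V^i)\bigl(\nab_p F_{ik} + \nab_i F_{pk} - \tfrac{x_p}{2} F_{ik}\bigr)$, while the pure $(\nab\nab V) F$ terms survive unchanged from Step 1. The main obstacle is purely the bookkeeping: orchestrating the order of operations so that (a) Bianchi is applied just enough times to expose the soliton equation, (b) the commutator $[\nab_p,\nab_i]F$ releases exactly the pound-bracket $[F,(V \hook F)]^{\#}$ needed to cancel $[V \hook F, F]^{\#}$, and (c) the drift term from $\tfrac{x}{2} \hook D$ absorbs the stray $x_p$-factors produced by differentiating $D^*F = \tfrac{x}{2} \hook F$. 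No genuinely new idea is needed beyond these three ingredients.
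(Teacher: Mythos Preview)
Your proposal is correct and mirrors the paper's proof essentially step for step: the paper likewise expands $D^*D(V\hook F)$ via Leibniz, isolates the pure-curvature block (their ``$T$''), reduces it with the second Bianchi identity and the commutator $[\nabla_p,\nabla_i]F$ to expose both $[F,(V\hook F)]^{\#}$ and $V^i\nabla_i(D^*F)_k$, invokes the soliton equation on the latter, then adds the directly-computed $\tfrac{x}{2}\hook D(V\hook F)$ and $[V\hook F,F]^{\#}$ so that the bracket terms cancel and the drift absorbs the $x_p$-factors. One minor slip: the soliton equation here is $D^*F + \tfrac{x}{2}\hook F = 0$, so $D^*F = -\tfrac{x}{2}\hook F$, not $+\tfrac{x}{2}\hook F$; with the correct sign your claimed $-\tfrac{V^i}{2}F_{ik}$ contribution comes out right.
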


\begin{lemma}[Eigenforms of $L$]\label{leigenfunc}
For $\nab \in \mathfrak{S}$, and a constant vector field $V$,
\begin{equation}\label{eq:leigenfunc2}
L\left( V \hook F \right) = - \tfrac{1}{2}\left( V \hook  F \right).
\end{equation}
Furthermore
\begin{equation}\label{eq:leigenfunc1} L(D^*F) = - D^*F, \end{equation}

\begin{proof} The identity \eqref{eq:leigenfunc2} is an immediate corollary of
Lemma \ref{lem:VhashFgen}, by simply evaluating \eqref{eq:VhashFgen} on a
constant vector field $V= V^i \del_i$. For the identity \eqref{eq:leigenfunc1},
we compute the following, applying the first Bianchi identity to obtain
\begin{align*}
\left( D^*D(D^* F) \right)_{r \ga}^{\gb} &= - \left( D^*D \left( \tfrac{x}{2}
\hook\ F \right) \right)_{r \ga}^{\gb}\\
&= \nab_i \left( D_i \left( \tfrac{x_p}{2} F_{pr \ga}^{\gb} \right) \right)\\
&= \tfrac{1}{2} \nab_i \left( \nab_i \left( x_p F_{pr \ga}^{\gb} \right) -
\nab_r \left( x_p F_{pi \ga}^{\gb} \right) \right)\\
&= \tfrac{1}{2} \nab_i \left( F_{ir \ga}^{\gb}  + x_p \nab_i F_{pr \ga}^{\gb} -
F_{ri \ga}^{\gb} - x_p \nab_r F_{pi \ga}^{\gb} \right)\\ 
&= \tfrac{1}{2} \nab_i \left( F_{ir \ga}^{\gb} - F_{ri \ga}^{\gb} \right) +
\tfrac{1}{2} \nab_i \left( x_p \nab_i F_{pr \ga}^{\gb} - x_p \nab_r F_{pi
\ga}^{\gb}\right)\\
&=  - (D^*F)_{r \ga}^{\gb}  + \tfrac{1}{2} \nab_i \left( x_p \nab_i F_{pr
\ga}^{\gb} + x_p (\nab_i F_{rp \ga}^{\gb} + \nab_{p}F_{ir \ga}^{\gb}) \right)\\
&=  - (D^*F)_{r \ga}^{\gb}  + \tfrac{1}{2} \nab_i \left( x_i \nab_{p}F_{ir
\ga}^{\gb} \right)\\
& = - (D^*F)_{r \ga}^{\gb} + \tfrac{1}{2}\left( \delta_{ip} \nab_p F_{ir
\ga}^{\gb}+ x_p \nab_i \nab_p F_{ir \ga}^{\gb} \right)\\
&= - (D^*F)_{r \ga}^{\gb} + \tfrac{1}{2}\left( \nab_i F_{ir \ga}^{\gb}+
\tfrac{x_p}{2} \nab_i \nab_p F_{ir \ga}^{\gb} \right)\\
&= -(D^*F)_{r \ga}^{\gb} - \tfrac{1}{2} D^*F_{r \ga}^{\gb} + \tfrac{x_p}{2}
\nab_p \nab_i F_{i r \ga}^{\gb} + \tfrac{x_p}{2} [\nab_i, \nab_p] F_{i r
\ga}^{\gb}\\
&= - \tfrac{3}{2} (D^*F)_{r \ga}^{\gb}+ \tfrac{x_p}{2} D_p \nab_i F_{ir
\ga}^{\gb} + \tfrac{x_p}{2} \nab_r \nab_i F_{ip\ga}^{\gb} + \tfrac{ x_p}{2}
\left( F^{\gb}_{ip \gd} F_{ir \ga}^{\gd} - F^{\gd}_{ip \ga} F_{ir \gd}^{\gb}
\right)\\
&= - \tfrac{3}{2} (D^*F)_{r \ga}^{\gb}  - \left(  \tfrac{x}{2} \hook DD^*F
\right)_{r \ga}^{\gb}  + \tfrac{x_p}{2} \nab_r \nab_i F_{ip\ga}^{\gb} + \left(
[F, D^*F]^{\#} \right)_{r \ga}^{\gb}.
\end{align*}
We simplify the third term, nothing vanishing due to the product of skew and
symmetric matrices
\begin{align*}
\tfrac{x_p}{2} \nab_r \nab_i F_{ip\ga}^{\gb} &= - x_p \nab_r \left(
\tfrac{x_s}{2} F_{s p \ga}^{\gb} \right)\\
&= \left( - \tfrac{x_p }{2} \delta_{rs} F_{s p \ga}^{\gb} - x_s x_p \nab_r F_{sp
\ga}^{\gb} \right)\\
&= - x_p F_{r p \ga}^{\gb} \\
&= \tfrac{1}{2} (D^*F)_{r \ga}^{\gb}.
\end{align*}
Applying this to the above computation we conclude that
\begin{equation*}
(D^* D (D^* F))_{r \ga}^{\gb} = - (D^*F)_{r \ga}^{\gb}  - \left(  \tfrac{x}{2}
\hook DD^*F \right)_{r \ga}^{\gb}  + \left( [F, D^*F]^{\#} \right)_{r
\ga}^{\gb}.
\end{equation*}
Then rearranging the equality we have \eqref{eq:leigenfunc1}, as desired. The
results follow.
\end{proof}
\end{lemma}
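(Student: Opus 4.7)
The first identity \eqref{eq:leigenfunc2} is essentially a corollary of the previously established Lemma \ref{lem:VhashFgen}. My plan is to simply apply that formula to a constant vector field $V = V^i \del_i$: every term in \eqref{eq:VhashFgen} involving $\N_p V^i$ or $\N_p\N_k V^i$ vanishes identically, and the only surviving term is $-\tfrac{V^i}{2}F_{ik\ga}^{\gb}$, which is precisely $-\tfrac{1}{2}(V \hook F)_{k\ga}^{\gb}$. So \eqref{eq:leigenfunc2} reduces to a one-line consequence of the previous lemma.

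For \eqref{eq:leigenfunc1} the plan is to use the soliton equation \eqref{eq:solitondef} to rewrite $D^*F = -\tfrac{x}{2}\hook F$, then compute the three summands of $L(D^*F) = D^*D(D^*F) + \tfrac{x}{2}\hook D(D^*F) + [D^*F, F]^{\#}$ one at a time. The key input for the first term is $D^*D^*F = 0$ (Lemma \ref{D*D*2form}), combined with the first Bianchi identity $\N_i F_{pr} + \N_r F_{ip} + \N_p F_{ri} = 0$ to collapse cyclic permutations of the second derivatives of $F$, and the curvature commutator $[\N_i, \N_p]F_{rs} = [F_{ip}, F_{rs}]$ (on $\bRn$ the Levi-Civita curvature is absent). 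Substituting $D^*F = -\tfrac{x}{2}\hook F$ inside $D^*D(D^*F)$ and differentiating by the product rule produces a term proportional to $F$ itself, a term proportional to $x_p \N_i F_{pr}$ that is reduced via Bianchi to a multiple of $\N_p F_{ir}$ and hence to $D^*F$, and a bracket term $[F, D^*F]^{\#}$ arising from the commutator.

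For the second summand $\tfrac{x}{2}\hook D(D^*F)$, I would again substitute the soliton equation and use the product rule to generate an additional multiple of $D^*F$ (via $\del x = \Id$) along with another cyclic Bianchi term to reabsorb. The third summand $[D^*F, F]^{\#}$ then cancels against the bracket term produced by the first computation, leaving exactly $-D^*F$. A slicker alternative to this whole approach is to invoke Lemma \ref{lem:VhashFgen} directly with the \emph{non-constant} vector field $V^i = -\tfrac{x^i}{2}$: in that case $\N_p \N_k V^i = 0$ and $\N_p V^i = -\tfrac{1}{2}\gd_p^i$ is constant, so the right-hand side of \eqref{eq:VhashFgen} collapses to $\tfrac{x^i}{4} F_{ik\ga}^{\gb} + \tfrac{1}{2}(\N_i F_{ik\ga}^{\gb} + \N_i F_{ik\ga}^{\gb} - \tfrac{x_i}{2}F_{ik\ga}^{\gb})$, the two $\tfrac{x}{4}$ contributions cancel, and the remainder equals $-(V\hook F)_{k\ga}^{\gb}$; this is equivalent to $L(D^*F) = -D^*F$.

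The main obstacle in either route is bookkeeping: the three pieces of $L$ each produce several second-order-derivative, first-order-derivative, and curvature-squared contributions, and only after a careful, coordinated application of Bianchi, the soliton equation, and the curvature commutator do all the second-order derivatives collapse and the bracket terms cancel. The scaling/translation eigenforms picked out in this lemma are exactly the two negative directions flagged in the introduction, so \eqref{eq:leigenfunc1}--\eqref{eq:leigenfunc2} are what make the adjustment by $\dot t$ and $\dot x$ in Definition \ref{stabdefn} natural.
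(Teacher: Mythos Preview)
Your treatment of \eqref{eq:leigenfunc2} and your first proposed route to \eqref{eq:leigenfunc1} match the paper exactly: the paper dispatches the constant-$V$ case in one sentence by quoting Lemma~\ref{lem:VhashFgen}, and for $D^*F$ it carries out the same direct coordinate computation you outline (substitute the soliton equation, expand $D^*D(-\tfrac{x}{2}\hook F)$ via the product rule, apply Bianchi to collapse cyclic terms, use the commutator $[\N_i,\N_p]F$ to extract $[F,D^*F]^{\#}$, and simplify the residual $\tfrac{x_p}{2}\N_r\N_i F_{ip}$ term via one more substitution of the soliton equation).

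Your ``slicker alternative'' is, however, a genuinely different argument that the paper does \emph{not} exploit.  Lemma~\ref{lem:VhashFgen} is stated for an arbitrary vector field, so plugging in $V^i=-\tfrac{x^i}{2}$ is legitimate; with $\N_pV^i=-\tfrac12\gd_p^i$ and $\N_p\N_kV^i=0$ the right-hand side of \eqref{eq:VhashFgen} collapses in three lines to $\N_iF_{ik\ga}^{\gb}=-(D^*F)_{k\ga}^{\gb}$, and since $V\hook F=D^*F$ by the soliton equation this is exactly \eqref{eq:leigenfunc1}.  This route bypasses the roughly fifteen-line coordinate chase the paper performs, and avoids altogether the separate handling of the commutator term and the auxiliary simplification of $\tfrac{x_p}{2}\N_r\N_i F_{ip}$.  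The trade-off is that the paper's direct computation is self-contained once you have the definition of $L$, whereas your shortcut leans entirely on the prior Lemma~\ref{lem:VhashFgen}; but since that lemma is already in the paper and its proof already absorbed the Bianchi/commutator bookkeeping for general $V$, reusing it here is strictly more economical.
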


\begin{defn} Given $\N \in \mathfrak{S}$ and $\gl \in \mathbb R$, let
\begin{equation*}
\chi_{\la} := \left\{ B \in \Lambda^1(M) : L B = \la B \right\}.
\end{equation*}
\end{defn}

\begin{thm} \label{stabthm} Let $\nab \in \mathfrak{S}$ have polynomial energy
growth.  Then $\nab$ is $\mathcal{F}$-stable if and only if one has the
conditions:

\begin{enumerate}
\item $\chi_{-1} = \{ \rho D^*F  : \rho \in \mathbb{R} \}$,
\item $\chi_{-1/2} = \{ V \hook F : V \in \mathbb{R}^n \}$,
\item $\chi_{\la} = \{ 0 \}$ for any $\la < 0$ and $\la \notin \{ -1,
\frac{-1}{2} \}$.
\end{enumerate}

\begin{proof} Fix $B \in \Lambda^1(E)$ and decompose it as
\begin{equation}
B :=  \varsigma D^*F + (\varrho \hook F) + \varpi,
\end{equation}
where $\varsigma \in \mathbb{R}$, $\varrho \in \mathbb{R}^n$, and $\varpi \in
\Lambda^1(E)$, and, for all $V \in \mathbb{R}^n$,
\begin{equation}
\int_{\mathbb{R}^n}{ \langle \varpi, D^*F \rangle G dV} =
\int_{\mathbb{R}^n}{\langle \varpi, (V \hook F) \rangle G dV} = 0.
\end{equation}
Using Corollary \ref{cor:2ndvarshrink} and identities from Corollary
\ref{5partlemma} we have
\begin{align}
\begin{split}
\left. \frac{d^2}{d s^2} \left[ \mathcal{F}_{0,1}(\nab_s) \right] \right|_{s=0}
& = - 4  \dot{t}^2 \int_{\bRn}{|D^*F|^2 G dV} - 2 \int_{\bRn}{|F \hook
\dot{x}|^2 G dV} + 4 \int_{\bRn}{ \left\langle B, L(B)\right\rangle G dV}\\
& \hsp - 4 \int_{\bRn}{\left\langle B, - 2 \dot{t} D^*F +  \dot{x} \hook F 
\right\rangle G dV}\\
& = - 4  \dot{t}^2 \int_{\bRn}{|D^*F|^2 G dV} - 2 \int_{\bRn}{|F \hook
\dot{x}|^2 G dV} \\
& \hsp + 4 \int_{\bRn}{ \left\langle \varsigma D^*F + (\varrho \hook F) +
\varpi, -\varsigma D^*F - \tfrac{1}{2}(\varrho \hook F) + L(\varpi)
\right\rangle G dV}\\
& \hsp - 4 \int_{\bRn}{\left\langle \varsigma D^*F + (\varrho \hook F) + \varpi,
- 2 \dot{t} D^*F +  \dot{x} \hook F  \right\rangle G dV} \\
& = - 4  \dot{t}^2 \int_{\bRn}{|D^*F|^2 G dV} - 2 \int_{\bRn}{|F \hook
\dot{x}|^2 G dV} \\
& \hsp - 4 \varsigma^2 \int_{\bRn}{ |D^*F|^2 G dV} - 2 \int_{\bRn}{|\varrho
\hook F|^2 G dV} + 4 \int_{\bRn}{\langle \varpi, L(\varpi)\rangle dV}\\
& \hsp + 8 \varsigma \dot{t} \int_{\bRn}{|D^*F|^2 G dV} - 4 \int_{\bRn}{\langle
\varrho \hook F , \dot{x} \hook F \rangle G dV}\\
& = - 4  (\dot{t} - \varsigma)^2  \int_{\bRn}{|D^*F|^2 G dV} - 2 \int_{\bRn}{|F
\hook (\varrho + \dot{x})|^2 G dV} + 4 \int_{\bRn}{\langle \varpi,
L(\varpi)\rangle dV}.\end{split}
\end{align}
Choosing $\varsigma = \dot{t}$ and $\varrho = - \dot{x}$, we have that 
\begin{equation*}
\left. \frac{d^2}{d s^2} \left[ \mathcal{F}_{0,1}(\nab_s) \right] \right|_{s=0}
= 4 \int_{\bRn}{\langle \varpi, L(\varpi)\rangle dV}.
\end{equation*}
Both directions of the theorem follow from this calculation.
\end{proof}
\end{thm}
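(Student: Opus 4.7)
The plan is to reduce the stability question entirely to a statement about the spectrum of $L$ on the weighted-$L^2$ complement of the two canonical negative eigenforms identified in Lemma \ref{leigenfunc}. The key inputs are Corollary \ref{cor:2ndvarshrink} for the second variation at a $(0,1)$-soliton, the eigenform identities $L(D^*F) = -D^*F$ and $L(V\hook F) = -\tfrac{1}{2}(V\hook F)$, and the orthogonality $D^*F\perp V\hook F$ from Corollary \ref{5partlemma}(d). The idea is that the negative contributions of these ``tautological'' eigenforms can always be killed by a clever choice of basepoint variation $(\dot t,\dot x)$, so only the remainder $\varpi$ matters for stability.

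First I specialize Corollary \ref{cor:2ndvarshrink} to $(x_0,t_0) = (0,1)$ and use the soliton equation $\tfrac{x}{2}\hook F = -D^*F$ to rewrite the linear term $-4\int\langle B,(\dot t\, x + \dot x)\hook F\rangle G_0\,dV$ as $-4\int\langle B,-2\dot t\, D^*F + \dot x\hook F\rangle G_0\,dV$. Then for $B\in W^{2,2}_\N$ I decompose
\[ B = \varsigma D^*F + \varrho\hook F + \varpi, \qquad \varsigma\in\mathbb R,\ \varrho\in\mathbb R^n, \]
where $\varpi$ is weighted-$L^2$-orthogonal to $D^*F$ and to every $V\hook F$; this is well-posed because $D^*F\perp V\hook F$ in the weighted inner product by Corollary \ref{5partlemma}(d).

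Applying Lemma \ref{leigenfunc} together with self-adjointness of $L$ in the weighted inner product, all cross terms die, giving
\[ \int \langle B, LB\rangle G_0\,dV = -\varsigma^2\int|D^*F|^2 G_0\,dV - \tfrac{1}{2}\int|\varrho\hook F|^2 G_0\,dV + \int\langle\varpi, L\varpi\rangle G_0\,dV, \]
while by the same orthogonality $\int\langle B,D^*F\rangle G_0\,dV = \varsigma\int|D^*F|^2 G_0\,dV$ and $\int\langle B,\dot x\hook F\rangle G_0\,dV = \int\langle\varrho\hook F, \dot x\hook F\rangle G_0\,dV$. Substituting into the second variation and completing the square in $\dot t$ and $\dot x$ yields
\[ \FF''_{0,1}(\dot t,\dot x, B) = -4(\dot t-\varsigma)^2\int|D^*F|^2 G_0\,dV - 2\int|F\hook(\varrho+\dot x)|^2 G_0\,dV + 4\int\langle\varpi, L\varpi\rangle G_0\,dV. \]
Choosing $\dot t=\varsigma$ and $\dot x = -\varrho$ kills the first two terms, so $\FF$-stability reduces to showing $\int\langle\varpi, L\varpi\rangle G_0\,dV\geq 0$ on the weighted-$L^2$ complement of $\mathrm{span}(D^*F)\oplus\{V\hook F:V\in\mathbb R^n\}$. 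By self-adjointness of $L$ and spectral decomposition, this nonnegativity is equivalent to the absence of $L$-eigenforms with negative eigenvalue outside that span, which is precisely (1)--(3). For the converse (forward) direction, any offending eigenform---in $\chi_\lambda$ with $\lambda < 0$, $\lambda \notin\{-1,-1/2\}$, or in $\chi_{-1}\setminus\mathrm{span}(D^*F)$, or in $\chi_{-1/2}\setminus\{V\hook F\}$---is automatically orthogonal to the excluded pieces by self-adjointness applied to distinct eigenvalues (and a direct check in the equal-eigenvalue case), so it plays the role of $\varpi$ and produces strict negativity regardless of $(\dot t,\dot x)$.

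The main obstacle is analytic rather than algebraic: one has to verify that $L$ is genuinely self-adjoint in the weighted $L^2(G_0\, dV)$ inner product on $W^{2,2}_\N$, that the relevant integration by parts is valid (which relies on the polynomial energy / curvature growth hypothesis encoded in $W^{2,2}_\N$), and that the finite-dimensional projections onto $\mathrm{span}(D^*F)$ and $\{V\hook F:V\in\mathbb R^n\}$ actually define a well-posed decomposition. Granting these points, the remainder of the argument is just the completion-of-squares bookkeeping above.
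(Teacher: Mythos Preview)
Your proof is correct and follows essentially the same route as the paper: decompose $B$ into $\varsigma D^*F + \varrho\hook F + \varpi$, apply the eigenform identities of Lemma \ref{leigenfunc} and the orthogonality from Corollary \ref{5partlemma}(d), complete the square in $(\dot t,\dot x)$, and reduce to the sign of $\int\langle\varpi,L\varpi\rangle G_0\,dV$. If anything, you are more explicit than the paper about the converse direction and about the analytic prerequisites (weighted self-adjointness of $L$, well-posedness of the decomposition), which the paper leaves implicit in the phrase ``Both directions of the theorem follow from this calculation.''
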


\subsection{Gap theorem}

In this subsection we establish Theorem \ref{gapthm}.  To begin we prove a lemma
showing that self-shrinking Yang-Mills connections are flat.

\begin{lemma}\label{selfsimYM} Suppose $\N$ is a soliton with bounded curvature
satisfying $D_{\N}^* F_{\N} = 0$.  Then $F_{\N} = 0$.
\end{lemma}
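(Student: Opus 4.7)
The plan is to extend $\N$ to a self-similar Yang-Mills flow, use the hypothesis $D^*_\N F_\N = 0$ to show the flow is stationary in time, combine stationarity with self-similar scaling to deduce an exact pointwise homogeneity for $|F_\N|^2$, and then invoke bounded curvature to force $F_\N \equiv 0$.

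First I would apply Proposition~\ref{prop:sssiffgtfm} (together with Lemmas~\ref{equivDE} and~\ref{wnkovescallawsoliton}) to extend the soliton $\N$ to a self-similar Yang-Mills flow $\N_t$ on $\mathbb{R}^n\times(-\infty,0)$ with $\N_{-1}=\N$, whose coefficient matrices in an exponential gauge satisfy $\gG(x,t)=\tfrac{1}{\sqrt{-t}}\gG(x/\sqrt{-t},-1)$. Direct differentiation of this scaling yields $F_{\N_t}(x)=\tfrac{1}{-t}F_\N(x/\sqrt{-t})$ and, by one further connection derivative, $(D^*F_{\N_t})(x)=\tfrac{1}{(-t)^{3/2}}(D^*F_\N)(x/\sqrt{-t})$. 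The hypothesis $D^*F_\N=0$ therefore propagates to $D^*F_{\N_t}=0$ for every $t<0$, so the Yang-Mills flow equation $\partial_t\N_t=-D^*F_{\N_t}$ collapses to $\partial_t\N_t=0$. As a consequence, the gauge-invariant scalar $|F_{\N_t}|^2$ is independent of $t$.

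Next I would compare this time-invariance against the scaling formula $|F_{\N_t}|^2(x)=\tfrac{1}{t^2}|F_\N|^2(x/\sqrt{-t})$ that is inherited from the $\gG$-scaling. Setting $\lambda := 1/\sqrt{-t}>0$ and $y := x/\sqrt{-t}$, these two descriptions combine to give the pointwise homogeneity
\begin{equation*}
|F_\N|^2(\lambda y) \;=\; \lambda^{-4}\,|F_\N|^2(y), \qquad \forall\,\lambda>0,\ y\in \mathbb{R}^n.
\end{equation*}

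Finally, the bounded curvature hypothesis yields $|F_\N|^2(\lambda y)\leq C$ uniformly in $\lambda$ and $y$, so $|F_\N|^2(y)\leq C\lambda^4$ for every $\lambda>0$; sending $\lambda\to 0$ forces $|F_\N|^2(y)=0$, and hence $F_\N\equiv 0$. The main subtlety is the gauge-theoretic one: the $\gG$-scaling lives in a specific exponential gauge while $\partial_t\N_t=0$ is the gauge-covariant Yang-Mills flow statement, but the two reconcile cleanly at the level of the gauge-invariant scalar $|F|^2$, which inherits both the scaling and the time-independence.
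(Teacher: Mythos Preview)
Your proof is correct and follows essentially the same route as the paper: extend the soliton to the self-similar Yang-Mills flow $\N_t$, show the flow is stationary, and play stationarity off against the scaling law together with bounded curvature. The one substantive difference is how stationarity is obtained. The paper invokes uniqueness of Yang-Mills flow on $\mathbb{R}^n$ with bounded curvature to identify the self-similar extension with the constant-in-time solution $\tilde\N_t\equiv\N$, whereas you observe directly that the scaling $\gG_t(x)=(-t)^{-1/2}\gG(x/\sqrt{-t})$ forces $(D^*F_{\N_t})(x)=(-t)^{-3/2}(D^*F_\N)(x/\sqrt{-t})=0$, so $\partial_t\gG_t=-D^*F_{\N_t}=0$ without any appeal to uniqueness. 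This is a genuine simplification. Your closing remark about a gauge-theoretic subtlety is overcautious: both the scaling form of $\gG_t$ and the equation $\partial_t\gG_t=-D^*F_{\N_t}$ already live in the same exponential gauge, so $\partial_t\gG_t=0$ and the scaling can be compared directly at the level of $\gG$ (and hence $F$), not merely at the level of $|F|^2$.
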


\begin{proof} 
Suppose to the contrary that $\N$ is a nonflat soliton and Yang-Mills
connection. As a soliton, by Proposition \ref{prop:sssiffgtfm}, there exists a
gauge such that for all $\la \in \mathbb{R}$, the connection coefficient matrix
satisfies $\gG(x,t) = \la \gG(\la x,\la^2 t)$.
Thus the curvature scales as
\begin{equation*}
F_{\N}(x,t) = \la^2 F_{\N}(\la x,\la^2 t),
\end{equation*}
with the given connection $\N$ as the time $-1$ slice.  Using this we note that
because $\N$ is nontrivial there exists some $y \in \bRn$ at which the following
limit holds:
\begin{equation}\label{eq:limofF}
\lim_{t \to 0} |F_{\N}(y \sqrt{-t},t)| = \lim_{t \to 0} \tfrac{1}{t} \left|
F_{\N}\left( y,-1\right) \right|= \infty.
\end{equation}
In particular we have $\sup_{x \in \mathbb{R}^n, {t \in [-1,0)}} | F_{\N}(x,t)|
= \infty$. Simultaneously, since $D^*_{\N} F_{\N} = 0$ and solutions to the
Yang-Mills flow on $\mathbb R^n$ with bounded curvature are unique, we obtain
that $\dt \gG = 0$ for all $(x,t) \in \mathbb R^n \times [-1,0)$.  Therefore we
have that for all $x \in \bRn$, then $|F_{\N}(x,t) | = |F_{\N}(x,-1)|$. This
implies $\sup_{x \in \mathbb{R}^n,{t \in [-1,0)}} | F_{\N}(x,t) | = \sup_{x \in
\mathbb{R}^n} | F_{\N}(x,-1) |  < \infty$, a contradiction. The result follows.
\end{proof}

\begin{proof} [Proof of Theorem \ref{gapthm}] Since $|F|$ is bounded, it follows
from local smoothing estimates for Yang-Mills flow (\cite{Weinkove} Theorem 2.2)
that $| D^* F|$ and $| \nab D^*F|$ are also uniformly bounded.  Integration by
parts yields that
\begin{align*} - \int_{\bRn}{ |\nab D^*F|^2 G dV} 
&= \int_{\bRn}{ (\nab_i (D^* F)_{ \ell \gd}^{\gb})( \nab_i (D^* F)_{\ell
\gb}^{\gd}) G dV} \\
&= -\int_{\bRn}{ (D^* F )_{\ell \gd}^{\gb} (\nab_i \nab_i (D^* F)_{\ell
\gb}^{\gd} G dV} - \int_{\bRn}{ (D^* F)_{\ell \gd}^{\gb}) (\nab_i (D^* F)_{\ell
\gb}^{\gd}) \nab_i G dV} \\
&= -\int_{\bRn}{(D^* F)_{\ell \gd}^{\gb} ( \lap (D^* F)_{\ell \gb}^{\gd}) G dV}
+ \int_{\bRn}{ (D^* F)_{\ell \gd}^{\gb} \nab_i (D^* F)_{\ell \gb}^{\gd} 
\frac{x_i}{2} G dV}.
\end{align*}
Thus from the Bochner formula we have
\begin{align*} \int_{\bRn}{ |\nab D^*F|^2 G dV}  &= \int_{\bRn}{\left\langle
D^*F ,  -\lap D^* F + \tfrac{x}{2} \hook \nab D^*F \right\rangle G dV}\\
&= \int_{\bRn}{\left\langle D^*F ,   \lap_DD^*F + [F,D^*F]^{\#} + \tfrac{x}{2}
\hook \nab D^*F \right\rangle G dV}.
\end{align*}
Note by Lemma \ref{D*D*2form} that $D^*D^*F = 0$.  Applying the definition of
$L$ to the above expression we obtain that, since $D^*F$ is an eigenfunction of
$L$ by Lemma \ref{leigenfunc},
\begin{align*} \int_{\bRn}{ |\nab D^*F|^2 G dV}  &= \int_{\bRn}{\left\langle
D^*F ,  L(D^*F) + 2[F,D^*F]^{\#} - \left( \tfrac{x}{2} \hook D D^* F \right) +
\tfrac{x}{2} \hook \nab D^*F \right\rangle G dV}\\
&= \int_{\bRn}{\left\langle D^*F ,  - D^*F + 2[F,D^*F]^{\#} -\left( \tfrac{x}{2}
\hook D D^* F \right) + \tfrac{x}{2} \hook \nab D^*F \right\rangle G dV}.
\end{align*}

In particular, we focus on simplifying the expression $\left(x \hook \nab D^*F-
x \hook D D^*F \right)$. This can be simplified by introducing a divergence term
and applying the first Bianchi identity, and Lemma \ref{D*D*2form} once more,
\begin{align*} \left(x \hook \nab D^*F- x \hook D D^*F \right)_{m \ga}^{\gb} &=
x_i \nab_i (D^*F)_{m \ga}^{\gb} - x_i D_i (D^*F)_{m \ga}^{\gb} \\
&=  x_i \nab_m (D^*F)_{i \ga}^{\gb} \\
&=  -x_i \nab_m \nab^k F_{ki \ga}^{\gb} \\
&= -  \nab_m \left( x_i \nab^k F_{ki \ga}^{\gb} \right) +  \nab^k F_{km
\ga}^{\gb} \\
&= - \nab_m \left[ \N^k (x_i F_{k i \ga}^{\gb}) - F_{kk \ga}^{\gb} \right] -
(D^*F)_{m \ga}^{\gb}\\
&=  - \nab_m \nab^k (D^*F)_{k \ga}^{\gb} - (D^*F)_{m \ga}^{\gb}\\
&=- (D^*F)_{m \ga}^{\gb}.
 \end{align*}
Inserting this above and applying the Cauchy Schwartz inequality yields
\begin{align*}
\int_{\bRn}{ |\nab D^*F|^2 G dV} &=-\tfrac{3}{2}\int_{\bRn}{ |D^*F|^2 G dV} +
2\int_{\bRn}{\langle D^*F, [D^*F, F]^{\#} \rangle G dV} \\
& \leq \left( 4 |F| -\tfrac{3}{2} \right) \int_{\bRn}{ |D^*F|^2 G dV}.
\end{align*}
Therefore for $\nab$ with $|F_{\nab}| \leq  \tfrac{3}{8}$, we have that $\left\|
(\nab D^*F) \sqrt{G} \right\|_{L^2} = 0$, which implies that 
$D^*F$ is parallel. Since $\nab$ is a soliton we have $D^*F = \tfrac{x}{2} \hook
F$, then at $x=0$, $D^*F$ vanishes and thus since $D^*F$ is parallel, $D^*F = 0$
for all $x$. By Lemma \ref{selfsimYM} it follows that $\nab$ is flat, as
desired.
\end{proof}

\section{Entropy Stability}\label{ss:Ffunctnentropy}

In this section we combine the results of the previous sections to establish
that for a soliton with polynomial energy growth the entropy is achieved at
$(0,1)$, and moreover it is uniquely achieved at this point unless the
connection has flat directions.  The strategy is very similar to (\cite{CM}
Lemma 7.10).  This culminates in the proof of Theorem \ref{entequiv}.

\begin{defn} We say that a connection $\N$ is \emph{cylindrical} if there is a
constant vector field $V$ such that
\begin{align*}
V \hook F_{\N} \equiv 0.
\end{align*}
\end{defn}

\begin{defn} \label{Xidef} Given a one-parameter family of connections $\N_s$,
$s \in I$, let
\begin{equation*}
\Xi : \mathbb{R}^{n} \times \mathbb{R}_{\geq 0} \times I : (x,t,s) \mapsto
\mathcal{F}_{x,t}(\N_s).
\end{equation*}
Where $\Xi(x,t) := \Xi(x,t,0)$.
\end{defn}

\begin{prop}\label{CMLemma}  Suppose that $\nab \in \mathfrak{S}$ is a
connection with polynomial energy growth which is not cylindrical. Given
$\epsilon > 0$ there exists $\delta > 0$ such that
\begin{equation}\label{def:Xi}
\sup \left\{ \mathcal{F}_{x_0,t_0}(\nab) : |x_0| + | \log t_0 | > \epsilon
\right\} < \la(\N) - \delta.
\end{equation}

\begin{proof} 
We show that if $\N$ is not cylindrical then $\Xi$ has a strict (global) maximum
at $(0,1)$.  We do this by showing that $(0,1)$ is the unique critical point and
then showing the second derivative at $(0,1)$ is strictly negative.  First we
show that $\Xi$ has a strict local maximum at $(0,1)$, then we show that $\Xi$
decreases along a family of paths through the space-time domain emanating from
$(0,1)$ whose union is the entire domain.

For the first step, since $\N$ is a soliton then by Proposition
\ref{zYMfirstvar} the gradient of $\Xi$ vanishes at the point $(0,1)$, which is
therefore a critical point. The second variation formula for
$\mathcal{F}_{x_0,t_0}$ computed in Proposition \ref{zhang2ndvar} applied to a
fixed $\N$ and evaluated along a path $(sy,1+sh)$ for $s>0$ and $h \in
\mathbb{R}$ yields that
\begin{equation}\label{eq:CMlem2ndvar}
\frac{\del^2}{\del s^2} \lbr \Xi (sy, 1+sh) \rbr = - 2 (1+sh) \left( 2h^2
\int_{\bRn}{|D^*F|^2 G_s dV} +  \int_{\bRn}{|y \hook F|^2 G_s dV} \right).
\end{equation}
Note that \eqref{eq:CMlem2ndvar} is nonpositive provided $(1+sh) \geq 0$. The
first term vanishes only if $h = 0$ or when $\N$ is a Yang-Mills connection, and
therefore flat by Proposition \ref{prop:YMsoliton4flat}, but we assume $\N$ is
nonflat.  Meanwhile, the second term vanishes only when $y \hook F =0$, which is
not allowed since we assume $\N$ is not cylindrical.  We thus conclude that
$\Xi$ has a strict local maximum at $(0,1)$.

We next show that for a given $y \in \mathbb{R}^{n}$ and $a \in \mathbb{R}$,
one has $\tfrac{\del}{\del s} \lbr \Xi(sy,1+ as^2) \rbr \leq 0$ for all $s > 0$
with $1 + as^2 > 0$.  We begin with Corollary \ref{5ptlemSolId}, replacing $x_0
\mapsto x_s$ and $t_0 \mapsto t_s$ and $G_0 \mapsto G_s$. We differentiate $\Xi$
using the variation formula \eqref{eq:zYMfirstvar} for $\mathcal{F}_{x_0,t_0}$
with $\dot{\gG} = 0$ to obtain
\begin{align}
\begin{split}\label{eq:Xi1}
\frac{\del}{\del s} \lbr \Xi (x_s,t_s) \rbr &=\dot{t}_s \int_{\bRn}{\left( t_s
\left(\frac{4- n}{2}\right) + \frac{| x - x_s |^2}{4}  \right) |F_{\N}|^2 G_s
dV}  +  t_s \int_{\bRn}{ \frac{\langle y , x - x_s \rangle}{2} |F_{\N}|^2 G_s
dV}.
\end{split}
\end{align}
We insert the two identities (a) and (b) of Corollary \ref{5ptlemSolId} into
\eqref{eq:Xi1}:
\begin{align*}
\frac{\del}{\del s} \lbr \Xi (x_s,t_s) \rbr
&= \dot{t}_s \int_{\bRn}{\left(t_s \left( \frac{4 -n}{2} \right) + \frac{\left|
x - x_s \right|^2}{4} + \frac{\langle y,x-x_s \rangle}{2} \right) G_s \left| F
\right|^2 dV} \\
& = \dot{t}_s \int_{\bRn} F_{pu \ga}^{\gb} F_{iu \gb}^{\ga} \left( x\left(t_s -
1 \right) + x_s \right)^p (x-x_s)^i G_s dV \\
& \hsp +  2  t_s \int_{\bRn}{F_{pu \ga}^{\gb} F_{iu \gb}^{\ga} \left(x \left(
t_s - 1 \right) + x_s \right)^p y^i G_s dV}\\
&=  \int_{\bRn}{ F_{pu \ga}^{\gb} F_{iu \gb}^{\ga} \left( x\left(t_s - 1 \right)
+ x_s \right)^p \left(  \dot{t}_s (x-x_s)^i + 2t_s y^i \right) G_s dV}.
\end{align*}
We then evaluate $\Xi$ at $x_s = sy$ and $t_s = 1 + a s^2$ to obtain
\begin{align*}
\frac{\del}{\del s} \lbr \Xi (sy,1+as^2) \rbr& = \int_{\bRn}{ F_{pu \ga}^{\gb}
F_{iu \gb}^{\ga} \left( as^2 x + sy \right)^p \left(  2as (x-sy)^i + 2(1+as^2)
y^i \right) G_s dV} \\
&=-2s \int_{\bRn} |(asx + y) \hook F|^2 G_s dV.
\end{align*}
Thus the derivative of $\Xi$ is nonpositive over the union of all paths
parametrized by $(sy,1+ys^2)$. Since these paths union to the entire space-time
domain, we conclude the result.
\end{proof}
\end{prop}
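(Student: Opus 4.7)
The overall strategy is to show that the spacetime map $\Xi(x_0,t_0) := \FF_{x_0,t_0}(\N)$ attains a unique strict global maximum at $(0,1)$ with value $\la(\N)$, and then to upgrade this pointwise statement to a uniform gap by continuity together with a decay estimate at infinity. The fact that $(0,1)$ is a critical point of $\Xi$ is immediate from Corollary \ref{Fcrit}, since $\N \in \mathfrak{S}$ is by definition a $(0,1)$-soliton, and consequently $\la(\N) = \Xi(0,1)$.

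For the strict local maximum at $(0,1)$, I would specialize the second variation formula of Corollary \ref{cor:2ndvarshrink} to variations of the basepoint alone, i.e.\ $\dot\gG \equiv 0$ with $(\dot t, \dot x) = (h, y)$, which reduces to
\[
\FF''_{0,1}(h, y, 0) = -4 h^2 \int_{\bRn} |D^*F|^2 G_0\, dV - 2 \int_{\bRn} |y \hook F|^2 G_0\, dV.
\]
This is manifestly nonpositive, and nondegeneracy of the two terms follows from the structural hypotheses on $\N$. Vanishing of the first integral forces $D^*F \equiv 0$, which combined with the soliton equation \eqref{eq:solitondef} and Lemma \ref{selfsimYM} contradicts the nontriviality of $F$. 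Vanishing of the second forces $y \hook F \equiv 0$, which contradicts the non-cylindrical hypothesis unless $y = 0$. Thus the Hessian at $(0,1)$ is strictly negative definite.

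To globalize, I would test $\Xi$ along the two-parameter family of paths $(sy, 1+as^2)$ emanating from $(0,1)$, parameterized by $(y,a) \in \bRn \times \mathbb{R}$. Specializing Proposition \ref{zYMfirstvar} at time $s$ with $\dot\gG = 0$, $\dot t_s = 2as$, $\dot x_s = y$, and applying the two identities of Corollary \ref{5ptlemSolId} to assemble the mixed terms into a single perfect square, one arrives at
\[
\frac{\del}{\del s}\Xi(sy, 1+as^2) = -2s \int_{\bRn} |(asx + y)\hook F|^2 G_s\, dV \le 0,
\]
with equality only when $(asx + y)\hook F$ vanishes identically on $\bRn$. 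These paths fill the entire domain $\bRn \times \mathbb{R}_{>0}$ as $(y,a)$ vary, and the non-cylindrical hypothesis rules out the degenerate case for parameter values other than those returning to $(0,1)$, so $\Xi(x_0,t_0) < \la(\N)$ for every $(x_0,t_0) \neq (0,1)$.

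The main obstacle is converting this pointwise strict inequality into the uniform gap in the statement, because the region $\{|x_0| + |\log t_0| > \epsilon\}$ is noncompact. To close this, I would establish that $\Xi(x_0,t_0) \to 0$ as $(x_0,t_0)$ tends to the boundary of $\bRn \times \mathbb{R}_{>0}$: the regime $|x_0| \to \infty$ is controlled by exponential decay of $G_0$ against the polynomial energy bound, the regime $t_0 \to 0^+$ by the prefactor $t_0^2(4\pi t_0)^{-n/2}$ after localizing $|F|^2$ to small balls around $x_0$, and the regime $t_0 \to \infty$ by comparing diffusive spreading of $G_0$ against polynomial energy growth. Once $\Xi$ is continuous on $\bRn \times (0,\infty)$ and vanishes at infinity, its supremum over $\{|x_0| + |\log t_0| > \epsilon\}$ is attained at some interior point $(x_*,t_*) \neq (0,1)$, where the strict pointwise inequality furnishes the required $\delta > 0$.
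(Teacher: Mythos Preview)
Your proposal matches the paper's proof in its essential content: the critical-point argument via the first variation, the strict local maximum via the second variation at $(0,1)$, and the key monotonicity computation
\[
\frac{\del}{\del s}\Xi(sy, 1+as^2) = -2s \int_{\bRn} |(asx + y)\hook F|^2 G_s\, dV
\]
along the curves that foliate $\bRn\times(0,\infty)$. That is exactly what the paper does.

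Where you diverge is the closing step. You treat the passage from pointwise strict inequality to the uniform gap as a separate obstacle requiring a decay-at-infinity estimate on $\Xi$. This is unnecessary, and the path monotonicity you already have does the job directly. The strict local maximum gives you a small compact sphere $K_r = \{|x_0|+|\log t_0| = r\}$ with $r\le\epsilon$ on which $\max_{K_r}\Xi = \la(\N)-\delta_0$ for some $\delta_0>0$. Every curve $s\mapsto(sy,1+as^2)$ from $(0,1)$ to a point with $|x_0|+|\log t_0|>\epsilon$ must cross $K_r$, and since $\Xi$ is nonincreasing along the curve, the value at the endpoint is at most $\la(\N)-\delta_0$. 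That is the uniform $\delta$; no behavior at infinity is needed. This is what the paper's terse final sentence is invoking.

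Your proposed decay argument, besides being superfluous, has a soft spot: the prefactor $t_0^2(4\pi t_0)^{-n/2}=c_n\,t_0^{2-n/2}$ diverges as $t_0\to 0^+$ for $n\ge 5$, so it is not the prefactor that forces $\Xi\to 0$ in that regime; one needs instead that $\int|F|^2 G_0\,dV$ stays bounded, which is what Lemma~\ref{Fdecaylemma} provides, but under polynomial \emph{curvature} growth rather than the polynomial \emph{energy} growth assumed here. The $t_0\to\infty$ limit is likewise not obviously controlled by polynomial energy growth alone. None of this matters once you extract the uniform gap from the path monotonicity as above.
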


\begin{lemma} \label{Fdecaylemma} Let $f \in C^{\infty}(\bRn)$ be some function
pointwise bounded above by a polynomial $p$. Then for all $x_0 \in \bRn$,
\begin{equation*}
\lim_{t_0 \to 0} \int_{\mathbb{R}^n}{f G_0 dV} = f(x_0).
\end{equation*}
In particular we have that for a connection $\N$ on $\bRn$ with polynomial
curvature growth we have that for all $x_0 \in \bRn$,
\begin{equation*}
\lim_{t_0 \to 0} \mathcal{F}_{x_0,t_0}(\N) = 0.
\end{equation*}

\begin{proof} The first line is a well-known property of the heat kernel.  Since
$\brs{F_{\N}}^2$ has polynomial growth, we have
\begin{align*}
\lim_{t_0 \to 0} \FF_{x_0,t_0}(\N) =&\ \lim_{t_0 \to 0} t_0^2 \int_{\mathbb R^n}
\brs{F_{\N}}^2 G_0 = \left( \lim_{t_0 \to 0} t_0^2 \right) \left( \lim_{t_0 \to
0} \int_{\mathbb R^n} \brs{F_{\N}}^2 G_0 \right) = 0
\end{align*}
\end{proof}
\end{lemma}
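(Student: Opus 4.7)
The plan is to establish the first identity via a rescaling and dominated convergence, then derive the second as an immediate corollary. For the first, I would substitute $y = (x - x_0)/(2\sqrt{t_0})$ in the integral, under which $dV = (2\sqrt{t_0})^n\, dy$, to rewrite
\[
\int_{\bRn} f(x) G_0(x)\, dV = \pi^{-n/2} \int_{\bRn} f(x_0 + 2\sqrt{t_0}\, y)\, e^{-|y|^2}\, dy,
\]
so that the Gaussian weight becomes independent of $t_0$. As $t_0 \to 0^{+}$ the integrand converges pointwise to $f(x_0)\, e^{-|y|^2}$ by continuity of $f$, and the hypothesis $|f(z)| \leq p(|z|)$ supplies, for $t_0$ in a bounded range, the integrable majorant $p(|x_0| + 2|y|)\, e^{-|y|^2}$. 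The Dominated Convergence Theorem then delivers the limit $f(x_0) \cdot \pi^{-n/2} \int_{\bRn} e^{-|y|^2}\, dy = f(x_0)$.

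For the second part, I would apply the first assertion with $f := \brs{F_{\N}}^2$, whose polynomial growth is precisely the hypothesis on $\N$. This produces
\[
\lim_{t_0 \to 0} \int_{\bRn} \brs{F_{\N}}^2\, G_0\, dV = \brs{F_{\N}(x_0)}^2 < \infty.
\]
Since $\FF_{x_0,t_0}(\N) = t_0^2 \int_{\bRn} \brs{F_{\N}}^2\, G_0\, dV$, the result follows by factoring the limit of the product against the vanishing prefactor $t_0^2$.

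There is no substantive obstacle: the only point requiring care is the integrable majorant needed for dominated convergence, and this is automatic from the polynomial growth of $f$ combined with Gaussian decay.
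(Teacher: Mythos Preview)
Your proof is correct and follows essentially the same approach as the paper: the paper simply cites the first identity as a well-known heat kernel property (which you make explicit via rescaling and dominated convergence), and for the second part both you and the paper apply the first identity to $\brs{F_{\N}}^2$ and factor out the vanishing $t_0^2$.
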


\begin{proof}[Proof of Theorem \ref{entequiv}]
If $\N$ is not $\mathcal{F}$-stable then there is a variation $\N_s$ for $s \in
[- 2 \epsilon, 2 \epsilon]$ where $\N_0 = \N$ which satisfies the following
properties:
\begin{enumerate}
\item[(V1)]\label{V1} For each variation $\N_s$ of $\N$, the support of $\N_s -
\N$ is compact.
\item[(V2)]\label{V2} For any paths $(x_s,t_s)$ with $x_0 = 0$ and $t_0 =1$ we
have
\begin{equation}\label{eq:CMLemma0.15}
\left. \frac{\del^2}{\del s^2} \lbr \mathcal{F}_{x_s,t_s}(\N_s) \rbr
\right|_{s=0} < 0.
\end{equation}
\end{enumerate}

For the family $\N_s$, let $\Xi$ be as in Definition \ref{Xidef}.  Also, set
\begin{equation}
B^{\circ}(r):= \{ (x,t,s) : 0 < |x| + \brs{\log t} + s < r \}.
\end{equation}
With this definition we claim that there exists $\epsilon' > 0$ so that for $s
\neq 0$ and $|s| \leq \epsilon'$ one has
\begin{equation} \label{CMLemma0.15main}
\la (\N_s) := \sup_{x_0,t_0} \Xi(x_0,t_0,s) < \Xi(0,1,0) = \la(\N).
\end{equation}
Following \cite{CM} we proceed in five steps:%
\begin{enumerate}
\item[(1)] $\Xi$ has a strict local maximum at $(0,1,0)$.
\item[(2)] $\Xi(\cdot,\cdot,0)$ has a strict global maximum at $(0,1,0)$.
\item[(3)] $\frac{\del}{\del s} \lbr \Xi(x_0,t_0,s)\rbr$ is uniformly bounded on
compact sets.
\item[(4)] For $|x_0|$ sufficiently large, $\Xi(x_0,t_0,s) < \Xi(0,1,0)$.
\item[(5)] For $| \log t_0 |$ sufficiently large, $\Xi(x_0,t_0,s) < \Xi(0,1,0)$.
\end{enumerate}
Together these five pieces will yield the result as detailed at the end of the
proof.

\noindent \textbf{Proof of (1):} 
Since $\N$ is a soliton, by Corollary \ref{Fcrit}, given a path $(x_s,t_s)$ with
$(x_0,t_0) = (0,1)$ and a variation $\N_s$ of $\N$, we have
$\left. \frac{\del}{\del s}\lbr \Xi(x_s,t_s, s) \rbr \right|_{s=0} = 0$, which
implies that $(0,1,0)$ is a critical point of $\Xi$. Consider one such path of
the form $(sy,1+as)$ for $y \in \mathbb R^n$, $a \in \mathbb{R}$ and some
variation of $\N$ given by $\N_{bs}$ for some $b \neq 0$. Then we have that, by
property (V2),
\begin{equation}
\left. \frac{\del^2}{\del s^2} \lbr \Xi(sy,1+as,bs)  \rbr \right|_{s=0}=b^2
\left. \frac{\del^2}{\del s^2} \lbr \mathcal{F}_{x_s,y_s}(\N_{s})  \rbr
\right|_{s=0} \leq 0,
\end{equation}
where here $x_s = s \frac{y}{b}$ and $t_s = 1 + \frac{a}{b}s$.

Now we consider the second variation when $b = 0$. As an immediate application
of Proposition \ref{CMLemma} we have that $\left. \frac{\del^2}{\del s^2} \lbr
\Xi(sy,1+as,0) \rbr \right|_{s=0} < 0$. Therefore $\N^2 \Xi$, the Hessian of
$\Xi$, is negative definite at $(0,1,0)$, and thus $\Xi$ attains a strict local
maximum at this point. We may choose $\epsilon' \in (0,\epsilon)$ such that for
$(x_0,t_0,s) \in B^{\circ}({\epsilon'})$ we have that
\begin{equation*}
\Xi(x_0,t_0,s) < \Xi(0,1).
\end{equation*}

\noindent \textbf{Proof of (2):} This is an immediate result of Proposition
\ref{CMLemma}. Therefore, $\la(\N) = \Xi(0,1)$ and we may choose $\delta>0$ so
that for all points of the form $(x_0,t_0,0)$ outside $B^{\circ}({\epsilon' /
4})$
we have that
\begin{equation*}
\Xi(x_0,t_0) < \Xi(0,1) - \delta.
\end{equation*}

\noindent \textbf{Proof of (3):} Using Proposition \ref{zYMfirstvar}, we see
that
\begin{align*}
\frac{\del}{\del s} \lbr \Xi(x_0,t_0,s) \rbr
&=
 4 t_0^2 \int_{\bRn}{ \left\langle \dot{\gG}_s , D_s^* F_s + \left(
\frac{(x-x_0)}{2 t_0} \hook F_s \right) \right\rangle G_0 dV}.
\end{align*}
Observe that $\tfrac{\del \Xi}{\del s}$ is continuous in all three variables
$x_0$, $t_0$ and $s$ and thus uniformly bounded on compact sets.

\noindent \textbf{Proof of (4):}  By hypothesis, we may choose $R > 0$ so that
the support of $\N - \N_s$ is contained in $B(R) \subset \bRn$. Let $\rho > 0$
and consider $|x_0| > \rho + R$. Then we have that
\begin{align*}
\Xi(x_0,t_0,s) 
&= t_0^2 \int_{\mathbb{R}^n} \left| F_{\N_s} \right|^2 G_{0} dV \\
& = t_0^2 \int_{B(R)} \left| F_{\N_s} \right|^2 G_{0} dV +  t_0^2
\int_{\mathbb{R}^n \backslash B(R)} \left| F_{\N} \right|^2 G_{0} dV \\
& \leq t_0^{\frac{4-n}{2}} (4 \pi)^{- \frac{n}{2}} \int_{B(R)} \left| F_{\N_s}
\right|^2 e^{-\frac{|x-x_0|^2}{4 t_0}} dV +  \Xi(x_0,t_0,0) \\
&\leq t_0^{\frac{4-n}{2}} (4 \pi)^{- \frac{n}{2}} e^{-\frac{\rho^2}{4
t_0}}\int_{B(R)} \left| F_{\N_s} \right|^2  dV +  \Xi(x_0,t_0,0).
\end{align*}
By compactness of the domain $B(R) \times [-2 \epsilon , 2 \epsilon ]$ we know
that $\int_{B(R)} \left| F_{\N_s} \right|^2  dV < C_R$ for some $C_R \in
\mathbb{R}$. Therefore we conclude that
\begin{equation}\label{eq:sc4Xiest}
\Xi(x_0,t_0,s) \leq (4 \pi)^{-\frac{n}{2}} C_R t_0^{\frac{4-n}{2}}
e^{-\frac{\rho^2}{4 t_0}} + \Xi(x_0,t_0,0).
\end{equation}
Define the quantity
\begin{equation}\label{eq:sc4muRdef}
\mu_{\rho}(\tau) := \tau^{\frac{4-n}{2}} e^{- \frac{\rho^2}{4 \tau}}.
\end{equation}
We note in particular that
\begin{equation*}
\mu_1 \left( \frac{\tau}{\rho^2} \right) =\left( \frac{\tau}{\rho^2}
\right)^{\frac{4-n}{2}} e^{- \frac{\rho^2}{4 \tau}} = \rho^{n-4}
\mu_{\rho}(\tau).
\end{equation*}
The function $\mu_1$ is clearly continuous and therefore bounded and also
satisfies the following limit for $\ga \in \{ \infty, 0 \}$,
\begin{align*}
\lim_{\tau \to \ga} \mu_1 \left( \tau \right) = \lim_{\tau \to \ga}
\tau^{\frac{4-n}{2}} e^{-\frac{1}{4 \tau }} = 0.
\end{align*}
We thus conclude that
\begin{equation}
\lim_{\rho \to \infty} \left( \sup_{\tau> 0} \mu_{\rho}(\tau) \right) =
\lim_{\rho \to \infty} \sup_{\tau> 0}\left(   \rho^{4-n} \mu_1 \left(
\frac{\tau}{\rho^2} \right) \right) = 0.
\end{equation}
Therefore, as a consequence of (2) combined with this above limit, we conclude
that for $|x_0|$ sufficiently large we have that $\Xi(x_0,t_0,s) < \Xi(0,1,0)$,
as desired.

\noindent \textbf{Proof of (5):} We first perform the following manipulation
\begin{gather}
\begin{split}\label{eq:s5Ximanipulation}
\Xi(x_0,t_0,s) 
&= t_0^2 \int_{\mathbb{R}^n} \left| F_{\N_s} \right|^2 G_{0} dV \\
& = t_0^2 \int_{B(R)} \left| F_{\N_s} \right|^2 G_{0} dV +  t_0^2
\int_{\mathbb{R}^n \backslash B(R)} \left| F_{\N_s} \right|^2 G_{0} dV \\
& \leq t_0^{\frac{4-n}{2}} (4 \pi)^{- \frac{n}{2}} \int_{B(R)} \left| F_{\N_s}
\right|^2 G_0 dV +  \Xi(x_0,t_0,0) \\
& \leq C_R t_0^{\frac{4-n}{2}} (4 \pi)^{- \frac{n}{2}} +  \Xi(x_0,t_0,0).
\end{split}
\end{gather}
As a result of this, we also obtain the estimate
\begin{equation*}
\sup_{t_0 \geq 1} \Xi(x_0,t_0,s) \leq C_R (4 \pi)^{-\frac{n}{2}} + \la (\N).
\end{equation*}
We break into two cases.  First, suppose $t_0$ is very large.   Combining
\eqref{eq:s5Ximanipulation} with part (2) we obtain the claim.  The case when
$t_0$ is small, in particular $t_0 \leq 1$, is more difficult. Appealing to
Lemma \ref{zYMfirstvar} with $\dot{t_0} = 1$, we have that for some fixed $R>0$
\begin{align*}
\frac{\del}{\del t_0} \lbr \Xi(x_0, t_0, s) \rbr 
&=   \int_{\mathbb{R}^n \backslash B(R)} \left( t_0 \left( \frac{4-n}{2} \right)
+ \frac{|x-x_0|^2}{4} \right) |F_{\N}|^2 G_0 dV \\
& \hsp +  \int_{B(R)} \left( t_0 \left( \frac{4-n}{2} \right) +
\frac{|x-x_0|^2}{4} \right) |F_{\N_s}|^2 G_0 dV  \\
& \geq  t_0 \int_{\mathbb{R}^n } \left( \frac{4-n}{2} \right) |F_{\N_s}|^2 G_0 +
\left( \frac{4-n}{2} \right) C_R t_0.
\end{align*}
Arguing similarly to Lemma \ref{Fdecaylemma}, the integral on the left is
bounded. Furthermore since $t_0 \leq 1$ we have that for some $C_0 \in
\mathbb{R}$,
\begin{equation}\label{eq:Xit0deriv}
\frac{\del }{\del t_0} \lbr \Xi(x_0,t_0,s) \rbr \geq - C_0.
\end{equation}
Note that $C_0$ is independent of $x_0$, $t_0$, and $s$ subject to the
restriction $|x_0|<R$.  Using Lemma \ref{Fdecaylemma} and Step (2), we have that
\begin{equation}
\Xi(0,1,0) = \la(\N) > 0.
\end{equation}
Choose $\ga > 0$ so that $3 \ga < \la(\N)$, and choose $t_{\ga} =
\frac{\ga}{C_0}$.
For any $x \in \bRn$ and $s \in [-\epsilon, \epsilon]$, by Lemma
\ref{Fdecaylemma} there exists some $t_{x,s} > 0$ such that for all $t_0 \leq
t_{x,s}$ we have $|\Xi(x,t_0,s)|<\ga$.

On the set $\overline{B(R+1)} \times [-\epsilon, \epsilon]$ we will construct a
finite open cover as follows. The cover consists of balls $b_i$ of radius
$r_i>0$ centered at $(x_i,t_i)$. Each $b_i$ has an associated time $ t_i  \leq
\min \{ t_{\ga}, 1 \}$ where
\begin{enumerate}
\item Given $(x,s)$ there exists and index $i(x,s)$ such that $(x,s) \in
b_{i(x,s)}$.
\item For each $b_i$ the associated $t_i$ is such that
\begin{equation*}
\left. \Xi(x,t_i,s) \right|_{b_i} < \ga.
\end{equation*}
Note that this choice follows from the existence of $t_{x,s}$ and the continuity
of $\Xi$.
\end{enumerate}
Choosing a finite subcover of the $b_i$'s we let $\bar{t}$ be the minimum of all
corresponding $t_i$. Then as a result of the derivative \eqref{eq:Xit0deriv} we
have that for any triple $(x,t_0,s)$ with $s \in [-\epsilon,\epsilon]$, and $x
\in \overline{B_{R}}$, and $t_0 \leq \bar{t}$,
\begin{align*}
\Xi(x,t_0,s)   \leq \Xi(x, t_{i(x,s)},s) + C_0 \left(t_{i(x,s)} - t_0 \right) 
\leq 2\ga  < \la(\N).
\end{align*}
Claim (5) follows.\\

Given claims (1)-(5) we finish the proof by dividing the domain into regions
corresponding to the size of $|x_0| + \brs{\log t_0}$.  Using (1), when $s$ is
sufficiently small there exists some $r > 0$ such that $\Xi(x_0,t_0,s) <
\Xi(x_0,t_0,0)$ for $(x_0,t_0)$ within the following region
\begin{equation*}
\mathfrak{R}_1 := \{ (x_0,t_0) : |x_0| + |\log t_0| < r \}.
\end{equation*}
Using (4) and (5) there exists an $R>0$ such that $\Xi(x_0,t_0,s) <
\Xi(x_0,t_0,0)$ for $(x_0,t_0)$ in the following region.
\begin{equation*}
\mathfrak{R}_2 := \{ (x_0,t_0) : |x_0| + |\log t_0| > R \}.
\end{equation*}
Therefore it remains to consider 
\begin{equation}
\mathfrak{R}_3 := \{ (x_0,t_0) : R > |x_0| + |\log t_0| > r \}.
\end{equation}
Given $(x_0,t_0) \in \mathfrak{R}_3$, we know by (2) that $\Xi(x_0,t_0,0) <
\la(\N)$, and by (3) that the $s$ derivative of $\Xi$ is uniformly bounded. So
we may choose a $\delta > 0$ such that $\Xi$ restricted to the region
$\mathfrak{R}_3 \times [-\delta,\delta]$ is bounded above by $\la(\N)$. 
Therefore, \eqref{CMLemma0.15main} holds on $\bigcup_{i=1}^3\mathfrak{R}_i$ and
as this union constitutes the entire space-time domain, the result follows.
\end{proof}

\section{Gastel shrinkers} \label{gastelsec}

In this section we recall Gastel's construction \cite{Gastel} of
$SO(n)$-shrinking solitons, and compute their entropies.  Let $\{ \zeta_i \}_{i
=1}^n \subset \mathfrak{so}(n)$, be the basis given by, for $\ga, \gb \in [1,n]
\cap \mathbb{N}$,
\begin{equation*}
\zeta_{i \ga}^{\gb} := \delta_i^{\gb} x_{\ga} - \delta_{i \ga} x^{\gb}.
\end{equation*}
Now let $r$ denote the radius on $\bRn$, and fix some function $\eta :
[0,\infty) \to \mathbb{R}$. Consider the $SO(n)$-equivariant connections $\nab$
with coefficient matrices given by
\begin{equation} \label{gastelshr}
\gG_{i\ga}^{\gb}(x) := - \frac{\eta(r)}{r^2} \zeta_{i\ga}^{\gb}(x).
\end{equation}

\begin{prop} For $5 \leq n \leq 9$, and
\begin{align} \label{gasteleta}
\eta(r) := \frac{r^2}{a_n r^2 + b_n},
\end{align}
where
\begin{equation}\label{eq:andef}
a_n := \sqrt{\frac{n-2}{8}}, \quad b_n = 3(n-2) -
\tfrac{1}{\sqrt{2}}(n+2)(n-2)^{1/2} \geq 0,
\end{equation}
the connection $\N$ defined by \ref{gastelshr} is a shrinking soliton.
\begin{proof}
As computed in \cite{Gastel} \S 2.1, under the ansatz of (\ref{gastelshr}), the
Yang-Mills flow reduces to
\begin{equation}\label{eq:YMeta}
\eta_t = \eta_{rr} + (n-3) \frac{\eta_r}{r} - (n-2) \frac{\eta (\eta - 1)(\eta
-2)}{r^2}.
\end{equation}
With $\eta(r)$ as in (\ref{gasteleta}) and $a_n, b_n$ as in (\ref{eq:andef}), we
set
\begin{align*}
\eta(r,t) = \eta \left( \frac{r}{\sqrt{-t}} \right).
\end{align*}
We now compute various derivatives.
\begin{align}
\begin{split}\label{eq:etaderiv}
\eta(r,t) &=  \frac{r^2}{a_n r^2 - b_n t}\\
\eta_t(r,t) &= \frac{b_n r^2t}{(a_n r^2 - b_n t)^{2}}\\
\eta_r(r,t) 
&= \frac{-2rb_nt}{(a_n r^2 - b_nt)^2},\\
\eta_{rr}(r,t) &= \frac{(6a_n b_n r^2 t + 2 b_n^2 t^2)}{(a_n r^2 - b_n t)^3}\\
(\eta - 1) &= \frac{(1-a_n)r^2 + b_n t}{(a_n r^2 - b_n t)},\\
(\eta - 2) &= \frac{(1-2a_n)r^2 + 2b_n t}{(a_n r^2 - b_n t)}.
\end{split}
\end{align}
We plug the identities of \eqref{eq:etaderiv} into \eqref{eq:YMeta} and obtain
\begin{align*}
0 &=\frac{1}{(a_n r^2 - b_n t)^3} \left(6 a_nb_n r^2 t + 2 b_n^2 t^2 +
(n-3)\left(-2a_n b_n t r^2 + 2 b_n^2 t^2 \right)\right)\\
& \hsp + \frac{1}{(a_n r^2 - b_n t)^3} \left(-
(n-2)\left(\left(1-a_n\right)\left(1-2a_n \right)r^4 + \left(3 - 4a_n\right) b_n
t r^2 + 2 b_n^2 t^2 \right)-a_n b_n r^4 + b_n^2 t r^2 \right).
\end{align*}
We collect up the coefficients within the numerator of $r$ to various powers.
The coefficient of $1$ is given by
\begin{align*}
2 b_n^2 t^2& + (n-3)2 b_n^2 t^2 - (n-2) 2 b_n^2 t^2 \\
&= (1 + n - 3 - n + 2) 2 b^2 t^2 \\
& = 0.
\end{align*}
The coefficient of $r^2$ is given by
\begin{align*}
6a_nb_nt& + (n-3)\left(-2a_nb_nt \right)- (n-2)\left(3 -4a_n \right)b_nt + b_n^2
t\\
&= 2a_nb_nt(n+2) - 3(n-2)b_nt + b_n^2 t.
\end{align*}
We will compute and then combine portions of the above quantity. First, consider
the product
\begin{align*}
a_n b_n &= \left( \frac{n-2}{8} \right)^{1/2} \left( 3(n-2) +
\frac{1}{\sqrt{2}}\left(n+2 \right)(n-2)^{1/2} \right) \\
&= \frac{3}{\sqrt{2}}(n-2)^{3/2} + \frac{1}{4}(n-2)(n+2).
\end{align*}
Therefore we have
\begin{equation}\label{eq: YMetaverQ1}
2a_nb_n (n+2) = \frac{6}{\sqrt{2}}(n+2)(n-2)^{3/2} - \frac{1}{2}(n-2)(n+2)^2.
\end{equation}
Additionally there is 
\begin{align}
\begin{split}\label{eq: YMetaverQ2}
b_n^2 t&= \left( 3(n-2) - \tfrac{1}{\sqrt{2}}(n+2)(n-2)^{1/2}\right)^2 t \\
&= 9 (n-2)^2 t - \frac{6}{\sqrt{2}} (n+2)(n-2)^{3/2}t + \frac{1}{2} (n+2)^2(n-2)
t.
\end{split}
\end{align}
Lastly we have
\begin{equation}\label{eq: YMetaverQ3}
-3(n-2)b_n t= - 9 (n-2)^2 t + \frac{1}{2} (n+2)^2 (n-2)t.
\end{equation}
Combining together we have
\begin{equation}
\eqref{eq: YMetaverQ1} + \eqref{eq: YMetaverQ2} + \eqref{eq: YMetaverQ3} = 0,
\end{equation}
as desired. Lastly we consider the coefficient of $r^4$.
\begin{align*}
-a_n b_n& - (n-2)(1-a_n)(1-2 a_n)\\
&= - \frac{3}{2 \sqrt{2}}(n-2)^{3/2} + \frac{1}{4}(n-2)(n+2) - (n-2) +
\frac{3}{2 \sqrt{2}}(n-2)^{3/2} - \frac{(n-2)}{4} \\
& =\frac{(n-2)}{4} \left( (n+2) -4 - (n-2)\right)\\
&=0.
\end{align*}
Thus we conclude that $\eta(r,t)$ satisfies \eqref{eq:YMeta} and thus this
particular connection is a solution to Yang-Mills flow. Next we verify that
$\nabla_{-1}(x)$ is a soliton by verifying the scaling law
\eqref{solitonscallaw} of Lemma \ref{wnkovescallawsoliton}. Observe that
\begin{align*}
\la \gG_{i \ga}^{\gb} (\la x, \la^2t ) &=- \la \frac{\eta( \la r, \la^2 t
)}{\la^2 r^2} \zeta_{i \ga}^{\gb}(\la x) \\
&=- \la \frac{1}{\la^2 r^2} \left( \frac{\la^2 r^2}{a_n \la^2 r^2 - \la^2 b_n
t}\right) \left( \delta_i^{\gb} \la x_{\ga} -\delta_{i \ga} \la x^{\gb}\right)
\\
&= -\frac{\eta(r,t)}{r^2} \zeta_{i \ga}^{\gb}(x) \\
&= \gG_{i \ga}^{\gb}(x,t).
\end{align*}
We conclude that $\nab_{-1}$ is a soliton.
\end{proof}
\end{prop}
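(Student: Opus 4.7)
The plan is to reduce the soliton condition to an ODE for the radial profile $\eta$, substitute the ansatz, and verify the resulting algebraic identity in the parameters $a_n, b_n$. By Lemma \ref{wnkovescallawsoliton} and Proposition \ref{prop:sssiffgtfm}, it suffices to exhibit a time-dependent family $\N_t$ solving Yang-Mills flow whose coefficient matrices obey the scaling law \eqref{solitonscallaw} and whose $t=-1$ slice agrees with the given $\N$. Since the $SO(n)$-equivariant ansatz $\gG_{i\ga}^{\gb}(x,t) = -\tfrac{\eta(r,t)}{r^2}\zeta_{i\ga}^{\gb}(x)$ is preserved by Yang-Mills flow, I would invoke the reduction from \cite{Gastel} \S 2.1 to turn the flow equation into the scalar PDE \eqref{eq:YMeta}.

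Next, motivated by the desired scaling invariance, I would take the self-similar ansatz $\eta(r,t) = \eta_0\left(r/\sqrt{-t}\right)$ with $\eta_0(\rho) = \rho^2/(a_n\rho^2 + b_n)$, so that in spacetime one has $\eta(r,t) = r^2/(a_n r^2 - b_n t)$. I would then compute $\eta_t, \eta_r, \eta_{rr}$ and the algebraic factors $\eta-1, \eta-2$, all of which share the denominator $(a_n r^2 - b_n t)^k$, multiply the PDE through by $(a_n r^2 - b_n t)^3$, and regroup the resulting polynomial in $(r,t)$ as a polynomial in $r^2$ whose coefficients are polynomials in $t$ of a determined degree.

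The main step, and the genuine obstacle, is the algebraic verification that the coefficients of $r^0$, $r^2$, $r^4$ in the resulting polynomial vanish identically for the prescribed $a_n, b_n$. The $r^0$ and $r^4$ coefficients collapse by elementary manipulation: the $r^0$ piece is proportional to $(1 + (n-3) - (n-2)) b_n^2 t^2 = 0$, and the $r^4$ piece reduces to $\tfrac{n-2}{4}\left((n+2) - 4 - (n-2)\right) = 0$ after expanding $(1-a_n)(1-2a_n)$ and $a_n b_n$. The delicate coefficient is that of $r^2$, where the three contributions $2a_nb_n(n+2)$, $-3(n-2)b_n$, and $b_n^2$ must cancel exactly. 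This is precisely the equation that pins down $b_n$ as a root of the quadratic $b^2 - 3(n-2)b + \tfrac{1}{2}(n+2)^2(n-2) - \tfrac{6}{\sqrt 2}(n+2)(n-2)^{3/2} = 0$ once $a_n = \sqrt{(n-2)/8}$ is chosen; the stated $b_n = 3(n-2) - \tfrac{1}{\sqrt 2}(n+2)(n-2)^{1/2}$ is one of its roots, and it is nonnegative precisely on the range $5 \leq n \leq 9$, which is where the dimensional restriction enters.

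Once this ODE is verified, the final step is to confirm the scaling law directly: under $(x,t) \mapsto (\la x, \la^2 t)$ the rational function $\eta(r,t)/r^2$ is homogeneous of degree $-2$ in $(r,\sqrt{-t})$, while $\zeta_{i\ga}^{\gb}(\la x) = \la^2 \zeta_{i\ga}^{\gb}(x)$, so $\la\,\gG_{i\ga}^{\gb}(\la x, \la^2 t) = \gG_{i\ga}^{\gb}(x,t)$ as required by \eqref{solitonscallaw}. Applying Lemma \ref{wnkovescallawsoliton} then gives self-similarity, and Proposition \ref{prop:sssiffgtfm} identifies the $t=-1$ slice as a soliton in the sense of Definition \ref{def:soliton}.
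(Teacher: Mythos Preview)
Your approach is essentially identical to the paper's: reduce via Gastel's computation to the scalar PDE \eqref{eq:YMeta}, substitute the self-similar rational ansatz $\eta(r,t)=r^2/(a_n r^2 - b_n t)$, clear denominators, and verify that the coefficients of $r^0$, $r^2$, $r^4$ vanish (the $r^2$ coefficient being the one that pins down $b_n$), then check the scaling law \eqref{solitonscallaw} directly. One small slip in your scaling check: $\zeta_{i\ga}^{\gb}(x)=\delta_i^{\gb}x_\ga - \delta_{i\ga}x^\gb$ is homogeneous of degree $1$ in $x$, not $2$, so the balance is $\la\cdot\la^{-2}\cdot\la=1$ rather than what you wrote.
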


\begin{prop} Let $\N_n$ denote the Gastel soliton on $\mathbb R^n$.  Then the
entropy of $\N_n$ is approximately
\begin{align*}
 \begin{tabular}{|c|c|}
  \hline
  $n$ & $\gl(\N_n)$\\
  \hline
  $5$ & $638.121$\\
  \hline
  $6$ & $716.109$\\
  \hline
  $7$ & $929.899$\\
  \hline
  $8$ & $1292.44$\\
  \hline
  $9$ & $1865.98$\\
  \hline
 \end{tabular}
\end{align*}

\begin{proof} We note that by Proposition \ref{CMLemma} it suffices to compute
$\FF_{0,1}(\N)$, which we do numerically.  In the midst of the computation to
obtain \eqref{eq:YMeta} within \cite{Gastel} \S 2.1, if we set $\phi = \frac{-
\eta}{r^2}$, then
\begin{equation}\label{eq:phiF}
F_{jk \ga}^{\gb} = \left( 2 \phi + r^2 \phi^2 \right)\left( \delta_{k}^{\gb}
\delta_{\ga j}  - \delta_{k \ga} \delta^{\gb}_{j} \right) + \left(
\frac{\phi_r}{r} - \phi^2 \right) \left( \delta_k^{\gb} x_{\ga}x_j + \delta_{j
\ga} x^{\gb}x_k - \delta_{k \ga} x^{\gb}x_j - \delta_j^{\gb} x_{\ga}x_k  
\right).
\end{equation}
We compute $\mathcal{F}_{x_0,t_0}$ by first considering
\begin{align*}
& |F|^2 =-g^{ik}g^{j \ell} \left( F_{ij \ga}^{\gb} \right) \left( F_{k \ell
\gb}^{\ga}  \right)\\
& =-g^{ik}g^{j \ell} \left( \left( 2 \phi + r^2 \phi^2 \right)\left(
\delta_{j}^{\gb} \delta_{\ga i}  - \delta_{j \ga} \delta^{\gb}_{i} \right) +
\left( \frac{\phi_r}{r} - \phi^2 \right) \left( \delta_j^{\gb} x_{\ga}x_i   +
\delta_{i \ga} x^{\gb}x_j - \delta_{j \ga} x^{\gb}x_i - \delta_i^{\gb}
x_{\ga}x_j \right) \right)\\
& \hsp \times \left(\left( 2 \phi + r^2 \phi^2 \right)\left( \delta_{\ell}^{\ga}
\delta_{\gb k}  - \delta_{\ell \gb} \delta^{\ga}_{k} \right) + \left(
\frac{\phi_r}{r} - \phi^2 \right) \left( \delta_{\ell}^{\ga} x_{\gb}x_k   +
\delta_{k \gb} x^{\ga}x_{\ell} - \delta_{\ell \gb} x^{\ga}x_{k} - \delta_k^{\ga}
x_{\gb}x_{\ell} \right)\right) \\
&= - g^{ik}g^{j \ell} \left( 2 \phi + r^2 \phi^2 \right)^2 \left[ \left(
\delta_{j}^{\gb} \delta_{\ga i}  - \delta_{j \ga} \delta^{\gb}_{i} \right)\left(
\delta_{\ell}^{\ga} \delta_{\gb k}  - \delta_{\ell \gb} \delta^{\ga}_{k} \right)
\right]_{T_1} \\
& \hsp - g^{ik}g^{j \ell} \left( 2 \phi + r^2 \phi^2 \right)\left(
\frac{\phi_r}{r} - \phi^2 \right) \left[ \left( \delta_{j}^{\gb} \delta_{\ga i} 
- \delta_{j \ga} \delta^{\gb}_{i} \right) \left( \delta_{\ell}^{\ga} x_{\gb}x_k 
 + \delta_{k \gb} x^{\ga}x_{\ell} - \delta_{\ell \gb} x^{\ga}x_{k} -
\delta_k^{\ga} x_{\gb}x_{\ell} \right) \right]_{T_2} \\
& \hsp - g^{ik}g^{j \ell}\left( \frac{\phi_r}{r} - \phi^2 \right) \left( 2 \phi
+ r^2 \phi^2 \right) \left[ \left( \delta_j^{\gb} x_{\ga}x_i   + \delta_{i \ga}
x^{\gb}x_j - \delta_{j \ga} x^{\gb}x_i - \delta_i^{\gb} x_{\ga}x_j \right)\left(
\delta_{\ell}^{\ga} \delta_{\gb k}  - \delta_{\ell \gb} \delta^{\ga}_{k} \right)
\right]_{T_3} \\
 & \hsp - g^{ik}g^{j \ell}  \left( \frac{\phi_r}{r} - \phi^2 \right)^2 \left[
\left( \delta_{\ell}^{\ga} x_{\gb}x_k   + \delta_{k \gb} x^{\ga}x_{\ell} -
\delta_{\ell \gb} x^{\ga}x_{k} - \delta_k^{\ga} x_{\gb}x_{\ell} \right) \left(
\delta_j^{\gb} x_{\ga}x_i   + \delta_{i \ga} x^{\gb}x_j - \delta_{j \ga}
x^{\gb}x_i - \delta_i^{\gb} x_{\ga}x_j \right) \right]_{T_4}.
\end{align*}
We first expand $T_1$.
\begin{align*}
T_1 &= \left( \delta_{j}^{\gb} \delta_{\ga i}\delta_{\ell}^{\ga} \delta_{\gb k}
- \delta_{j}^{\gb} \delta_{\ga i}\delta_{\ell \gb} \delta^{\ga}_{k} - \delta_{j
\ga} \delta^{\gb}_{i} \delta_{\ell}^{\ga} \delta_{\gb k} + \delta_{j \ga}
\delta^{\gb}_{i} \delta_{\ell \gb} \delta^{\ga}_{k} \right) \\
&= \left( \delta_{j k} \delta_{\ell i} - \delta_{j \ell} \delta_{ik} - \delta_{j
\ell} \delta_{ik}+ \delta_{jk} \delta_{i \ell} \right) \\
&= 2 \left( \delta_{j k} \delta_{\ell i} - \delta_{j \ell} \delta_{ik} \right).
\end{align*}
Contracting via multiplication by $g^{ik}g^{j \ell}$ yields
\begin{align*}
 g^{ik}g^{j \ell} T_1 &= 2 \left( \delta_{j k}^2 - \delta_{j j} \delta_{kk}
\right)\\
 &=- 2 n\left( n - 1 \right).
\end{align*}
Next we expand $T_2$.
\begin{align*}
T_2 &= \delta_{j}^{\gb} \delta_{\ga i} \left( \delta_{\ell}^{\ga} x_{\gb}x_k   +
\delta_{k \gb} x^{\ga}x_{\ell} - \delta_{\ell \gb} x^{\ga}x_{k} - \delta_k^{\ga}
x_{\gb}x_{\ell} \right)\\
& \hsp - \delta_{j \ga} \delta^{\gb}_{i} \left( \delta_{\ell}^{\ga} x_{\gb}x_k  
+ \delta_{k \gb} x^{\ga}x_{\ell} - \delta_{\ell \gb} x^{\ga}x_{k} -
\delta_k^{\ga} x_{\gb}x_{\ell} \right)\\
&=  \delta_{i \ell} x_{j}x_k   + \delta_{j k} x_i x_{\ell} - \delta_{j \ell} x_i
x_{k} - \delta_{ik}x_{j}x_{\ell}  - \delta_{j \ell}x_{i}x_k   - \delta_{ik}
x_{j}x_{\ell} + \delta_{i \ell} x_{j}x_{k} + \delta_{jk} x_{i} x_{\ell} \\
&= 2 \left( \delta_{i \ell} x_{j}x_k   + \delta_{j k} x_i x_{\ell} - \delta_{j
\ell} x_i x_{k} - \delta_{ik}x_{j}x_{\ell} \right).
\end{align*}
Contracting via multiplication by $g^{ik}g^{j \ell}$ yields
\begin{align*}
 g^{ik}g^{j \ell} T_2 &= 2 \left( |x|^2   + |x|^2 - n |x|^2 - n |x|^2 \right) \\
 &=- 4 \left( n- 1 \right) |x|^2.
\end{align*}
Expanding $T_3$ we obtain
\begin{align*}
T_3 &=  \left( \delta_j^{\gb} x_{\ga}x_i   + \delta_{i \ga} x^{\gb}x_j -
\delta_{j \ga} x^{\gb}x_i - \delta_i^{\gb} x_{\ga}x_j \right)
\delta_{\ell}^{\ga} \delta_{\gb k}  \\
& \hsp - \left( \delta_j^{\gb} x_{\ga}x_i   + \delta_{i \ga} x^{\gb}x_j -
\delta_{j \ga} x^{\gb}x_i - \delta_i^{\gb} x_{\ga}x_j \right) \delta_{\ell \gb}
\delta^{\ga}_{k}\\
&=  \left( \delta_j^{\gb} \delta_{\ell}^{\ga} \delta_{\gb k}  x_{\ga}x_i   +
\delta_{i \ga} \delta_{\ell}^{\ga} \delta_{\gb k}  x^{\gb}x_j - \delta_{j \ga}
\delta_{\ell}^{\ga} \delta_{\gb k}  x^{\gb}x_i -
\delta_i^{\gb}\delta_{\ell}^{\ga} \delta_{\gb k}  x_{\ga}x_j \right) \\
& \hsp - \left( \delta_j^{\gb} \delta_{\ell \gb} \delta^{\ga}_{k}x_{\ga}x_i   +
\delta_{i \ga} \delta_{\ell \gb} \delta^{\ga}_{k} x^{\gb}x_j - \delta_{j
\ga}\delta_{\ell \gb} \delta^{\ga}_{k} x^{\gb}x_i - \delta_i^{\gb} \delta_{\ell
\gb} \delta^{\ga}_{k}x_{\ga}x_j \right) \\
&= 2 \left( \delta_{j k}  x_{\ell}x_i   + \delta_{i \ell} x_{k} x_j - \delta_{j
\ell} x_{k} x_i - \delta_{ik} x_{\ell}x_j \right).
\end{align*}
Contracting by multiplying $g^{ik} g^{j \ell}$ we obtain
\begin{equation*}
 g^{ik}g^{j \ell} T_3 = - 4 \left( n -1 \right)|x|^2.
\end{equation*}
Lastly we expand $T_4$ and obtain
\begin{align*}
T_4 &= \delta_{\ell}^{\ga} x_{\gb}x_k  \left( \delta_j^{\gb} x_{\ga}x_i   +
\delta_{i \ga} x^{\gb}x_j - \delta_{j \ga} x^{\gb}x_i - \delta_i^{\gb}
x_{\ga}x_j \right) \\
& \hsp + \delta_{k \gb} x^{\ga}x_{\ell} \left( \delta_j^{\gb} x_{\ga}x_i   +
\delta_{i \ga} x^{\gb}x_j - \delta_{j \ga} x^{\gb}x_i - \delta_i^{\gb}
x_{\ga}x_j \right) \\
& \hsp   - \delta_{\ell \gb} x^{\ga}x_{k} \left( \delta_j^{\gb} x_{\ga}x_i   +
\delta_{i \ga} x^{\gb}x_j - \delta_{j \ga} x^{\gb}x_i - \delta_i^{\gb}
x_{\ga}x_j \right) \\
& \hsp  - \delta_k^{\ga} x_{\gb}x_{\ell} \left( \delta_j^{\gb} x_{\ga}x_i   +
\delta_{i \ga} x^{\gb}x_j - \delta_{j \ga} x^{\gb}x_i - \delta_i^{\gb}
x_{\ga}x_j \right)\\
&=  \left( x_{j}x_k  x_{\ell}x_i   + \delta_{i \ell} x_k x_j |x|^2 - \delta_{j
\ell} x_k x_i |x|^2 - x_{i}x_k  x_{\ell}x_j \right) \\
& \hsp + \left( \delta_{j k} x_{\ell} x_i |x|^2  + x_{i}x_{\ell} x_k x_j -  x_j
x_{\ell} x_k x_i - \delta_{ik} x_{\ell} x_j |x|^2 \right) \\
& \hsp   -\left( \delta_{j \ell} x_{k} x_i |x|^2   +  x_{i}x_{k} x_{\ell}x_j - 
x_j x_{k} x_{\ell} x_i - \delta_{i \ell} x_{k} x_j |x|^2 \right) \\
& \hsp  -  \left(   x_{j}x_{\ell} x_{k}x_i   + \delta_{i k} x_{\ell} x_j |x|^2 -
\delta_{j k}  x_{\ell} x_i |x|^2 - x_{i}x_{\ell}x_{k}x_j \right)\\
&=2 \left(  \delta_{i \ell} x_k x_j |x|^2 - \delta_{j \ell} x_k x_i |x|^2 +
\delta_{j k} x_{\ell} x_i |x|^2  - \delta_{ik} x_{\ell} x_j |x|^2 \right).
\end{align*}
Contracting indices $g^{ik} g^{j \ell}$ we obtain
\begin{equation*}
 g^{ik}g^{j \ell} T_4 = -4 \left( n-1 \right)|x|^4.
\end{equation*}
Therefore, we conclude that 
\begin{align}
\begin{split}\label{eq:normFshrinkex}
| F |^2 &=  2 n\left( n - 1 \right) \left( 2 \phi + r^2 \phi^2 \right)^2 +  8
\left( n -1 \right) r^2 \left( 2 \phi + r^2 \phi^2 \right)\left(
\frac{\phi_r}{r} - \phi^2 \right)  + 4 \left( n-1 \right) r^4 \left(
\frac{\phi_r}{r} - \phi^2 \right)^2.
\end{split}
\end{align}
We substitute in $\phi(r) = \frac{-\eta(r)}{r^2}$. Recall that $\phi_r = \del_r
\lbr \frac{\eta}{r^2}\rbr = \frac{-4(n-1)}{r^4} \left( r \eta_r - 2 \eta -
\eta^2 \right)$. Therefore we conclude that
\begin{align*}
|F|^2 & = \frac{2n(n-1)}{r^4} \left( 2 \eta + \eta^2 \right)^2 +
\frac{8(n-1)\left( 2 \eta + \eta^2 \right)}{r^4} \left( r \eta_r - 2 \eta -
\eta^2 \right) + \frac{4(n-1)}{r^4} \left( r \eta_r - 2 \eta - \eta^2 \right)^2
\\
&= \frac{2(n-1)}{r^4}\left( n(4 \eta^2 + 4 \eta^3 + \eta^4) + 4(2r \eta \eta_r +
r \eta^2 \eta_r - 4 \eta^2 - 4 \eta^3 - \eta^4) \right)\\
& \hsp + \frac{2(n-1)}{r^4} \left( 2 (r^2 \eta_r^2 - 4r \eta \eta_r -2 r \eta^2
\eta_r  + 4 \eta^2 + 4 \eta^3 + \eta^4) \right) \\
&= \frac{2(n-1)}{r^4} \left( \eta^2 \left( 4n -16 + 8 \right) + \eta^3 \left( 4n
-16+ 8 \right) + \eta^4 (n - 4 + 2) +(8 - 8)  r \eta \eta_r +2 r^2 \eta_r^2 +
(4-4) r \eta^2 \eta_r \right)\\
&= \frac{2(n-1)}{r^4} \left( 4\eta^2 \left( n -2 \right) + 4\eta^3 \left( n -2
\right) + \eta^4 (n - 2)  + 2 r^2 \eta_r^2 \right).
\end{align*}
Using the definition of $\eta$ from (\ref{gasteleta}) and incorporating the
corresponding identities computed in \eqref{eq:etaderiv} we have
\begin{align*}
|F|^2 &= \frac{2(n-1)}{r^4(a_n r^2 + b_n)^4} \left( 4 r^4 (a_n r^2 + b_n)^2
\left( n -2 \right) + 4 r^6 (a_n r^2 + b_n) \left( n -2 \right) + r^8 (n - 2)  +
8 r^4 b_n^2 \right)\\
&=  \frac{2(n-1)(n-2)}{(a_n r^2 + b_n)^4} \left( 4 (a_n^2 r^4 + a_n b_n r^2 + 
b_n^2) + 4 (a_n r^4 +  b_n r^2) + r^8 + \frac{8 r^4 b_n^2}{(n-2)}\right)\\
&=  \frac{8(n-1)(n-2)}{(a_n r^2 + b_n)^4}  \left[b_n^2 + r^2 \left( a_n b_n +
b_n \right)+ r^4 \left(a_n^2 + a_n + \frac{2 b_n^2}{(n-2)} \right) + \frac{1}{4}
r^8\right].
\end{align*}
Using this in the definition of $\FF_{0,1}$ and integrating numerically yields
the result.
\end{proof}
\end{prop}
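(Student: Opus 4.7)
The plan is to invoke Proposition \ref{CMLemma} to reduce the entropy to the single explicit quantity $\FF_{0,1}(\N_n)$, express the integrand as a rational function of the radius $r$, and then evaluate the resulting one-dimensional integral numerically.

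First I would observe that each Gastel soliton $\N_n$ has bounded curvature (clear from the explicit form of $\eta$, which is bounded together with $\eta_r$), and in particular polynomial energy growth. Combined with the fact that $\N_n$ is not cylindrical (which is easily checked from the $SO(n)$-equivariance of the ansatz; any constant vector field $V$ paired with the full-rank equivariant curvature yields a nontrivial $V \hook F_{\N_n}$), Proposition \ref{CMLemma} then guarantees that the entropy is uniquely realized at the basepoint $(0,1)$, giving
\[
\gl(\N_n) = \FF_{0,1}(\N_n) = (4\pi)^{-n/2} \int_{\mathbb{R}^n} |F_{\N_n}|^2 \, e^{-|x|^2/4}\, dV.
\]

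Next I would derive $|F_{\N_n}|^2$ as an explicit radial function. Starting from the coordinate expression for $F$ (as derived in \cite{Gastel} \S2.1) in terms of $\phi := -\eta/r^2$ and $\phi_r$, I would contract the indices using the explicit forms of the $SO(n)$-equivariant tensors $\zeta_{i\ga}^{\gb}$. Since $F$ splits naturally into an ``algebraic'' piece (with coefficient $2\phi + r^2\phi^2$) and a ``radial-derivative'' piece (with coefficient $\phi_r/r - \phi^2$), $|F|^2$ expands as a sum of four cross-contractions; $SO(n)$-invariance forces each of these to depend only on $r$. Performing the contractions yields a polynomial expression in $\eta$, $\eta_r$, and $n$, and plugging in $\eta(r) = r^2/(a_n r^2 + b_n)$ then produces $|F_{\N_n}|^2$ as an explicit rational function of $r$ whose coefficients involve $a_n$ and $b_n$ as given by \eqref{eq:andef}.

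Finally, passing to polar coordinates reduces the entropy to the one-dimensional integral
\[
\gl(\N_n) = (4\pi)^{-n/2}\,\omega_{n-1}\int_0^\infty |F_{\N_n}(r)|^2\, e^{-r^2/4}\, r^{n-1}\, dr,
\]
where $\omega_{n-1}$ denotes the volume of the unit $(n-1)$-sphere. The integrand is smooth, polynomially bounded, and Gaussian-decaying, so any reliable quadrature routine (adaptive Simpson, Gauss-Hermite, etc.) produces the tabulated numerical values for each $n \in \{5,\dots,9\}$ after substituting $a_n, b_n$ from \eqref{eq:andef}. The main obstacle in the whole argument is the algebraic contraction in the previous paragraph: bookkeeping four rank-four tensor cross-contractions against the flat metric, with terms like $\gd_k^\gb\gd_{\ga j}$ paired against $x_\ga x_j\gd_\ell^\gb$, is error-prone. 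Once this bookkeeping is done, however, the remainder is elementary calculus and numerical quadrature.
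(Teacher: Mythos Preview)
Your proposal is correct and follows essentially the same approach as the paper: reduce via Proposition \ref{CMLemma} to $\FF_{0,1}(\N_n)$, expand $|F|^2$ from the equivariant curvature formula as a sum of four tensor cross-contractions (the paper's $T_1,\dots,T_4$), substitute the explicit $\eta$ to obtain a radial rational function, and evaluate the resulting one-dimensional Gaussian-weighted integral numerically. Your explicit verification that $\N_n$ is non-cylindrical with polynomial energy growth, needed to invoke Proposition \ref{CMLemma}, is a detail the paper leaves implicit.
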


\bibliographystyle{hamsplain}
%%%%%%%%%%%%%%%%%%%%%%%%%%%

\end{document}